\documentclass[12pt,reqno]{amsart}

\usepackage[margin=1in]{geometry}
\usepackage[T1]{fontenc}
\IfFileExists{libertine.sty}{\usepackage[tt=false]{libertine}}{\usepackage{lmodern}}
\usepackage{amssymb,mathtools,microtype}
\IfFileExists{newpxmath.sty}{\usepackage[varbb]{newpxmath}}{}
\IfFileExists{mathabx.sty}{%
  \let\savedbigtimes\bigtimes
  \let\bigtimes\relax
  \usepackage{mathabx}
  \let\bigtimes\savedbigtimes
}{}
\usepackage{graphicx,enumerate,bm,verbatim}
\usepackage[dvipsnames]{xcolor}
\usepackage{hyperref}
\usepackage[capitalize,nameinlink]{cleveref}
\usepackage{appendix}
\hypersetup{
  colorlinks=true,
  pdfpagemode=UseNone,
  citecolor=OliveGreen,
  linkcolor=NavyBlue,
  urlcolor=black,
}

\theoremstyle{plain}
\newtheorem{theorem}{Theorem}[section]
\newtheorem{proposition}[theorem]{Proposition}

\newtheorem{lemma}[theorem]{Lemma}

\theoremstyle{definition}

\theoremstyle{remark}
\newtheorem{remark}[theorem]{Remark}

\crefname{lemma}{Lemma}{Lemmas}
\crefname{theorem}{Theorem}{Theorems}
\crefname{definition}{Definition}{Definitions}
\crefname{proposition}{Proposition}{Propositions}
\crefname{appsec}{Appendix}{Appendices}

\newcommand{\dif}{\,\mathrm{d}}
\newcommand{\E}{\mathbb{E}}
\newcommand{\Var}{\mathrm{Var}}
\newcommand{\Cov}{\mathrm{Cov}}

\newcommand{\norm}[1]{\left\lVert #1 \right\rVert}
\newcommand{\ip}[2]{\left\langle #1 , #2 \right\rangle}
\newcommand{\grad}{\nabla}
\newcommand{\hessian}{\nabla^2}

\newcommand{\diag}{\mathrm{diag}}
\newcommand{\sgn}{\mathrm{sgn}}
\newcommand{\eps}{\varepsilon}
\renewcommand{\epsilon}{\varepsilon}

\newcommand{\R}{\mathbb{R}}

\newcommand{\beq}{\begin{equation}}
\newcommand{\eeq}{\end{equation}}

\newcommand{\cE}{\mathcal E}

\newcommand{\we}{c}

\numberwithin{equation}{section}
\allowdisplaybreaks[2]
\title[A dimension-free bound for isoperimetry of unconditional log-concave measures]{A dimension-free bound for the isoperimetric constant\\ of unconditional log-concave measures}

\begin{document}
\author
[D. Mikulincer, I. Zadik]{Dan Mikulincer$^{\circ}$ and  Ilias Zadik$^{\dagger}$}

\thanks{\thanks{\raggedright$^\circ$Department of Mathematics, University of Washington;
$^\dagger$Department of Statistics and Data Science, Yale University.
Email: \texttt{danmiku@gmail.com, ilias.zadik@yale.edu}}}

\date{\today}

\begin{abstract}
We prove a dimension-free Poincar\'e inequality for isotropic
unconditional log-concave probability measures, establishing the
Kannan--Lov\'asz--Simonovits (KLS) conjecture in the unconditional setting.
The proof combines a new application of Dunkl operator theory combined with an appropriate transformation of the log-concave measure. Specifically, the proof follows by a direct spectral analysis of a new Dunkl–Langevin operator adapted to the transformed measure. The core ideas of the proof were generated by AI generative tools through interactions with the authors over several weeks. The authors refined and formalized the argument.
\end{abstract}

\maketitle

%\tableofcontents

\section{Introduction} For a probability measure $\mu$ on $\R^n$, we say it satisfies a Poincar\'e inequality, if there exists a constant $C>0$, such that for any locally Lipschitz test function $f:\R^n\to \R$,
\begin{equation} \label{eq:poincare}
    \mathrm{Var}_\mu(f) \leq C\E_{\mu}\left[|\nabla f|^2\right].
\end{equation}
When \eqref{eq:poincare} holds, we denote the best choice of the constant $C$ on the right-hand side by $C_{\mathrm{P}}(\mu)$, and call it the Poincar\'e constant of $\mu.$
Our focus in this work is on bounding the Poincar\'e constant of unconditional log-concave measures.

To give the necessary definitions, we call a function $V:\R^n\to \R\cup\{\infty\}$ unconditional if $V$ is invariant under the reflection of each coordinate. That is, for every $j \in[n]$, 
$$V(x_1,\dots, x_j,\dots,x_n) = V(x_1,\dots, -x_j,\dots,x_n).$$ Equivalently, for every $x \in \R^n$, $V(x_1,\dots,x_n) = V(|x_1|,\dots,|x_n|)$. Now, suppose that $\mu$ has a density of the form $e^{-V}$. We say that $\mu$ is unconditional if $V$ is an unconditional function. If $V$ is also convex, then we call $\mu$ log-concave. We also call $\mu$ isotropic if it is centered and its covariance is the identity, i.e.
$$\E_{\mu}[X] = 0 \quad \text{ and } \quad \E_\mu[X\otimes X] = \mathrm{I}.$$
With these definitions, our main result is the following.
\begin{theorem}\label{thm:main}
There exists a universal constant $C>0$, independent of the dimension $n$, such that for every isotropic, unconditional, and log-concave measure $\mu$ on $\R^n$,
\begin{equation}\label{eq:main}
 C_\mathrm{P}(\mu)\leq C.
\end{equation}
\end{theorem}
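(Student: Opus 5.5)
The plan is to follow the route indicated in the abstract: transform $\mu$, then carry out a spectral analysis of a Dunkl-type Langevin operator. Unconditionality means $\mu$ is invariant under the reflection group $W=\{\pm1\}^n$. Decompose the test function $f$ according to the characters of $W$,
\[
f=\sum_{S\subseteq[n]} f_S, \qquad f_S(x)=\Bigl(\prod_{i\in S}\sgn(x_i)\Bigr)g_S(|x_1|,\dots,|x_n|).
\]
These components are orthogonal in $L^2(\mu)$, and since $\mu$ is $W$-invariant the Dirichlet form also splits: $\E_\mu|\nabla f|^2=\sum_S \E_\mu|\nabla f_S|^2$. So it suffices to prove $\Var_\mu(f_S)\le C\,\E_\mu|\nabla f_S|^2$ for each $S$ separately.

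\textbf{Odd sectors ($S\neq\emptyset$).} Here $f_S$ is odd in $x_i$ for $i\in S$, so $f_S(x)=0$ on $\{x_i=0\}$. Conditioning on the other coordinates, the one-dimensional marginal in $x_i$ is symmetric and log-concave. A one-dimensional Hardy/Poincar\'e inequality for odd functions gives
\[
\E[f_S^2]\lesssim \E[x_i^2\,|\partial_i f_S|^2].
\]
This is not yet good enough, because the weight $x_i^2$ is unbounded. To handle this I would introduce Dunkl operators $T_i=\partial_i+k\,(1-s_i)/x_i$, where $s_i$ is the reflection in coordinate $i$, together with the associated Dunkl--Langevin operator $L_k=\sum_i T_i^2-\nabla V\cdot T$. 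On each sector these act as ordinary Langevin operators for the transformed measure $|x_S|^{2k}e^{-V}\,dx$ (the product of $|x_i|^{2k}$ over $i\in S$), restricted to the positive orthant. This transformed measure is still log-concave on the orthant, and it is more strongly convex near the walls. Using Bochner's formula for $L_k$ on the orthant, the Hessian of $V$ together with the term $k/x_i^2$ should give a curvature bound, and the Bakry--\'Emery argument then gives a spectral gap for $L_k$ of order one on each sector.

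\textbf{Even sector ($S=\emptyset$).} Here $f=g(|x|)$, and the problem becomes the Poincar\'e inequality for the restriction of $\mu$ to the orthant $\R_+^n$ (the orthant is convex, so this restriction is log-concave) with Neumann boundary conditions. For this I would use a change of variables $x_i=\phi(y_i)$ chosen to flatten the measure near the walls, together with the known fact that for unconditional isotropic convex bodies the coordinates are essentially negatively correlated in the sense of Klartag. Klartag's argument already gives a bound of order $\log^2 n$ here; the transformation is needed to remove the logarithm.

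\textbf{Main obstacle.} The hard step is the uniform curvature estimate: turning the order-one lower bound on the transformed Dunkl--Langevin operator into a Poincar\'e constant for $\mu$ itself, uniformly in $n$. Isotropy has to be used in an essential way, for example through $\E_\mu x_i^2=1$ to control the weight $x_i^2$ in the odd-sector Hardy inequality, combined with a Lipschitz comparison between $\mu$ and the transformed measure. I would first try to bound the Dunkl correction terms $k\,\E\bigl[(f(x)-f(s_ix))^2/x_i^2\bigr]$ by $\E|\nabla f|^2$ one coordinate at a time, using the one-dimensional log-concavity of the conditional marginals.
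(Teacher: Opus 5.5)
Your outline names the right objects (the $\{\pm1\}^n$ sector decomposition, Dunkl operators, a Lipschitz comparison), but the step that carries the theorem is missing, and the mechanisms you propose for it would fail.

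\textbf{Bakry--\'Emery does not give an order-one gap.} The Hessian $\nabla^2V$ may vanish, e.g.\ for the product of two-sided exponentials, $V=\sqrt2\sum_i|x_i|$. The curvature $2k/x_i^2$ contributed by $|x_i|^{2k}$ decays at infinity, so there is no uniform positive lower bound. Indeed $\prod_i|x_i|^{2k}e^{-V}$ on the orthant is just another log-concave measure, and an order-one gap for it is again a KLS-type problem that curvature alone does not deliver.

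\textbf{The gap does not transfer back to $\mu$.} Even granting such a gap, you give no way to carry it to $\mu$. Its density relative to $\mu$ is the unbounded weight $\prod_i|x_i|^{2k}$, so bounded-perturbation arguments are unavailable. If you instead realize the weight as the Jacobian of a coordinatewise map onto $\mu$ (the construction that makes the comparison automatic), the map has derivative of order $|y_i|^{2k}$ and is not Lipschitz. This is exactly the obstruction the paper points out for $q(r)=\sqrt r$.

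\textbf{The one-dimensional inequality is false.} The bound $\E[f_S^2]\lesssim\E[x_i^2(\partial_if_S)^2]$ fails: an odd function rising linearly from $-1$ to $1$ on $[-\epsilon,\epsilon]$ makes the left side close to $1$ and the right side $O(\epsilon)$. The usable direction is Hardy's, $\E[f^2/x_i^2]\lesssim\E[(\partial_if)^2]$ for $f$ odd in $x_i$. This is what the paper uses to get $\cE(f)\le9\E_\eta|\nabla f|^2$.

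\textbf{The even sector needs no separate Klartag-type analysis.} Your plan there is not yet an argument. A Bochner identity shows that an all-even eigenfunction cannot realize the gap. The $\sgn(x_i)f$ trick excludes eigenfunctions odd in two or more coordinates. So everything reduces to functions odd in exactly one coordinate.

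\textbf{What the paper supplies instead.} First, it keeps the weight bounded: $q_S(r)=(r/\sqrt{S^2+r^2})^{1/2}\le1$ is the Jacobian of the coordinatewise $1$-Lipschitz map $T_S$ with $(T_S)_\#\eta_S=\mu$, so $C_\mathrm{P}(\mu)\le C_\mathrm{P}(\eta_S)$. The price is a non-log-concave $\eta_S$ and variable Dunkl coefficients $c_j=S^2/(2(S^2+x_j^2))$ adapted to the weight.

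Second, the gap is bounded by the variance, not by curvature. Take the eigenfunction $f$ odd in $x_i$ and even in the other coordinates, with $x_if\ge0$ and $\E_\eta[\partial_if]\ge0$. The odd Bochner identity plus a weighted Poincar\'e inequality for $f/x_i$ give, when $\lambda\ge3/S^2$, $\lambda^3\ge\frac{\E_\eta[c_i]}{2\E_\eta[X_if]^2}\E_\eta[c_iW_if^2/X_i^3]$. Combined with $Ax_i=W_i$ (so $\lambda\E_\eta[X_if]=\E_\eta[W_if]$) and Cauchy--Schwarz, this yields $\lambda\gtrsim1/\E_\eta[X_i^2]$. This is where isotropy enters.

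Third, since that estimate needs $C_\mathrm{P}(\eta_S)\le S^2/3$ as input, the proof closes only through a dichotomy and a bootstrap in $S$. It uses $\norm{\Cov(\eta_S)}_{\mathrm{op}}\le14S^{2/3}$ and $C_\mathrm{P}(\eta_{S'})\le C_\mathrm{P}(\eta_S)\le\frac{S}{S'}C_\mathrm{P}(\eta_{S'})$ for $S'\le S$.

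What you call the ``main obstacle'' is precisely this content, so the proposal is a list of ingredients rather than a proof.
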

Let us make some remarks concerning Theorem \ref{thm:main}. For a measure $\mu$, define $\psi_\mu$, its isoperimetric Cheeger constant by
$$\frac{1}{\psi_\mu} = \inf_{A\subset\mathbb{R}^n}
\frac{\mu^+(\partial A)}
     {\min\{\mu(A),1-\mu(A)\}},$$
where the infimum is taken over all measurable $A$ with smooth boundary with $\mu(A) \in (0,1)$, and the boundary measure is defined by
$$\mu^+(\partial A) = \liminf\limits_{\eps\to 0}\frac{\mu\left(\{x\in \R^n| \mathrm{dist}(x,A)\leq \eps\}\right)-\mu(A)}{\eps}.$$
When $\mu$ is log-concave, it is known that $\psi_\mu^2\simeq C_{\mathrm{P}}(\mu)$, see \cite{milman2009role}. In fact, by \cite[Corollary 1.2]{deponti2021},
$$\frac{1}{4}
\leq \frac{\psi_\mu^2}{C_\mathrm{P}(\mu)}
\leq \pi.$$
Thus, Theorem \ref{thm:main} can also be stated as
$$\psi_\mu\leq C,$$
for every isotropic, unconditional, and log-concave measure $\mu$. The same statement, without the unconditionality assumption, is precisely the object of the celebrated Kannan---Lov\'asz---Simonovits (KLS) conjecture, which aims to understand the isoperimetric constant of convex sets and, by extension, of general log-concave measures. We refer the reader to \cite{lee2019kannan,alonso2020kannan} for further background on the conjecture and its relation to other fields.

On the topic of Theorem \ref{thm:main} we mention that for any unconditional isotropic log-concave measure $\mu$, in \cite{klartag2009berry}, Klartag gave a bound of the form $C_\mathrm{P}(\mu) = O(\log^2(n))$. Subsequently, \cite{barthe2020spectral} followed up with a more detailed investigation of Poincar\'e inequalities in the presence of symmetries. We also remark that Klartag's $\log^2(n)$ bound was later superseded by general bounds towards the KLS conjecture, see \cite{klartag2025isoperimetric} for a general overview of the involved methods.

We finally remark that we made no attempt to optimize the universal constant $C$ in Theorem~\ref{thm:main}, and a naive check gives $C$ approximately $10^{12}$. Yet, preliminary AI-assisted calculations suggest that the proof may be sharpened to obtain a significantly smaller constant $C,$ potentially $C \leq 8$. Note that an easy calculation for the isotropic product Laplace distribution requires $C \geq 2.$ We leave the optimization of the constant to future work.

\subsection{Ideas of the proof} \label{sec:ideas}
The main principle underlying the proof of Theorem \ref{thm:main} is to capitalize on the symmetries afforded by the unconditionality condition. To explain how to exploit these symmetries for a measure $\mu = e^{-V}dx$ we introduce its associated Langevin operator $L$, acting on functions $f:\R^n\to\R$ by
$$Lf = -\Delta f +\langle \nabla V, \nabla f\rangle.$$
When $\mu$ is log-concave, it is well-known that $L$ is a self-adjoint non-negative operator in $L^2(\mu)$ and that through integration by parts, for every two test functions $f,g$,
$$\E_\mu\left[fLg\right] = \E_\mu\left[\langle \nabla f,\nabla g\rangle \right] = \E_\mu\left[gLf\right],$$
see \cite{klartag2025isoperimetric}. Furthermore, it holds that $\mathrm{ker}(L) = \{\textnormal{constant functions}\}$, and so from the definition in \eqref{eq:poincare} we can see that
$$\frac{1}{C_{\mathrm{P}}(\mu)} = \inf\limits_{g:\mathrm{Var}_\mu(g)>0} \frac{\E_\mu\left[|\nabla g|^2\right]}{\mathrm{Var}_\mu(g)} =\inf\limits_{g:\mathrm{Var}_\mu(g)= 1} \E_\mu\left[gLg\right].$$
In other words, since $L$ is self-adjoint, $\frac{1}{C_{\mathrm{P}}(\mu)}$ is the spectral gap of $L$ and corresponds to the first non-trivial eigenvalue of $L$. Thus, if $f$ is a normalized eigenfunction of $L$ such that $\frac{1}{C_{\mathrm{P}}(\mu)} = \E_\mu\left[fLf\right],$ one can study the Poincar\'e constant through understanding the eigenfunction $f$. 

Our starting point is the work \cite{barthe2020spectral} who identified potential symmetries in the eigenfunction when the measure is unconditional. Specifically, they show that, when the measure is log-concave and satisfies some regularity assumptions, such an eigenfunction $f$ exists and that it can be chosen to be \emph{odd} in at least one coordinate. Moreover, their method also shows that one can choose $f$ to be \emph{even} in all other coordinates. The question is now how to use this information to establish bounds on $C_{\mathrm{P}}(\mu)?$ More precisely, how can we bound $\E_\mu\left[fLf\right]$,  from below, in the presence of symmetries? 

Our proof contains two main components towards answering the above question. The first component is to change the definition of $L$, and construct a new operator. The new operator will have extra terms, beyond the ones in $L$, which can be shown to be positive when a function $f$ has the symmetries uncovered in \cite{barthe2020spectral}. Let us explain some of the intuition behind the construction. For $j \in [n]$, consider $R_j$ the reflection operator so that
$$R_jf(x_1,\dots,x_j,\dots,x_n) = f(x_1,\dots,-x_j,\dots,x_n).$$
Using these reflections we define a new operator
\begin{equation} \label{eq:Aheuristic}
    Af(x) = Lf(x) + \frac{1}{2}\sum\limits_{j}\frac{1}{2x_j^2}(f-R_jf).
\end{equation}

We first note that if $f$ is even in coordinate $j$, then $(f-R_jf) = 0$, while if $f$ is odd there then, $(f-R_jf) = 2f$. Thus, for any function with the above described symmetries
\begin{equation}\label{eq:AtoL}
    \E_\mu[fAf] = \E_\mu[fLf] + \frac{1}{2}\E_{\mu}\left[\frac{f^2}{X^2_i}\right], 
\end{equation}
where $i\in[n]$ is the unique coordinate in which $f$ is odd. In essence, we have gained a strictly positive term to help with proving a lower bound. Of course, there are a few immediate obstacles to this strategy.
\begin{itemize}
    \item A lower bound on the right-hand side of \eqref{eq:AtoL} can, at best, establish a spectral gap for the operator $A$, not $L$. This is not very useful unless there is a bound in the reverse direction $\E_\mu[fAf] \lesssim \E_\mu[fLf].$
    \item More importantly, the operator $A$ is not well-behaved on $L^2(\mu)$. For example, if $f$ is linear, then $Af \notin L^2(\mu)$. Thus, the discussion about eigenfunctions with specific symmetries becomes somewhat difficult.
\end{itemize}
Addressing these two points is the second component in our proof. Before explaining this component, let us remark that, in light of \eqref{eq:AtoL} showing $\E_\mu[fAf] \lesssim \E_\mu[fLf]$, at least for our desired eigenfunction, reduces to an inequality of the form
\begin{equation} \label{eq:hardy}
    \E_{\mu}\left[\frac{f^2}{X_i^2}\right] \lesssim \E_{\mu}\left[|\nabla f|^2\right].
\end{equation}
Inequalities such as \eqref{eq:hardy} are known in the literature as Hardy-type inequalities \cite{muchenhoupt1972hardy}. Most relevant to our problem, the Hardy-type inequalities are known to hold in the presence of a weight that vanishes near $\{x_j = 0\},$ which leaves room to change the measure $\mu$ and adapt it to our needs.

\IfFileExists{eta_s_geometry_updated.pdf}{%
\begin{figure}[tbp]
  \centering
  \includegraphics[width=\linewidth]{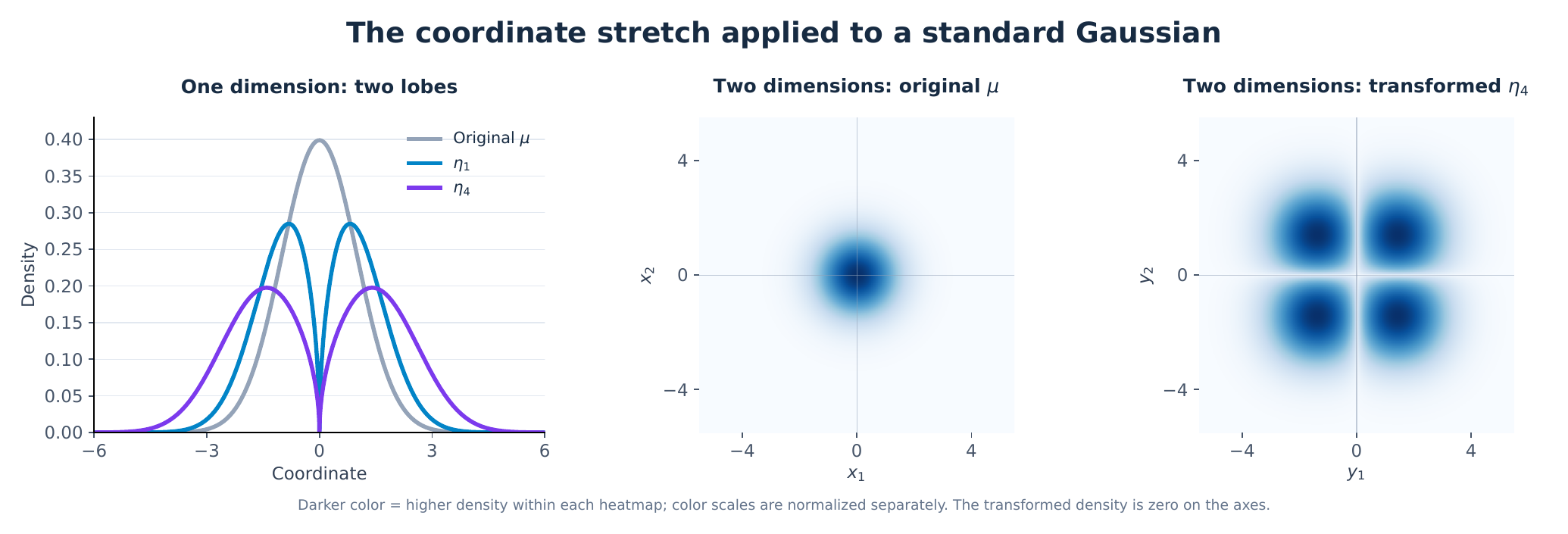}
  \caption{The $\eta$ transform for the standard Gaussian measure $\mu$ in one and two dimensions.}
  \label{fig:eta-s-geometry}
\end{figure}
}{}

Following on the above explanation, to leverage the Hardy inequality, we reweigh (or ``stretch") the measure $\mu$ by a weight of the form $\prod_j q(|x_j|)$, for a carefully chosen $q:\R_{\geq 0}\to\R_{\geq 0}$, and obtain a new measure $\eta$ (see also Figure \ref{fig:eta-s-geometry}).\footnote{In the proof, $\eta$ is tuned by an appropriate dimension-free parameter $S>0$. Here, we denote $\eta=\eta_S$ for simplicity.} The measure $\eta$ satisfies $4$ important properties:
\begin{enumerate}
    \item If $L$ is the Langevin operator associated to $\eta$, there is a slight modification of the operator $A$ from \eqref{eq:Aheuristic} so that a similar estimate to \eqref{eq:AtoL} holds.
    \item The operator $A$ is a non-negative self-adjoint operator on $L^2(\eta)$, with a discrete spectrum such that
\begin{equation}\label{eq:spectral}
    \inf\limits_{g:\mathrm{Var}_\eta(g)>0} \frac{\E_\eta\left[gAg\right]}{\mathrm{Var}_\eta(g)} > 0.
\end{equation}
   \item It holds that $\E_\eta\left[gAg\right] \simeq \E_\eta\left[gLg\right]$ for every test function $g$.
   \item The measure $\eta$ is close, in an appropriate sense, to $\mu$ so that $C_{\mathrm{P}}(\mu)\leq C_{\mathrm{P}}(\eta)$.
\end{enumerate}
With these $4$ properties, we choose $f$ to be an eigenfunction of $A$, achieving the infimum in \eqref{eq:spectral} and satisfying the symmetries above. We then exploit the additional terms in \eqref{eq:AtoL}, and use integration by parts, to bound $E_{\eta}[fAf]$ from below. The third property then implies a bound on $ C_{\mathrm{P}}(\eta)$, and the fourth property transfers it to  $C_{\mathrm{P}}(\mu)$. It is a curious fact of the proof that $\eta$ is not, and cannot, be log-concave, yet we use it to establish a Poincar\'e inequality for a log-concave measure. The situation is usually reversed in many cases.

We give the precise details of our construction in Section \ref{sec:main-proof}. Here, let us demonstrate one example of a weight $q$ which appears in the literature and satisfies the first $3$ properties we listed above. For this example we choose $q(r) = \sqrt{r}$, so that if $\mu = e^{-V}dx$ we would define $\eta$ to have a density proportional to $\sqrt{\prod_j |x_j|}e^{-V}.$ In this case, since we need to add $-\nabla \log q$ to the definition of $L$, \eqref{eq:Aheuristic} becomes
$$Af = -\Delta f + \langle \nabla V, \nabla f\rangle - \frac{1}{2}\sum_j \frac{1}{x_j}\partial_j f + \frac{1}{2}\sum\limits_{j}\frac{1}{2x_j^2}(f-R_jf).$$
Concretely, note that $Ax_i = \partial_i V$, and so at least when $f$ is linear we can expect $Af \in L^{2}(\mu)$, perhaps after imposing some regularity assumptions on $V$.
The last two terms together, $Tf := - \frac{1}{2}\sum_j \frac{1}{x_j}\partial_j f + \frac{1}{2}\sum\limits_{j}\frac{1}{2x_j^2}(f-R_jf),$ appear in so-called Dunkl theory, and $\Delta - T$ is also called the Dunkl Laplacian. Such operators were first introduced by Dunkl in \cite{Dunkl1989} to study spherical harmonics associated
with finite reflection groups. See also \cite{rosler2003dunkl} for the key role of Dunkl operators in the explicit solvability of quantum integrable systems. The models considered there possess symmetries associated with finite reflection groups, such as particle-exchange symmetry, which are naturally encoded in the Dunkl operators. By now, there is a rich literature on the spectral properties of these operators \cite{rosler2003dunkl}, as well as on related functional inequalities \cite{velicu2022logarithmic}. In our case, the natural symmetry group of an unconditional measure is $\mathbb{Z}_2^n$, for which the orthants serve as Weyl chambers. Importantly, the weight $\sqrt{\prod_j |x_j|}$ vanishes on the boundaries of the Weyl chambers, yet the vanishing is slow enough so as allow for the Hardy inequality to hold which can lead to a possible spectral gap, as in the third property.
The problem for our case is that the weight $\sqrt{\prod_j |x_j|}$ blows up at infinity, which obstructs the fourth property we need. For our proof, we will modify the weight to keep it bounded, which is the missing piece. This modification will require some non-trivial changes to the operator $A$ to ensure it remains self-adjoint.

\subsection{Comparison with the $O(\log^2 (n))$ bound from~\cite{klartag2009berry}}
Klartag in ~\cite{klartag2009berry} also uses a spectral argument which exploits the unconditional symmetries to obtain the $O(\log^2 n)$ bound. Specifically, Bochner-type identities for the Neumann Laplacian show that an eigenfunction corresponding to its first positive eigenvalue can be chosen odd in one coordinate.  Applying a one-dimensional Poincar\'e inequality on coordinate slices then yields
\[
C_\mathrm{P}(\operatorname{Unif}(K))\le CR^2
\]
for every unconditional convex body $K\subseteq[-R,R]^n$ and any $R>0$. For isotropic unconditional convex bodies, one can truncate at $R=O(\log(n))$ to obtain
\[
C_\mathrm{P}(\operatorname{Unif}(K))=O(\log^2(n));
\]
see \cite[Corollaries~2--3 and Equation~(22)]{klartag2009berry}.

As mentioned above, our argument likewise carefully exploits the coordinate parities of an eigenfunction corresponding to the spectral gap. A principal difference is though, the measure and operator being analyzed: we first stretch the measure coordinatewise and then study a Dunkl--Langevin operator adapted to the new measure.

\subsection{Statement on Artificial Intelligence (AI) use}
The core ideas underlying the proofs presented here were generated by AI generative models. The authors initially spent approximately three weeks exploring an information theoretic approach to proving Theorem~\ref{thm:main} in collaboration with the models. Around then, the model independently proposed replacing several key steps with a new argument based on Dunkl theory and specifically the Dunkl--Langevin operator. Over the following four days of intensive but relatively high-level interactions, the model developed a complete proof sketch that differed substantially from the approach explored during the first three weeks. Given the importance of the result and the (to the best of the authors' knowledge) novelty of the method, the authors decided to examine the sketch in detail and digest the proof. As the sketch was difficult to understand, the authors spent several additional weeks working through its details, verifying and refining the argument. This led to a simplified and more accessible proof which we present here. The authors take full responsibility for the content and accuracy of this work.

\subsection{Organization}
In Section~\ref{sec:main-proof}, we deduce Theorem~\ref{thm:main}
from a key auxiliary result Theorem~\ref{thm:bounded} under additional regularity assumptions.
Section~\ref{sec:auxiliary-proof} proves Theorem~\ref{thm:bounded}.
Appendix~\ref{sec:analyticdetails} supplies a formal treatment of the Dunkl operators we use.

\section{Proof of Theorem~\ref{thm:main}}\label{sec:main-proof}

\subsection{The transformation of $\mu$} We start with introducing a transformation of $\mu.$ This transformation plays the role of the reweighting mechanism we mentioned in Section \ref{sec:ideas} and constitutes one of the main components of the proof. 

Fix $S > 0$ and define the following functions
\begin{equation}\label{eq:coordinatechange}
 q_S(r)=\left(\frac r{\sqrt{S^2+r^2}}\right)^{1/2}, r>0, \qquad
 g_S(r)=\int_0^r q_S(t)\dif t,\qquad
 (T_S(y))_j=\sgn(y_j)g_S(|y_j|),
\end{equation}
where $r>0$, $y \in \R^n$, and $j \in[n]$. We think about the map $T_S:\R^n\to \R^n$ as a transport map whose image is $\mu$. We then consider its source which we denote as 
$$\eta_S:=(T_S^{-1})_\#\mu.$$
By a change of variables we see that $\eta_S$ has the density (see also Figure \ref{fig:eta-s-geometry}),
\begin{equation}\label{eq:pullbackdensity}
 \frac{d\eta_S}{dy}(y)\propto e^{-W_S(y)}\prod_{j=1}^nq_S(|y_j|),
 \qquad W_S:=V\circ T_S.
\end{equation}
We remark that since $0<q_S(r)<1$ when $r>0$ and since
$\lim_{r \to \infty} q_S(r)=1$, the map $T_S$ is indeed invertible. Moreover,
\begin{equation}\label{eq:Tlip}
 \norm{DT_S(y)}_{\mathrm{op}}
 =\max_{j\in[n]}g_S'(|y_j|)
 =\max_{j\in[n]}q_S(|y_j|)\le1.
\end{equation}
Thus $T_S$ is also $1$-Lipschitz for every $S>0$.

\subsection{A dichotomy result for $C_{\mathrm{P}}(\eta_S)$} Because the map $T_S$, is  $1$-Lipschitz, to prove Theorem \ref{thm:main} it will suffice to prove that for some universal constant $C>0$, and for some $S>0$, $C_P(\eta_S) \leq C $ (see Lemma \ref{lem:Liptrans} below for a proof of this elementary fact). The following result is our main auxiliary result pertaining to the measure $\eta_S$. It is a dichotomy result which allows us to bound $C_P(\eta_S)$ under certain conditions.

\begin{theorem}\label{thm:bounded}
Fix a scale parameter $S>0$, and let $W\in C^2(\R^n)$ be
unconditional and satisfy
\begin{equation}\label{eq:operator-assumptions}
 0\preceq\hessian W\preceq KI,\qquad
 x\cdot\grad W(x)\ge\kappa_0|x|^2-B_0
 \quad(x\in\R^n),
\end{equation}
for some $0<K<\infty$, $0\le B_0<\infty$, and $\kappa_0>0$.
Define the probability measure $\eta_S$ by
\begin{equation*}
 \frac{d\eta_S}{dx}(x)\propto e^{-W(x)}
\prod_{i=1}^n q_S(|x_i|),
\end{equation*}
with $q_S$ as in \eqref{eq:qchoices}.
%and set $a:=\norm{\Cov(\eta)}_{\mathrm{op}}$.
Then $C_{\mathrm{P}}(\eta_S) < \infty$, and there exists a universal constant $c_*>0$ such that if $S^2 \geq \norm{\Cov(\eta_S)}_{\mathrm{op}}$, then at least one of the following conditions must hold:
\begin{itemize}
    \item Either $ C_\mathrm{P}(\eta_S)>\frac{S^2}{3}$;
    \item or $C_\mathrm{P}(\eta_S)\leq 9\frac{\norm{\Cov(\eta_S)}_{\mathrm{op}}}{c_*}$.
\end{itemize}
%If $S^2\ge a$, then it holds either \[    C_P(\eta)>S^2/3,\] or  \[C_P(\eta)\le9a/c_*. \]
\end{theorem}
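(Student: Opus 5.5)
The plan follows the outline in Section~\ref{sec:ideas}, with $\eta:=\eta_S$ and $W_q:=W-\sum_j\log q_S(|x_j|)$. Let $L=-\Delta+\langle\nabla W_q,\nabla\cdot\rangle$ be the Langevin operator of $\eta$. I will introduce the Dunkl--Langevin operator
\[
Af=Lf+\sum_j \frac{\we_j(x_j)}{x_j^2}\,(f-R_jf),
\]
where the reflection coefficient $\we_j$ is chosen so that $A$ is symmetric on $L^2(\eta)$ and the odd part of the generator behaves like the Dunkl Laplacian. Concretely, $\we_j(r)\approx \tfrac14$ for $r\ll S$, matching the $\sqrt{r}$ example, and $\we_j(r)$ decays as $q_S\to1$ for $r\gg S$.

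\emph{Step 1: finiteness and discrete spectrum.} The assumptions $0\preceq\hessian W\preceq KI$ and $x\cdot\grad W\ge\kappa_0|x|^2-B_0$ make $W$ grow quadratically. The weight $\prod_j q_S$ is bounded by $1$ and vanishes like $|x_j|^{1/2}$ near the coordinate hyperplanes. Together these give that the ground-state transform of $A$ has a confining potential. Hence $A$ (Friedrichs extension) has compact resolvent, $\ker A$ consists of constants, and $C_{\mathrm P}(\eta)<\infty$. This last point also follows from a Lyapunov-function argument using $x\cdot\nabla W_q\ge \kappa_0|x|^2-B_0-n/2$ together with local Poincar\'e on balls; I would defer these details to Appendix~\ref{sec:analyticdetails}.

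\emph{Step 2: symmetric eigenfunction and comparison.} Since $A$ commutes with each $R_j$, $L^2(\eta)$ splits into parity sectors. Linear functions $x_i$ lie in the sector odd in $i$ and even in all other coordinates, so $A$ has a first nontrivial eigenfunction $f$ odd in exactly one coordinate $i$ and even in the rest (the \cite{barthe2020spectral}-type statement). The argument is a Bochner identity: the derivative $\partial_i f$ of an eigenfunction that is even in all coordinates is odd in $i$ and has Rayleigh quotient no larger. For $f$ in the fully even sector we then get $\E_\eta[fAf]=\E_\eta[fLf]$. For $f$ in the sector odd in $i$ we get
\[
\E_\eta[fAf]=\E_\eta[|\nabla f|^2]+2\E_\eta[\we_i f^2/x_i^2].
\]
A one-dimensional weighted Hardy inequality for $q_S(|x_i|)\,dx_i$ (Muckenhoupt's criterion, valid because the weight vanishes like $|x_i|^{1/2}$) gives $\E[\we_i f^2/x_i^2]\lesssim\E[|\partial_i f|^2]$, so $\E_\eta[fAf]\simeq\E_\eta[fLf]$, which is property~(3). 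It follows that $C_{\mathrm P}(\eta)\simeq 1/\lambda_1(A)$.

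\emph{Step 3: the dichotomy.} Assume $C_{\mathrm P}(\eta)\le S^2/3$; we must show $C_{\mathrm P}(\eta)\le 9\|\Cov\|/c_*$. Take $f$ odd in $x_i$ with $Af=\lambda f$ and $\E_\eta f^2=1$. Write $f=x_i h$ with $h$ even and integrate by parts against $x_i$. Testing the eigen-equation against $x_i$ gives $\lambda\E[fx_i]=\E[fAx_i]$, where $Ax_i=\partial_iW-q_S'/q_S+2\we_i/x_i$ is of the form $\partial_i W+O(1/S)$ for $|x_i|\gtrsim S$, with the singular parts cancelling for $|x_i|\ll S$ by the choice of $\we_i$. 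The Poincar\'e bound at scale $S^2/3$ forces $f$ to have non-negligible correlation with $x_i$, i.e.\ $\E[f x_i]^2\gtrsim \Var(x_i)^{-1}\cdots$. Intuitively, a function with Poincar\'e energy much below $S^{-2}$ cannot oscillate at the scale where the Hardy term $f^2/x_i^2$ is active, so it must be close to linear in $x_i$. Combining this with the Hardy gain $\E[f^2/x_i^2]\gtrsim S^{-2}\E f^2$ and with $\E[x_i^2]\le\|\Cov\|\le S^2$ yields $\lambda\ge c_*/(9\|\Cov\|)$.

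\textbf{Main obstacle.} Step 3 is where I expect the difficulty: turning the positive Hardy/reflection term plus the hypothesis $C_{\mathrm P}\le S^2/3$ into a lower bound of order $1/\|\Cov\|$. To handle it I would first try a contradiction argument. Split $f$ into its projection onto $x_i$ and the remainder. The remainder is controlled by the gap via the second eigenvalue in the same sector, since odd functions orthogonal to $x_i$ have a larger Rayleigh quotient by a Bochner/Brascamp--Lieb type estimate that uses $\hessian W\succeq0$. The projection is controlled by $\E[x_iAx_i]=\E[1+2\we_i]\gtrsim 1$ against $\E x_i^2\le\|\Cov\|$.
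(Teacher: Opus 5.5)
\textbf{Verdict: there is a genuine gap in Step~3.} Steps 1 and 2 essentially match the paper's setup. Your Muckenhoupt argument can replace the integration-by-parts proof of Lemma~\ref{lem:diriforms}, because $e^{-W}$ is nonincreasing in $|x_i|$. Two points are missing there, though. First, you never exclude eigenfunctions odd in two or more coordinates; the paper replaces $f$ by $\mathrm{sgn}(x_i)f$, which keeps the variance but drops the positive term $\E_\eta[c_if^2/x_i^2]$ from $\cE$. Second, the reflection coefficient must be exactly $\tfrac12\,rq_S'(r)/q_S(r)=\frac{S^2}{4(S^2+r^2)}$ in your normalization. Only then does $Ax_i=\partial_iW$ hold exactly rather than up to $O(1/S)$ errors, and that exact identity is what makes testing against $x_i$ usable.

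The real gap is Step~3, which is the whole content of the dichotomy. ``Close to linear'' is a heuristic, and your fallback does not work. With only $\hessian W\succeq0$, no Brascamp--Lieb or Bochner estimate gives a quantitative lower bound on the Rayleigh quotient of odd functions orthogonal to $x_i$. A bound of order $1/\norm{\Cov(\eta)}_{\mathrm{op}}$ there is essentially the theorem itself. Even granting it, the form does not split along $\mathrm{span}(x_i)\oplus x_i^{\perp}$: the cross term $\cE(x_i,g)=\E_\eta[g\,\partial_iW]$ is nonzero for general $g\perp x_i$. So separate lower bounds on the two pieces do not combine into a lower bound on $\lambda$. Your ``Hardy gain of size $S^{-2}$'' is also unproved, and you give no mechanism linking it to $\E_\eta[x_i^2]\le\norm{\Cov(\eta)}_{\mathrm{op}}$.

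The missing idea is the cubic estimate \eqref{eq:Jupper}, $\E_\eta[c_iW_if^2/X_i^3]\le 2\lambda^3\E_\eta[X_if]^2/\E_\eta[c_i]$. The paper builds it from four ingredients:
\begin{enumerate}
\item The odd-sector Bochner identity \eqref{eq:oddidentity} for $f=x_ig$ with $g$ unconditional. It exposes the extra terms $\E_\eta[c_i|\nabla g|^2]$ and $\E_\eta[c_iW_ig^2/x_i]$.
\item The improved bound $\lambda\E_\eta[(\partial_jf)^2]\le\cE(\partial_jf)-\E_\eta[c_i(\partial_jg)^2]$ for $j\neq i$, obtained by testing with $\mathrm{sgn}(x_i)\partial_jf$.
\item The a priori weighted gap of Lemma~\ref{lem:weighted}, applied to $g=f/x_i$. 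Its defect $\frac{3}{2S^2}$ is absorbed exactly because $C_{\mathrm P}(\eta)\le S^2/3$ gives $\lambda\ge 3/S^2$; this is the real role of the hypothesis.
\item The sign information $x_if\ge0$ and $\E_\eta[\partial_if]\ge0$, the latter via monotonicity of $c(r)/r^2$. Then both summands in \eqref{eq:forfinal} are nonnegative, so $\E_\eta[\partial_if]^2+\E_\eta[c_if/X_i]^2\le\lambda^2\E_\eta[X_if]^2$.
\end{enumerate}

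Only after this does your opening move pay off. From $\lambda\E_\eta[X_if]=\E_\eta[W_if]$, Cauchy--Schwarz with the pair $c_iW_if^2/X_i^3$ and $W_iX_i^3/c_i$ gives $\lambda\ge\E_\eta[c_i]/(2\E_\eta[W_iX_i^3/c_i])$. Integration by parts, the Poincar\'e inequality for $x_i^2$, and $C_{\mathrm P}(\eta)\le S^2/3$ then bound the denominator by $35\E_\eta[X_i^2]$. Jensen bounds the numerator below by $3/8$.
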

Our main use of Theorem \ref{thm:bounded} will be in choosing $S$ in a way which forces $S^2\geq \norm{\Cov(\eta_S)}_{\mathrm{op}}$ while ruling out the possibility that $C_\mathrm{P}(\eta_S)>\frac{S^2}{3}$. In the course of showing the first condition we will also show that $\norm{\Cov(\eta_S)}_{\mathrm{op}}$ cannot be too large.

We defer the proof of Theorem \ref{thm:bounded} to Section~\ref{sec:auxiliary-proof}, and for now focus on using it to prove Theorem \ref{thm:main}.

\begin{remark}
    The use of a dichotomy to obtain a spectral gap bound in Theorem~\ref{thm:bounded} is closely related to multiple bootstrap arguments for spectral gaps in the literature. For example, Hastings~\cite{Hastings2019} studies how a gap between eigenvalues behaves in a perturbation $H_s$ of a self-adjoint matrix $H_0.$ Specifically, a key step towards proving that the spectral gap of $H_s$ remains large for all $s,$ is that he considers a path \(H_s=H_0+sV, s \in [0,1]\) and shows, under suitable perturbative assumptions, that the bound \(\Delta(s)\ge 1/2\) throughout the path improves to \(\Delta(s)\ge 3/4\), where \(\Delta(s)\) denotes the spectral gap of $H_s$. Related bootstrap mechanisms appear in the study of spectral gap bounds for conditioned spin systems of Adhikari--Brennecke--Xu--Yau~\cite{ABXY2024} and in the stability proof of spectral gaps for infinite quantum spin systems of Nachtergaele--Sims--Young~\cite{NSY2024}. 
\end{remark}

%We now show how Theorem \ref{thm:bounded} implies a Poincar\'e inequality for the transformed measure $\eta_S$ and use a simple transportation argument to obtain a similar bound for $\mu$ and complete the proof of Theorem \ref{thm:main}.

\subsection{Auxiliary results}
To apply Theorem \ref{thm:bounded} we need to verify that $\eta_S$ meets the requirements in the statement. First, it is clear that the weight
$$\prod_{j=1}^nq_S(|y_j|),$$
from \eqref{eq:pullbackdensity} is precisely the one appearing in Theorem \ref{thm:bounded}. Thus we will need to show that $W_S$ satisfies the assumptions in \eqref{eq:operator-assumptions}, for which we require a further, but temporary, assumption.

\textbf{An extra assumption:} To apply Theorem \ref{thm:bounded} we first impose an extra regularity condition on $V$. Later in the proof of Theorem \ref{thm:main} we will show how to approximate $V$ by regular enough functions and remove this assumption. Specifically, below we will assume that there exist constants $0<\kappa\leq K<\infty$ such that
\begin{equation}\label{eq:regularoriginal}
 V\in C^\infty,\quad 
 \kappa \mathrm{I}\preceq\hessian V\preceq K\mathrm{I}, \text{ and }
 \max_j\E_\mu[X_j^2]\leq2.
\end{equation}
We emphasize that the bound on $C_\mathrm{P}(\mu)$ we will obtain under \eqref{eq:regularoriginal} is independent of the constants $\kappa$ and $K$, which is the key fact that will allow us to approximate an arbitrary convex and unconditional $V$ with ones satisfying the regularity assumptions and complete the proof of Theorem \ref{thm:main} without them. The assumption on the second moment is for convenience and will allow us to handle, in the approximation step, measures which are not necessarily isotropic.

Under \eqref{eq:regularoriginal}, we verify the hypotheses of Theorem~\ref{thm:bounded} for $\eta_S$ and bound its covariance.
\begin{lemma}\label{lem:pullbackregularity}
Suppose $V$ satisfies \eqref{eq:regularoriginal}. Then, $W_S$ is unconditional and
satisfies \eqref{eq:operator-assumptions}. More precisely,
\begin{equation}\label{eq:pullbackhessianbounds}
 \kappa\diag\bigl(q_S(|y_j|)^2\bigr)
 \preceq\hessian W_S(y)\preceq\frac{3K}{2}I.
\end{equation}
Moreover, for every $S\ge1$,
\begin{equation}\label{eq:pullbackmoments}
 \norm{\Cov(\eta_S)}_{\mathrm{op}}\le14S^{2/3}.
\end{equation}
\end{lemma}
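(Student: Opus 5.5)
Unconditionality of $W_S=V\circ T_S$ is immediate: $T_S$ commutes with every reflection $R_j$, since $(T_S)_j(y)=\sgn(y_j)g_S(|y_j|)$ depends only on $y_j$ and is odd in it, and $V$ is unconditional. For the Hessian, note that $T_S$ acts coordinatewise with $(T_S)_j'(y_j)=q_S(|y_j|)$ and $(T_S)_j''(y_j)=\sgn(y_j)q_S'(|y_j|)$. The chain rule then gives
\[
\hessian W_S(y)=D\,\hessian V(T_S y)\,D+\diag\bigl(\partial_jV(T_Sy)\,\sgn(y_j)q_S'(|y_j|)\bigr),\qquad D=\diag(q_S(|y_j|)).
\]
Smoothness at $y_j=0$ needs separate care, since $q_S(r)\sim (r/S)^{1/2}$ and $g_S$ is only $C^1$ there. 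I would handle this by noting $\partial_jV(x)=0$ at $x_j=0$ by unconditionality, so $\partial_jV(x)\approx \partial_{jj}V\cdot x_j$ with $x_j=g_S(|y_j|)\sim r^{3/2}$, while $q_S'(r)\sim r^{-1/2}$. The diagonal term therefore behaves like $r$ and is continuous. If full $C^2$ regularity fails at the hyperplanes, I would note that Theorem~\ref{thm:bounded} only needs the stated bounds, and arrange this in the approximation step.

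The key sign observation is that $\partial_jV(x)\,\sgn(x_j)\ge 0$. This holds because $V$ is convex and even in $x_j$, so $t\mapsto V(\dots,t,\dots)$ is even and convex, hence monotone on $t\ge 0$. Together with $q_S'\ge0$ and $\sgn(x_j)=\sgn(y_j)$, the diagonal correction is therefore nonnegative. The first term is $\succeq \kappa D^2$, which gives the lower bound in \eqref{eq:pullbackhessianbounds}.

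For the upper bound, the first term is $\preceq K D^2\preceq KI$. For the diagonal term, use $|\partial_jV(x)|\le K|x_j|$: this follows from $\partial_jV=0$ on $\{x_j=0\}$ and the mean value theorem along the $j$-th coordinate, since $\partial_{jj}V\le K$. It then remains to show the scalar inequality $g_S(r)q_S'(r)\le \tfrac12$. Scaling $r=Su$ removes $S$, and with $q(u)=(u^2/(1+u^2))^{1/4}$ one has $q'(u)=\tfrac12 q(u)\,u^{-1}(1+u^2)^{-1}$. Since $q$ is increasing, $g(u)\le u\,q(u)$, so
\[
g q'\le \tfrac12 q^2/(1+u^2)=\tfrac12\frac{u}{(1+u^2)^{3/2}}\le\tfrac12.
\]
This yields the bound $\tfrac32 K$.

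The coercivity condition $x\cdot\grad W_S\ge\kappa_0|x|^2-B_0$ comes from $y\cdot\grad W_S(y)=\sum_j y_jq_S(|y_j|)\partial_jV(T_Sy)$. By the sign property each summand is nonnegative. Moreover, strong convexity together with $\partial_jV=0$ on $\{x_j=0\}$ gives $x_j\partial_jV(x)\ge\kappa x_j^2$. Writing $x_j=T_S(y)_j$, this yields the termwise bound $y_jq_S\,\partial_jV\ge \kappa\, q_S\, x_j^2\,|y_j|/|x_j|\ge \kappa\, q_S\, x_j^2$, using $|x_j|\le|y_j|$ by \eqref{eq:Tlip}. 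For $|y_j|\ge S$ we have $q_S\ge 2^{-1/4}$ and $|x_j|\ge |y_j|-cS$, so the sum is at least $c\kappa|y|^2-B_0$ with $B_0$ depending on $n,S,\kappa$; dependence on these is allowed.

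The main obstacle is the covariance bound \eqref{eq:pullbackmoments}. By unconditionality $\Cov(\eta_S)$ is diagonal, so it suffices to bound $\E_{\eta_S}[Y_j^2]=\E_\mu[h(|X_j|)^2]$ with $h=g_S^{-1}$. Since $q_S\ge$ const for $r\ge S$, we have $g_S(r)\ge c(r-S)$, and for small $r$ we have $g_S(r)\approx \tfrac23 r^{3/2}S^{-1/2}$. Hence $h(s)\lesssim s+S$ for large $s$ and $h(s)\approx (3s/2)^{2/3}S^{1/3}$ for small $s$. The naive bound $h(s)^2\le C(s^2+S^2)$ gives order $S^2$, which is too weak. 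Instead I would use the sharper pointwise bound $h(s)^2\le C\,(s^2+s^{4/3}S^{2/3})$, valid for all $s$, which should follow from $g_S(r)\ge \min(\tfrac23 r^{3/2}S^{-1/2},\,c r)$-type estimates. Taking expectations with Jensen, $\E|X_j|^{4/3}\le 2^{2/3}$, and $\E X_j^2\le 2$ then gives $\E Y_j^2\lesssim 2+S^{2/3}\le 14S^{2/3}$ for $S\ge1$, after tracking constants.
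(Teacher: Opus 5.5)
Your proposal is correct in substance and follows the paper's proof essentially step for step: the chain-rule formula together with the sign of $\partial_jV(x)\sgn(x_j)$ and the bound $g_Sq_S'\le\frac12q_S^2$ give the Hessian bounds, and the pointwise inequality $r^2\lesssim g_S(r)^2+S^{2/3}g_S(r)^{4/3}$ with Jensen gives the covariance bound. The coercivity step is a harmless variant, and your extra check of $C^2$ regularity on $\{y_j=0\}$, which the paper omits, is correct. The one loose end is the explicit constant $14$, and the estimate you suggest for it points the wrong way, since $q_S(u)\le(u/S)^{1/2}$ forces $g_S(r)<\frac23r^{3/2}S^{-1/2}$. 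Instead, use $q_S(u)\ge2^{-1/4}(u/S)^{1/2}$ on $[0,S]$, which gives $g_S(r)\ge\frac{2^{3/4}}{3}r^{3/2}S^{-1/2}$ and so $r^2\le\frac{3^{4/3}}{2}S^{2/3}g_S(r)^{4/3}$ there, together with $g_S(r)\ge r/2$ for $r\ge S$. This yields $r^2\le4g_S(r)^2+3S^{2/3}g_S(r)^{4/3}$, hence $\E_{\eta_S}[Y_j^2]\le8+3\cdot2^{2/3}S^{2/3}\le14S^{2/3}$ for $S\ge1$.
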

\begin{proof}
First, as $V$ is unconditional and $T_S$ is diagonal it is immediate that $W_S=V\circ T_S$ is unconditional as well. Given this, we will begin by establishing \eqref{eq:pullbackhessianbounds} and then explain how to derive the condition in \eqref{eq:operator-assumptions} from it. 
Differentiating $W_S=V\circ T_S$ gives
$$
\partial_jW_S(y)=\partial_jV(T_S(y))q_S(|y_j|),
$$
and so,
\begin{equation}\label{eq:hessianexp}
    \nabla^2W_S(y)
=DT_S(y)\nabla^2V(T_S(y))DT_S(y)
+\operatorname{diag}\bigl(
\partial_jV(T_S(y))\operatorname{sgn}(y_j)q_S'(|y_j|)
\bigr).
\end{equation}
To obtain a lower bound, fix some $j\in [n]$. Since $V$ is even in the $j^{th}$ coordinate, \eqref{eq:regularoriginal} gives
\begin{equation} \label{eq:Vbound}
    \frac{\partial_jV(x)}{x_j}
=\int_0^1\partial_{jj}V(x_1,\ldots,s\cdot x_j,\ldots,x_n)ds
\in[\kappa,K],
\end{equation}
whenever $x_j\neq 0$.
Going back to the expressions in \eqref{eq:coordinatechange}, we also have, for $r>0$,
$$
g_S(r)\le rq_S(r),
\qquad
0 < rq_S'(r)=\frac{S^2}{2(S^2+r^2)}q_S(r)
\le\frac12q_S(r).
$$
Since, for every $j \in [n]$, $\mathrm{sgn}(T_S(y)_j) = \mathrm{sgn}(y_j)$, applying the previous two displays now implies the bound
\begin{equation} \label{eq:dervbounds}
    0\leq\partial_jV(T_S(y))\operatorname{sgn}(y_j)q_S'(|y_j|)
\leq K\cdot T_S(y)_j\operatorname{sgn}(y_j)q_S'(|y_j|)= K g_S(|y_j|)q_S'(|y_j|)
\leq\frac K2q_S(|y_j|)^2.
\end{equation}
Thus, since $|\partial_jT_S(y)_j| = q_S(|y_j|)$, from \eqref{eq:regularoriginal} we now get,
\begin{equation} \label{eq:lowerbound}
    \kappa\operatorname{diag}\left(q_S(|y_j|)^2\right) \preceq DT_S(y)\nabla^2V(T_S(y))DT_S(y) \preceq \nabla^2W_S(y),
\end{equation}
where the last inequality comes from \eqref{eq:hessianexp}.
For the upper bound, we again use \eqref{eq:regularoriginal} so that,
\begin{equation}
    DT_S(y)\nabla^2V(T_S(y))DT_S(y) \preceq K(DT_S(y))^2 = K\mathrm{diag}\left(q_S(|y_j|)^2\right).
\end{equation}
The combination of the above display with \eqref{eq:dervbounds} and \eqref{eq:lowerbound} implies
$$
\kappa\operatorname{diag}\bigl(q_S(|y_j|)^2\bigr)
\preceq\nabla^2W_S(y)
\preceq\frac{3K}{2}\operatorname{diag}\bigl(q_S(|y_j|)^2\bigr) \preceq \frac{3K}{2}I,
$$
which is \eqref{eq:pullbackhessianbounds}.

Showing that \eqref{eq:operator-assumptions} is a consequence of \eqref{eq:pullbackhessianbounds} is now rudimentary. Since $W_S$ is unconditional,
$\partial_jW_S=0$ when $y_j=0$. The lower bound in
\eqref{eq:pullbackhessianbounds} gives,
\begin{align*}
y_j\partial_jW_S(y) &= y^2_j\int_0^1 \partial_
{jj}W_S(y_1,\dots,s\cdot y_j,\dots,y_n)ds
\geq\kappa y^2_j\int_0^{1}
q_S(s\cdot |y_j|)^2ds\\
& = \kappa y_j^2\int_0^1 \frac{s|y_j|}{\sqrt{S^2+s^2|y_j|^2}}ds
=\kappa|y_j|\bigl(\sqrt{S^2+y_j^2}-S\bigr)\\
&\geq \kappa(y_j^2-|y_j|S) \geq \kappa\left(y_j^2 - \frac{y_j^2+S^2}{2}\right) = \frac{\kappa}{2}(y_j^2 - S^2).
\end{align*}
Summing over $j$ gives \eqref{eq:operator-assumptions}
$$
\langle y,\nabla W_S(y)\rangle
\ge\frac{\kappa}{2}|y|^2-\frac{\kappa nS^2}{2},
$$
with $\kappa_0 = \frac{\kappa}{2}$ and $B_0 = \frac{\kappa nS^2}{2}.$

To finish the proof it remains to show \eqref{eq:pullbackmoments}. For $0\leq r\leq S$,
$$
g_S(r)
\geq \frac{1}{2^{1/4}\sqrt{S}}\int_0^r \sqrt{u}du
\geq \frac{1}{3\sqrt{S}}r^{3/2}.
$$
On the other hand, when $r\geq S$, monotonicity of $q_S$ gives
$$
g_S(r)
\geq g_S(S)+\frac{r-S}{2^{1/4}}
\geq\frac{r-\frac S3}{2^{1/4}}
\ge\frac r2.
$$
Together, we can see that for every $r\geq0$,
$$
r^2\leq4g_S(r)^2+3S^{2/3}g_S(r)^{4/3}.
$$
Since $(T_S)_\#\eta_S=\mu$ and $|T_S(y)_j|=g_S(|y_j|)$, we use the previous inequality and compute for every $j\in[n]$
\begin{align*}
\E_{\eta_S}[Y_j^2]
\leq4\E_{\eta_S}[T_S(Y)_j^2]
+3S^{2/3}\E_{\eta_S}[|T_S(Y)_j|^{4/3}]=4\E_\mu[X_j^2]
+3S^{2/3}\E_\mu[|X_j|^{4/3}].
\end{align*}
By Jensen's inequality and \eqref{eq:regularoriginal},
$
\E_\mu[|X_j|^{4/3}]
\leq\bigl(\E_\mu[X_j^2]\bigr)^{2/3}
\leq2^{2/3}.
$
Therefore, for $S\ge1$,
$$
\E_{\eta_S}[Y_j^2]
\leq 8+3\cdot2^{2/3}S^{2/3}
\leq 14S^{2/3}.
$$
Since $\eta_S$ is unconditional $\E_{\eta_S}[Y_jY_i] = 0$ whenever $i \neq j$, so
$$
\|\operatorname{Cov}(\eta_S)\|_{\mathrm{op}}
=\max\limits_j\E_{\eta_S}[Y_j^2]
\leq 14S^{2/3}.
$$
\end{proof}

\subsection{Useful tools and estimates}
As we explained, our proof proceeds by using Theorem~\ref{thm:bounded} to bound $C_P(\eta_S)$ for some universal constant $S>0$. We will then transfer this bound to $\mu$. 
For the latter step, we use the following well-known fact about the behaviour of the Poincar\'e constant under Lipschitz change of variables.

\begin{lemma}\label{lem:Liptrans}
Let $\nu$ be a probability measure on $\R^n$, and let
$T:\R^n\to\R^n$ be $L$-Lipschitz for some $L>0$.
If $\nu'=T_\#\nu$, then
\[
 C_\mathrm{P}(\nu')\le L^2C_\mathrm{P}(\nu).
\]
\end{lemma}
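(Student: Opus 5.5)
The plan is to prove the inequality directly from the definition of the Poincar\'e constant by pulling test functions back along $T$. We may assume $C_\mathrm{P}(\nu)<\infty$, since otherwise there is nothing to prove. Fix a locally Lipschitz test function $f:\R^n\to\R$ and set $h:=f\circ T$. Since $f$ is locally Lipschitz and $T$ is $L$-Lipschitz, $h$ is locally Lipschitz. Hence $h$ is an admissible test function for $\nu$, and by Rademacher's theorem it is differentiable almost everywhere. By the definition of the pushforward $\nu'=T_\#\nu$,
\[
\mathrm{Var}_{\nu'}(f)=\mathrm{Var}_\nu(f\circ T),
\]
so applying \eqref{eq:poincare} to $h$ under $\nu$ gives
\[
\mathrm{Var}_{\nu'}(f)\le C_\mathrm{P}(\nu)\,\E_\nu\left[|\nabla (f\circ T)|^2\right].
\]

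The main step is a pointwise gradient bound. In the smooth case the chain rule gives $\nabla(f\circ T)(x)=DT(x)^{\top}\nabla f(T(x))$. Since $\norm{DT(x)}_{\mathrm{op}}\le L$,
\[
|\nabla(f\circ T)(x)|\le L\,|\nabla f(T(x))|.
\]
Integrating against $\nu$ and changing variables back to $\nu'$ yields $\E_\nu[|\nabla (f\circ T)|^2]\le L^2\E_{\nu'}[|\nabla f|^2]$. Combining this with the previous display gives $\mathrm{Var}_{\nu'}(f)\le L^2C_\mathrm{P}(\nu)\E_{\nu'}[|\nabla f|^2]$. Taking the infimum over admissible constants proves the lemma.

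The only delicate point is justifying the chain rule when $f$ and $T$ are merely Lipschitz: $\nabla f$ exists only almost everywhere, while $T$ might map a set of positive $\nu$-measure into the non-differentiability set of $f$. To avoid this I would work with the local Lipschitz constant
\[
|\nabla f|(y):=\limsup_{z\to y}\frac{|f(z)-f(y)|}{|z-y|},
\]
which is defined everywhere and equals the Euclidean norm of the gradient wherever $f$ is differentiable. From the definition one gets directly $|\nabla(f\circ T)|(x)\le L\,|\nabla f|(T(x))$ for every $x$, since $|T(z)-T(x)|\le L|z-x|$. I would then argue in either of two ways.
\begin{itemize}
\item Formulate the Poincar\'e inequality with this upper gradient; the standard equivalence for locally Lipschitz functions lets us do so without changing $C_\mathrm{P}$.
\item Reduce to $f\in C^1$ by approximation, using mollification together with convergence of variances and gradient energies.
\end{itemize}
Either route removes the null-set issue.

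In the applications in this paper, $T=T_S$ is a $C^1$ diffeomorphism. There the chain rule holds everywhere, and since $\nu$ is absolutely continuous, $T$ maps Lebesgue-null sets to $\nu'$-null sets, so the obstacle is harmless.
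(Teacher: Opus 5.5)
Your proposal is correct and follows the paper's proof: pull $f$ back to $f\circ T$, apply the Poincar\'e inequality for $\nu$, use the pointwise bound $|\nabla(f\circ T)|\le L\,|\nabla f|\circ T$, and change variables back to $\nu'$. Your handling of a.e.\ differentiability via the local Lipschitz constant is extra care the paper leaves implicit, but one side claim is wrong: $T_S$ is not a $C^1$ diffeomorphism, since its Jacobian $\prod_j q_S(|y_j|)$ vanishes on the coordinate hyperplanes; the null-set issue is still harmless in the paper's applications because the target measures $\mu$ and $\eta_{S'}$ are absolutely continuous.
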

\begin{proof}
For every test function $f$,
\begin{align*}
 \Var_{\nu'}(f)
 =\Var_\nu(f\circ T)&\leq C_\mathrm{P}(\nu)\E_\nu\left[\norm{\grad(f\circ T)(y)}^2\right]\leq L^2C_\mathrm{P}(\nu)\E_\nu\left[\norm{\grad f(T(y))}^2\right]\\
 &=L^2C_\mathrm{P}(\nu)\E_{\nu'}\left[\norm{\grad f(x)}^2\right].
\end{align*}
\end{proof}
Since Theorem \ref{thm:bounded} can only offer a dichotomy, rather than a universal bound, we also need a way to understand how $C_p(\eta_S)$ varies when $S$ changes. The following lemma gives the required estimate.
\begin{lemma}\label{lem:scale-comparison}
For $0<S'\le S$,
\begin{equation}\label{eq:lipbound}
 C_\mathrm{P}(\eta_{S'})
 \le C_\mathrm{P}(\eta_S)
 \le\frac{S}{S'}C_\mathrm{P}(\eta_{S'}).
\end{equation}
\end{lemma}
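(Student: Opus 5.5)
The plan is to realize $\eta_{S'}$ and $\eta_S$ as pushforwards of one another under explicit coordinatewise maps and then apply Lemma~\ref{lem:Liptrans} in both directions. Since $\eta_S=(T_S^{-1})_\#\mu$ and $\eta_{S'}=(T_{S'}^{-1})_\#\mu$, we have
\[
\eta_{S'}=\Phi_\#\eta_S,\quad \Phi:=T_{S'}^{-1}\circ T_S,\qquad \eta_S=\Psi_\#\eta_{S'},\quad \Psi:=\Phi^{-1}=T_S^{-1}\circ T_{S'}.
\]
Both maps act coordinatewise and oddly: $\Phi(y)_j=\sgn(y_j)\,g_{S'}^{-1}(g_S(|y_j|))$. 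Their Jacobians are therefore diagonal, and the Lipschitz constant is the supremum over $r\ge0$ of a one-dimensional derivative. Hence it suffices to show $\mathrm{Lip}(\Phi)\le 1$ and $\mathrm{Lip}(\Psi)^2\le S/S'$.

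To compute these derivatives, I parametrize by the common value $u=g_S(r)=g_{S'}(s)$. Set $h_S(u):=q_S(g_S^{-1}(u))$, so that $(g_S^{-1})'(u)=1/h_S(u)$. Then
\[
\frac{d}{dr}\,g_{S'}^{-1}(g_S(r))=\frac{h_S(u)}{h_{S'}(u)},
\]
and the derivative of the inverse map is the reciprocal. The scaling relations $q_S(r)=q_1(r/S)$ and $g_S(r)=S\,g_1(r/S)$, read off from \eqref{eq:coordinatechange}, give $h_S(u)=h_1(u/S)$. Moreover $h_1$ is increasing, being the composition of the increasing functions $q_1$ and $g_1^{-1}$. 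Since $u/S\le u/S'$, this yields $h_S(u)\le h_{S'}(u)$. So $\Phi$ is $1$-Lipschitz, and Lemma~\ref{lem:Liptrans} gives $C_\mathrm{P}(\eta_{S'})\le C_\mathrm{P}(\eta_S)$.

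For the second inequality I need
\[
\frac{h_1(u/S')}{h_1(u/S)}\le\sqrt{S/S'}\quad\text{for all } u\ge 0.
\]
This follows if $v\mapsto h_1(v)/\sqrt v$ is nonincreasing, i.e. if $\frac{d}{dv}\log h_1(v)\le \frac1{2v}$. With $v=g_1(r)$ and $dr/dv=1/q_1(r)$, a direct computation gives $q_1'/q_1=\frac{1}{2r(1+r^2)}$. Hence
\[
\frac{d}{dv}\log h_1=\frac{1}{2r(1+r^2)\,q_1(r)},
\]
and the required inequality becomes $g_1(r)\le r(1+r^2)q_1(r)$. This is implied by the bound $g_S(r)\le r\,q_S(r)$ already established in the proof of Lemma~\ref{lem:pullbackregularity}, which holds because $q$ is increasing. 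Thus $\Psi$ is $\sqrt{S/S'}$-Lipschitz, and Lemma~\ref{lem:Liptrans} gives $C_\mathrm{P}(\eta_S)\le \frac{S}{S'}C_\mathrm{P}(\eta_{S'})$.

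The only delicate step is this monotonicity of $h_1(v)/\sqrt v$. A cruder bound such as $g_1\le\frac23 r^{3/2}$ points the wrong way near $r=0$, so one must use the sharp comparison $g\le rq$. It is also worth double-checking the behaviour at $r=0$, where $h_1(v)\sim c\,v^{1/3}$; this is compatible with the claimed monotonicity, since $v^{1/3}/\sqrt v=v^{-1/6}$ is decreasing.
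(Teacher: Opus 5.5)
Your proof is correct, and its skeleton coincides with the paper's. It uses the same maps $T_{S'}^{-1}\circ T_S$ and its inverse, applies Lemma~\ref{lem:Liptrans} in both directions, and works with the same one-dimensional derivative $q_S(r)/q_{S'}\bigl(g_{S'}^{-1}(g_S(r))\bigr)$. Your upper bound by $1$ (monotonicity of $h_1$ plus the scaling $h_S(u)=h_1(u/S)$) is the paper's argument written in the variable $u$; the paper uses $g_{S'}^{-1}(g_S(r))\ge S'r/S$ and $q_{S'}(S'r/S)=q_S(r)$.

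The genuine difference is the lower bound $\sqrt{S'/S}$. The paper obtains it with no calculus. Since $q_S\le q_{S'}$, one has $g_{S'}^{-1}(g_S(r))\le r$, so by monotonicity of $q_{S'}$ the derivative is at least $q_S(r)/q_{S'}(r)=\bigl((S'^2+r^2)/(S^2+r^2)\bigr)^{1/4}\ge\sqrt{S'/S}$. This pointwise comparison at the same point $r$ is lossy. Your route, showing that $h_1(v)/\sqrt v$ is nonincreasing via a log-derivative, is slightly longer but bounds the exact derivative.

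It is also sharper than you need. Running it with $v^{1/3}$ in place of $\sqrt v$ only requires $g_1(r)\le\frac23 r^{3/2}(1+r^2)^{3/4}$, which follows from $q_1(t)\le\sqrt t$. So $\Psi$ is in fact $(S/S')^{1/3}$-Lipschitz, which is its exact Lipschitz constant by the asymptotics $h_1(v)\sim c\,v^{1/3}$ you noted. This gives $C_\mathrm{P}(\eta_S)\le(S/S')^{2/3}C_\mathrm{P}(\eta_{S'})$. That improvement is not needed for Theorem~\ref{thm:main}.

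It does show that your closing remark is mistaken. The crude bound $g_1\le\frac23 r^{3/2}$ does not point the wrong way near $r=0$. Since $\frac23 r^{3/2}\le r^{3/2}(1+r^2)^{3/4}$, it already suffices for the $\sqrt v$ statement, and it is exactly what yields the optimal exponent.
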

\begin{proof}
Set $H_S^{S'}=T_{S'}^{-1}\circ T_S$, so that
$(H_S^{S'})_\#\eta_S=\eta_{S'}$. We show that $H_S^{S'}$ is
$1$-Lipschitz and that its inverse is $\sqrt{\frac{S}{S'}}$-Lipschitz.
Lemma~\ref{lem:Liptrans} will then give \eqref{eq:lipbound}.

Since the maps $T_S$ are symmetric, it suffices to estimate
their derivatives in the positive orthant. Moreover, looking at the expressions in \eqref{eq:coordinatechange}, we can see that $q_S\leq q_{S'}$ and that
$$
 g_{S'}\left(\frac{S'r}{S}\right)=\frac{S'}Sg_S(r).
$$
So, for $r\ge0$,
\[
 \frac{S'}S r\le g_{S'}^{-1}(g_S(r))\le r.
\]
Moreover, $q_{S'}$ is increasing and
$q_{S'}\left(\frac{S'r}{S}\right)=q_S(r)$.
Consequently,
\begin{align*}
 \sqrt{\frac{S'}S}
 \leq\left(\frac{S'^2+r^2}{S^2+r^2}\right)^{1/4}
 =\frac{q_S(r)}{q_{S'}(r)}\le\frac{q_S(r)}{q_{S'}(g_{S'}^{-1}(g_S(r)))}\leq1,
\end{align*}
For $r>0$, the inverse function theorem gives
$$
 (g_{S'}^{-1}\circ g_S)'(r)
 =\frac{q_S(r)}{q_{S'}(g_{S'}^{-1}(g_S(r)))}.
$$
Since $(H_S^{S'}(x))_j$ is a function only of $x_j$, this leads to
$$
 \sqrt{\frac{S'}S}
 \le\left|\partial_jH_S^{S'}(x)\right|
 =\left|(g_{S'}^{-1}\circ g_S)'(x_j)\right|\le1,
$$
from which we can deduce the bounds
$$
 \norm{DH_S^{S'}}_{\mathrm{op}}\leq1 \quad \text{ and } \quad 
 \norm{(DH_S^{S'})^{-1}}_{\mathrm{op}}\leq\sqrt{\frac{S}{S'}}.
$$
Hence $H_S^{S'}$ is $1$-Lipschitz and $(H_S^{S'})^{-1}$ is $\sqrt{\frac{S}{S'}}$-Lipschitz. By Lemma \ref{lem:Liptrans} we are done.
\end{proof}

\subsection{Putting things together}\label{sec:fixed-scale}

\begin{proof}[Proof of Theorem \ref{thm:main}]
We begin under the assumption that $V$ satisfies \eqref{eq:regularoriginal}. Let $c_*$ be the constant from Theorem \ref{thm:bounded} and write $a_S=\norm{\Cov(\eta_S)}_{\mathrm{op}}$.
According to the bound \eqref{eq:pullbackmoments} from Lemma \ref{lem:pullbackregularity} we have
 $a_S \leq 14S^{2/3}$. Note that if $S \geq 14^{4/3}$ then,
\begin{equation}\label{eq:abound}
    a_S\leq14S^{2/3}\leq S^2.
\end{equation}
which is required for Theorem \ref{thm:bounded}. Thus for $S \geq 14^{4/3}$, Lemma \ref{lem:pullbackregularity} shows we can apply Theorem \ref{thm:bounded} to $\eta_S$ from which we get the implication, 
\begin{equation} \label{eq:dicho}
 C_{\mathrm{P}}(\eta_S)\leq\frac{S^2}{3}
 \quad\Longrightarrow\quad
 C_{\mathrm{P}}(\eta_S)\leq\frac{9a_S}{c_*}.
\end{equation}
We now set $S_* = \max\left(14,3\frac{126}{c_*}\right)^{4/3}$ and claim that 
\begin{equation} \label{eq:Sstarbound}
    C_\mathrm{P}(\eta_{S_*}) \leq \frac{S^2_*}{3}.
\end{equation}
%In light of \eqref{eq:dicho} and \eqref{eq:abound} we now set $S_* = \max\left(14,\frac{126}{c_*}\right)^{4/3}$ for which we can see that, if $C_\mathrm{P}(\eta_{S_*}) \leq \frac{S_*}{3}$, then
%$$C_\mathrm{P}(\eta_{S_*}) \leq \frac{9a_{S_*}}{c_*}\leq \frac{126S_*^{2/3}}{c_*}\leq S_*^2.$$
%So, it remains to rule out the possibility that $C_\mathrm{P}(\eta_{S_*}) > \frac{S_*}{3}$.
Indeed, suppose towards a contradiction, that this is not the case and choose
$$
 S'=\frac{3C_\mathrm{P}(\eta_{S_*})}{S_*}>S_*.
$$
In this case Lemma \ref{lem:scale-comparison} yields
$$
 C_\mathrm{P}(\eta_{S'}) \leq\frac{S'}{S_*}C_\mathrm{P}(\eta_{S_*}) =\frac{S'^2}{3}.
$$
Applying the implication in \eqref{eq:dicho} to $S'$, we obtain
$$
 C_\mathrm{P}(\eta_{S_*})\le C_\mathrm{P}(\eta_{S'})\leq 9\frac{a_{S'}}{c_*}\leq \frac{126}{c_*}S'^{2/3}=
 \frac{126}{c_*}\left(\frac{3C_\mathrm{P}(\eta_{S_*})}{S_*}\right)^{2/3}.
$$
Rearranging terms gives
$$
 C_\mathrm{P}(\eta_{S_*})\leq\frac{9\cdot126^3}{S_*^2c_*^3}\le\frac{S_*^2}{3},
$$
where the last inequality follows since $S_*^{4/3}\geq3\frac{126}{c_*}$.
This contradicts our assumption on  $C_\mathrm{P}(\eta_{S_*})$, and so we have established \eqref{eq:Sstarbound}. Finally, $(T_{S_*})_\#\eta_{S_*}=\mu$ and $T_{S_*}$ is
$1$-Lipschitz, as in \eqref{eq:Tlip}. Lemma~\ref{lem:Liptrans} therefore gives
\begin{equation}\label{eq:regularPI}
 C_P(\mu)\le C_\mathrm{P}(\eta_{S_*})\leq\frac{S_*^2}{3}
\end{equation}
which is a universal constant.

To finish the proof, we now remove assumption \eqref{eq:regularoriginal}. Let $\mu=e^{-V}dx$ be an arbitrary
isotropic unconditional log-concave probability measure.
For $\eps>0$, let $\gamma_\eps$ be the Gaussian density with variance $\eps$. Consider the measure $\mu_\eps$ with density proportional to $e^{-\eps|x|^2}\left(e^{-V}*\gamma_\eps\right)$, and write $V_\eps = -\log\left(\frac{d\mu_\eps}{dx}\right)$. Clearly $V_\eps$ is unconditional and $V_\eps \in C^\infty$. Moreover, it is straightforward to check 
$$
2\eps I\preceq\nabla^2V_\eps\preceq \left(\frac{1}{\eps}+2\eps\right)I.
$$
The regularization $\mu_\eps$ was also considered in \cite[Section~4]{klartag2025isoperimetric} where they observed that 
$$
\operatorname{Cov}(\mu_\eps)\longrightarrow \operatorname{Cov}(\mu)=\mathrm{I}\quad
\text{ and } \quad
C_\mathrm{P}(\mu)\leq\liminf_{\eps\to0}C_\mathrm{P}(\mu_\eps).
$$
Since each $\mu_\eps$ is unconditional and hence centered, the above shows that for $\eps$ sufficiently small covariance convergence implies
that $\max_i\E_{\mu_\eps}[X_i^2] = \|\mathrm{Cov}(\mu_\eps)\|_{\mathrm{op}}\leq2$.
Thus for $\eps$ sufficently small, $\mu_\eps$ satisfies \eqref{eq:regularoriginal} and $C_\mathrm{P}(\mu_\eps)\leq \frac{S_*^2}{3}.$ We can now conclude from the above,
$$
C_\mathrm{P}(\mu)\leq\liminf_{\eps\to0}C_\mathrm{P}(\mu_\eps)
\leq\frac{S_*^2}{3}.
$$
\end{proof}

\section{A spectral gap for the auxiliary measure: a proof of the key dichotomy result}\label{sec:auxiliary-proof}

Let $W\in C^2(\R^n)$ be unconditional and satisfy
\eqref{eq:operator-assumptions}. Below we will consider the following functions: for $r\ge0$
\begin{equation}\label{eq:qchoices}
\begin{aligned}
 q_S(r):=q(r)&=\left(\frac r{\sqrt{S^2+r^2}}\right)^{1/2},
 &c_S(r) = c(r)&=r \frac{q'(r)}{q(r)}=\frac{S^2}{2(S^2+r^2)},\qquad S>0.
\end{aligned}
\end{equation}
The function $q$ is the weight we apply to our measure $\mu$, and we will use $\we$ to adapt the Langevin operator.
With these functions, we make the following definitions
\begin{equation}\label{eq:auxiliary-density}
 d\eta(x)\propto e^{-W(x)}\prod_{j=1}^n q(|x_j|)\,dx,\qquad
 c_j(x)=c(|x_j|), j \in [n],\qquad
 a=\|\Cov(\eta)\|_{\mathrm{op}}=\max_{j \in [n]}\E_{\eta}[X_j^2].
\end{equation}
The last equality holds because of the unconditionality of $W$ which makes $\eta$ centered with a diagonal covariance matrix. Below, we will always use $X$ for a random vector with law $\eta$, and for $j \in[n]$ we will write $W_j := \partial_j W$, as well as $c_j' = \partial_j c_j$.

\subsection{The Dunkl Langevin operator}\label{sec:operator}
The proof of Theorem \ref{thm:bounded} crucially relies on modifying the standard Langevin operator, which gives rise to the standard gradient Dirichlet form $\E_{\eta}\left[|\nabla f|^2\right]$. Our modification adds a so-called Dunkl term, aimed at helping to exploit the symmetries in the measure $\eta$.

To define the modification term, we begin by introducing for every $j \in [n]$ the operator $R_j$, which is the linear reflection operator with respect to coordinate $j$. Using these operators, we define the family of difference operators $D_j$ acting on test functions by
$$D_jf(x):=\frac{f(x)-R_jf(x)}{2x_j} = \frac{f(x_1,\dots,x_j,\dots,x_n)-f(x_1,\dots,-x_j,\dots,x_n)}{2x_j}.$$
The difference operators and the functions $\we_j$ from \eqref{eq:qchoices} now give rise to our modified operator $A$, which we call here the \emph{Dunkl Langevin operator}:
\begin{equation}\label{eq:DunkelRegu}
  Af=-\Delta f+\langle\nabla W,\nabla f\rangle
       -\sum_j\frac{c_j}{x_j}\partial_jf
       +\sum_j\frac{c_j}{2x_j^2}(f-R_jf).
\end{equation}
To provide some intuition for the definition, remark that $-\Delta f+\nabla W\cdot\nabla f$ is the standard Langevin operator associated to $e^{-W}dx$. By adding the term $ -\sum_j\frac{c_j}{x_j}\partial_jf$ it transforms into the Langevin operator of $\eta$. Hence the new term is the one containing $c_j\frac{f-R_jf}{2x_j^2} = \frac{c_j}{x_j}D_jf$. Additionally, we also have,
$$\sum_j\frac{c_j}{2x_j^2}(f-R_jf)-\sum_j\frac{c_j}{x_j}\partial_jf = \sum\limits_j \frac{c_j}{x_j}\left(D_jf - \partial_j f\right).$$
In particular, we have $\frac{D_jf - \partial_j f}{x_j}\xrightarrow{x_j\to 0} -\partial_{jj}f.$ Thus, the additional summation terms can be thought of as a weighted Laplacian term.

Next, we define an associated modified Dirichlet form which arises from the operator $A$,
$$ \cE(f,g)=\E_\eta\left[\langle\nabla f,\nabla g\rangle\right] +\sum_j\E_\eta[\we_j\cdot D_jfD_jg].$$
Note that the above formulas require some regularity and integrability assumptions. In Appendix \ref{sec:analyticdetails} we give a rigorous treatment to the domains of $A$ and $\cE$, and prove that all formulas are well-defined. As a slight abuse of notation, in the sequel we will colloquially refer to a \emph{test function} as a function for which all these formulas hold.

Our first claim is that $\mathcal{E}$ is indeed the Dirichlet form associated to $A$. 
\begin{lemma} \label{lem:diriform}
    Let the above notation prevail, and let$f$ and $g$ be two test functions. Then, 
    $$\E_\eta\left[g\cdot Af\right]=\cE(g,f),$$
    in particular $A$ is symmetric in $L^2(\eta),$ i.e.,
    $$\E_\eta\left[g\cdot Af\right] = \E_\eta\left[Ag\cdot f\right].$$
\end{lemma}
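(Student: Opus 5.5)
The identity splits into two pieces, which I will handle separately. The Langevin part of $A$ should produce the gradient term of $\cE$ by ordinary integration by parts. The reflection part should produce the Dunkl term $\sum_j\E_\eta[c_jD_jfD_jg]$ by a symmetrization argument that uses only the invariance of $\eta$ under each $R_j$.

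\textbf{Step 1 (Langevin part).} Write $\rho(x)=e^{-W(x)}\prod_j q(|x_j|)$ for the unnormalized density of $\eta$. For $x_j\neq0$ we have
\[
\partial_j\log\rho=-W_j+\frac{q'(|x_j|)}{q(|x_j|)}\sgn(x_j)=-W_j+\frac{c_j}{x_j},
\]
since $c(r)=rq'(r)/q(r)$. Hence
\[
-\Delta f+\langle\nabla W,\nabla f\rangle-\sum_j\frac{c_j}{x_j}\partial_jf=-\rho^{-1}\operatorname{div}(\rho\nabla f),
\]
which is exactly the Langevin operator $L_\eta$ of $\eta$. I will integrate by parts on each open orthant, or equivalently on $\R^n$ minus the coordinate hyperplanes, to get
\[
\E_\eta[gL_\eta f]=\E_\eta[\langle\nabla f,\nabla g\rangle].
\]
The boundary terms on $\{x_j=0\}$ vanish because $\rho$ carries the factor $q(|x_j|)\sim |x_j|^{1/2}/\sqrt S\to0$ there, while $g$ and $\nabla f$ stay bounded for test functions. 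The terms at infinity vanish by the Gaussian-type decay coming from $x\cdot\nabla W\ge\kappa_0|x|^2-B_0$ together with the growth restrictions on test functions. This is where I rely on Appendix~\ref{sec:analyticdetails}: the test-function class is chosen so that these limits and the integrability of $c_j\partial_jf/x_j\cdot g$ (note $c_j/x_j\sim 1/(2x_j)$ near $0$, integrable against $|x_j|^{1/2}$) are justified.

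\textbf{Step 2 (Dunkl part).} Since $c_j$ and $\eta$ are $R_j$-invariant and $x_j\mapsto -x_j$ under $R_j$, the substitution $x\mapsto R_jx$ gives
\[
\E_\eta\Bigl[g\frac{c_j}{x_j}D_jf\Bigr]=\E_\eta\Bigl[R_jg\cdot\frac{c_j}{-x_j}R_j(D_jf)\Bigr]=-\E_\eta\Bigl[R_jg\frac{c_j}{x_j}D_jf\Bigr],
\]
because $R_jD_jf=D_jf$, as $D_jf$ is even in $x_j$. Averaging the two expressions yields
\[
\E_\eta\Bigl[\frac{g-R_jg}{2}\cdot\frac{c_j}{x_j}D_jf\Bigr]=\E_\eta[c_jD_jgD_jf].
\]
Recalling $\frac{c_j}{2x_j^2}(f-R_jf)=\frac{c_j}{x_j}D_jf$ and summing over $j$, then adding Step 1, gives $\E_\eta[gAf]=\cE(g,f)$. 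Symmetry of $A$ follows immediately, since $\cE$ is symmetric in its two arguments.

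\textbf{Main obstacle.} The algebra is straightforward; the real difficulty is integrability near the hyperplanes. Each separate term, such as $\E_\eta[g\,c_j x_j^{-1}D_jf]$, involves a $1/x_j$ singularity. It converges because $D_jf$ is bounded for $C^1$ test functions and $\int |x_j|^{-1}|x_j|^{1/2}\,dx_j<\infty$. Still, the splitting into separate terms and the orthant-wise integration by parts must be justified within the domain specified in Appendix~\ref{sec:analyticdetails}. If needed, I would first prove the identity for compactly supported smooth $f,g$, cutting off an $\epsilon$-neighborhood of the hyperplanes and letting $\epsilon\to0$, and then extend by density.
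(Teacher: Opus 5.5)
Your proposal is correct and matches the paper's proof. Both arguments handle the Langevin part by integration by parts using $\partial_j\eta=(c_j/x_j-W_j)\eta$, and both obtain the Dunkl term by the change of variables $x\mapsto R_jx$ followed by averaging. Your added remarks on vanishing boundary terms at the coordinate hyperplanes and at infinity are consistent with the paper, which defers those analytic details to its appendix.
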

\begin{proof}
For every $j \in[n]$, the density $\eta$ satisfies
$$
\partial_j\eta(x)
=\left(\frac{c_j}{x_j}-W_j\right)\eta(x).
$$
Thus integration by parts gives
\begin{equation} \label{eq:lanegibt}
    \E_\eta\left[
\left(-\partial_{jj}f+
\left(W_j-\frac{c_j}{X_j}\right)\partial_jf\right)g
\right]
=\mathbb E_\eta[\partial_jf\,\partial_jg],
\end{equation}
which is the standard identity for the Langevin operator.
For the reflection term, changing variables $x\to R_jx$ gives
$$
\E_\eta\left[
\frac{c_j}{X_j}D_jfg
\right]=\E_\eta\left[
\frac{c_j}{2X_j^2}(f-R_jf)g
\right]
=
-\mathbb E_\eta\left[
\frac{c_j}{2X_j^2}(f-R_jf)R_jg
\right].
$$
Averaging the two expressions, we obtain
\begin{equation}\label{eq:dunkelibp}
    \E_\eta\left[
\frac{c_j}{2X_j^2}(f-R_jf)g
\right]
=\E_\eta\left[
\frac{c_j}{4X_j^2}(f-R_jf)(g-R_jg)
\right]=\E_\eta[c_jD_jfD_jg].
\end{equation}
Summing over \eqref{eq:lanegibt} and \eqref{eq:dunkelibp} over $j$ proves
$$
\E_\eta\left[g\cdot Af\right]
=\mathbb E_\eta[\langle\nabla f,\nabla g\rangle]
+\sum_j\mathbb E_\eta[c_jD_jfD_jg]
=\mathcal E(f,g).
$$
\end{proof}
Given Lemma \ref{lem:diriform}, we can now see that $A$ is a non-negative operator in the sense that $\E_\eta[f\cdot Af] \geq 0$, and that constant functions lie in $\mathrm{ker}(A)$. It then makes sense to understand the first non-trivial eigenvalue of $A$. If we abbreviate $\cE(f,f) = \cE(f)$, then this eigenvalue is
\begin{equation}\label{eq:reflection-form}
\begin{aligned}
\lambda&=\inf_{\substack{
                          \Var_\eta(f)>0}}
                       \frac{\cE(f)}{\Var_\eta(f)},\\
 \text{where: }\cE(f)&=\E_\eta\left[|\nabla f|^2\right]+
                 \sum\limits_{j=1}^n\E_\eta\left[c_j(D_jf)^2\right].
\end{aligned}
\end{equation}
A key property of the spectral gap $\lambda$ is that it gives good control on $C_\mathrm{P}(\eta)$, the Poincar\'e constant of $\eta$. We show this by establishing that the Dirichlet form $\cE$ is comparable to the standard gradient Dirichlet form $\E_\eta\left[|\nabla f|^2\right]$. As we explained in Section \ref{sec:ideas} this is essentially a weighted Hardy-type inequality, similar to\cite{velicu2021hardy}. For our specific choice of $q$ we provide the proof below.
\begin{lemma} \label{lem:diriforms}
    Let $f$ be a test function. Then,
    $$\cE(f)\leq 9\cdot \E_\eta\left[|\nabla f|^2\right].$$
    Consequently
    $$\frac{1}{\lambda}\leq C_\mathrm{P}(\eta)\leq\frac{9}{\lambda}.$$
\end{lemma}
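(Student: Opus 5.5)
The plan is to prove the inequality one coordinate at a time: for each $j\in[n]$ I aim to show
$$\E_\eta\bigl[c_j(D_jf)^2\bigr]\le 8\,\E_\eta\bigl[(\partial_jf)^2\bigr],$$
and then sum over $j$. Together with the term $\E_\eta[|\nabla f|^2]$ already present in $\cE(f)$, this gives $\cE(f)\le 9\,\E_\eta[|\nabla f|^2]$. The comparison of constants is then immediate. Since $c_j\ge 0$, we have $\E_\eta[|\nabla f|^2]\le \cE(f)\le 9\,\E_\eta[|\nabla f|^2]$, and taking infima of the ratios with $\Var_\eta(f)$ in \eqref{eq:reflection-form} and \eqref{eq:poincare} yields $\frac{1}{C_{\mathrm P}(\eta)}\le\lambda\le \frac{9}{C_{\mathrm P}(\eta)}$.

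\emph{Step 1: reduce to the odd part.} Fix $j$ and split $f=e+o$, where $e=\frac{f+R_jf}{2}$ and $o=\frac{f-R_jf}{2}$ are the even and odd parts in $x_j$. Then $D_jf=o/x_j$ and $o(x)=0$ on $\{x_j=0\}$. The density of $\eta$ is invariant under $R_j$, and $\partial_je$ is odd while $\partial_jo$ is even in $x_j$. Hence they are orthogonal in $L^2(\eta)$, and $\E_\eta[(\partial_jf)^2]\ge\E_\eta[(\partial_jo)^2]$. Freezing all other coordinates and using the symmetry to restrict to $x_j=r>0$, it then suffices to prove a one-dimensional weighted Hardy inequality. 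Writing $w(r)=q(r)e^{-W(\dots,r,\dots)}$, the target is
$$\int_0^\infty \frac{c(r)}{r^2}\,o(r)^2\,w(r)\dif r\;\le\;8\int_0^\infty o'(r)^2\,w(r)\dif r,\qquad o(0)=0.$$

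\emph{Step 2: Muckenhoupt's criterion.} I will invoke the classical Hardy inequality of \cite{muchenhoupt1972hardy}. For functions vanishing at $0$, $\int_0^\infty o^2\,U\le 4B\int_0^\infty (o')^2\,w$, where
$$B=\sup_{t>0}\Bigl(\int_t^\infty U\Bigr)\Bigl(\int_0^t w^{-1}\Bigr),\qquad U=\frac{c\,w}{r^2}.$$
So the goal is to show $B\le 2$. Two facts are needed.

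First, $r\mapsto W(\dots,r,\dots)$ is nondecreasing on $[0,\infty)$, because $W$ is convex and even in $x_j$, so $x_jW_j\ge0$. Consequently $e^{-W(r)}\le e^{-W(t)}$ for $r\ge t$ and $e^{W(r)}\le e^{W(t)}$ for $r\le t$, and the exponential factors cancel inside the product defining $B$.

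Second, the ratio $q(r)/\sqrt r=(S^2+r^2)^{-1/4}$ is decreasing. Hence $q(r)\ge q(t)\sqrt{r/t}$ for $r\le t$, which gives $\int_0^t q^{-1}\le 2t/q(t)$. Likewise $q(r)\le q(t)\sqrt{r/t}$ for $r\ge t$, and combined with $c\le\frac12$ this gives
$$\int_t^\infty \frac{c\,q}{r^2}\dif r\le \frac{q(t)}{2\sqrt t}\int_t^\infty r^{-3/2}\dif r=\frac{q(t)}{t}.$$
Multiplying the two bounds gives $B\le2$, hence the constant $8$.

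\emph{Main obstacle.} The delicate point is the choice of Hardy argument, not the computation. The naive route is to write $D_jf$ as the average of $\partial_jf$ over $[-|x_j|,|x_j|]$ and apply Jensen followed by Fubini. That route fails near $x_j=0$: it requires $\int_t^\infty q'\lesssim q(t)$, which is false as $t\to0$. So one must genuinely use that $o$ vanishes at the origin and use the two-sided Muckenhoupt criterion, where the $\sqrt r$ vanishing rate of $q$ is exactly balanced ($\int_0^t q^{-1}\sim\sqrt t$ against $\int_t^\infty q'/r\sim t^{-1/2}$).

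A secondary check is that the test-function class of Appendix~\ref{sec:analyticdetails} permits restricting to slices and applying the one-dimensional inequality there. The approximation of $o$ by functions vanishing at $0$ is harmless, since $o$ is odd and locally Lipschitz.
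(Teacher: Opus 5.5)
Your proof is correct. It reaches the same per-coordinate constant $8$, hence $9$, as the paper, but by a different mechanism.

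\textbf{What is shared.} Both arguments pass to the odd part $h=\frac{f-R_jf}{2}$ in $x_j$. Both rely on the same two structural facts:
\begin{itemize}
\item $x_jW_j\ge 0$, which is your monotonicity of $W$ in $|x_j|$;
\item $c\le\frac12$, which is exactly your monotonicity of $q(r)/\sqrt r$, since $c=rq'/q$.
\end{itemize}

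\textbf{The paper's route.} The paper neither slices nor invokes Muckenhoupt. It integrates by parts directly against $\eta$, using $\partial_j\eta=(c_j/x_j-W_j)\eta$, to get
\[
2\E_\eta[D_jf\,\partial_jh]=\E_\eta[(1-c_j+X_jW_j)(D_jf)^2]\ge\tfrac12\E_\eta[(D_jf)^2].
\]
Cauchy--Schwarz then gives the \emph{unweighted} bound $\E_\eta[(D_jf)^2]\le 16\,\E_\eta[(\partial_jf)^2]$, and $c_j\le\frac12$ yields $8$. This is a few lines and self-contained. It reuses the logarithmic-derivative identity that also drives Lemma~\ref{lem:weighted}. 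It proves a slightly stronger statement, with no $c_j$ on the left. It does require differentiating the density and a routine check that the boundary terms at $x_j=0$ vanish.

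\textbf{Your route.} You use the two facts only in integrated form and avoid any boundary bookkeeping. Because Muckenhoupt's condition is also necessary (the optimal constant lies between $B$ and $4B$), your argument gives a sharp test one could run on other choices of the weight $q$. The cost is an external theorem, plus the slicing and absolute-continuity check you mention.

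\textbf{A correction to your \emph{main obstacle} remark.} What fails is only the Jensen--Fubini averaging argument, not elementary methods in general. The classical integration-by-parts proof of Hardy's inequality, which is what the paper uses, works here directly. So Muckenhoupt's criterion is convenient but not forced.
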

\begin{proof}
Fix $j$ and set $h=\frac{f-R_jf}{2}$, so that $\frac{h}{x_j}=D_jf$.
Again using
$$
\partial_j\eta(x) = 
\left(\frac{c_j}{x_j}-W_j\right)\eta(x),
$$
integration by parts gives
$$
2\E_\eta[D_jf\partial_jh] = \E_{\eta}\left[\frac{\partial_j(h^2)}{X_j}\right] = \E_{\eta}\left[\left(\frac{1-c_j}{X^2_j} +\frac{W_j}{X_j}\right)h^2\right] = \E_\eta\left[(1-c_j+X_jW_j)(D_jf)^2\right].
$$
Since $W$ is convex and unconditional, $X_jW_j\geq0$, coupled with $c_j\leq \frac{1}{2}$, and the Cauchy-Schwarz inequality we get
$$
\frac{1}{2}\E_\eta[(D_jf)^2]
\leq2\E_\eta[D_jf\partial_jh]
\leq
2\sqrt{\E_\eta[(D_jf)^2]\,
       \E_\eta[(\partial_jh)^2]}.
$$
Rearranging terms,
$$
\E_\eta[(D_jf)^2]
\leq16\E_\eta[(\partial_jh)^2]
\leq16\E_\eta[(\partial_jf)^2],
$$
where the last inequality follows since $\eta$ is unconditional and since
$$
\partial_jh=\frac{\partial_jf+R_j\partial_jf}{2}.
$$
Summing over $j$ and using $c_j\leq\frac{1}{2}$ again, we obtain
\begin{align*}
\cE(f)
=\E_\eta[|\nabla f|^2]
+\sum_j\E_\eta[c_j(D_jf)^2]\leq\E_\eta[|\nabla f|^2]
+8\sum_j\E_\eta[(\partial_jf)^2]
=9\E_\eta[|\nabla f|^2].
\end{align*}
Together with the trivial inequality $\cE(f)\geq\E_\eta[|\nabla f|^2]$,
from \eqref{eq:reflection-form} we arrive at,
$$
\frac{1}{\lambda}\leq C_\mathrm{P}(\eta)\leq\frac{9}{\lambda}.
$$
\end{proof}
\subsection{Symmetries of the eigenfunction and the proof of Theorem~\ref{thm:bounded}}\label{sec:dichotomy}
The next proposition characterizes a series of possible symmetries of an eigenfunction corresponding to $\lambda.$ Using these symmetries, we establish a dichotomy for the potential values of $\lambda.$
\begin{proposition}\label{prop:keyestimate}
Let $W\in C^2(\R^n)$ be unconditional and satisfy
\eqref{eq:operator-assumptions} and let $\eta$ and $\lambda$ be as in
\eqref{eq:auxiliary-density} and \eqref{eq:reflection-form}.
Then the following hold:
\begin{enumerate}
    \item There exists an $i \in[n]$ and an eigenfunction $f$, such that
    \begin{equation*}
     Af=\lambda f,\qquad \E_\eta[f^2]=1,\qquad
 R_if=-f,\qquad R_jf=f\quad(j\ne i),\qquad x_if\ge0
 \quad\eta\text{-almost everywhere}.
    \end{equation*}
    \item If $\lambda\geq \frac{3}{S^2}$, then 
\begin{equation}\label{eq:Jupper}
 \lambda \ge \left(\frac{\E_\eta\left[c_i\right]}{2\E_\eta\left[X_if\right]^2}\E_\eta\left[c_i\frac{W_if^2}{X^3_i}\right]\right)^{1/3}.
\end{equation}
\end{enumerate}
\end{proposition}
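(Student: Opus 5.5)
The plan is to prove part (1) by symmetry reduction plus a Bochner-type commutation, and part (2) by testing the eigenvalue equation against well-chosen functions and interpolating. I assume from Appendix~\ref{sec:analyticdetails} that $A$ is self-adjoint with discrete spectrum on $L^2(\eta)$, so that $\lambda$ in \eqref{eq:reflection-form} is attained by an eigenfunction.

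\textbf{Part (1): reduction to parity sectors.} Since $W$ and $c_j$ are unconditional, $A$ commutes with every $R_j$. Hence $L^2(\eta)$ splits into $A$-invariant parity sectors indexed by the set $P\subseteq[n]$ of coordinates in which $f$ is odd. On the sector $P$ we have $D_jf=f/x_j$ for $j\in P$ and $D_jf=0$ otherwise, so
\[
\cE(f)=\E_\eta|\nabla f|^2+\sum_{j\in P}\E_\eta\!\left[c_jf^2/X_j^2\right].
\]

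\emph{Sectors with $|P|\ge 2$.} Take $f$ in such a sector, fix $i\in P$, and set $g=\sgn(x_i)|f|$. Then $g$ is odd in $x_i$, even in every other coordinate, has mean zero, satisfies $\E g^2=\E f^2$ and $|\nabla g|=|\nabla f|$ a.e. The Dunkl sum for $g$ keeps only the $j=i$ term, so $\cE(g)\le\cE(f)$. Thus the infimum over odd-type functions is attained in some one-odd sector $\{i\}$. Replacing a minimizer $f$ by $\sgn(x_i)|f|$ keeps it a minimizer, hence an eigenfunction, and gives $x_if\ge0$.

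\emph{The even sector $P=\emptyset$.} Here I use a commutation identity. For every $j$,
\[
\partial_jAf=A\partial_jf+\sum_k W_{jk}\,\partial_kf-\frac{c_j'}{x_j}\,\partial_jf .
\]
This holds because differentiating $-\frac{c_j}{x_j}\partial_jf$ produces a term $+\frac{c_j}{x_j^2}\partial_jf$, which is exactly the Dunkl term of $A$ acting on the $x_j$-odd function $\partial_jf$. Moreover $c_j'/x_j=c'(|x_j|)/|x_j|\le 0$.

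Now let $f$ be even with $Af=\lambda_0f$. Then
\[
\lambda_0\,\E|\nabla f|^2=\sum_j\E[\partial_jf\,\partial_jAf]\ \ge\ \sum_j\cE(\partial_jf)\ \ge\ \lambda_{\rm odd}\sum_j\E(\partial_jf)^2 ,
\]
using $\hessian W\succeq0$ and the sign of $c_j'/x_j$. Hence $\lambda_0\ge\lambda_{\rm odd}$, and $\lambda$ is realised in a one-odd sector.

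\textbf{Part (2): testing the eigenvalue equation.} Since $Ax_i=W_i$ (the $c_i/x_i$ terms cancel), Lemma~\ref{lem:diriform} gives
\[
\lambda\,\E[X_if]=\E[W_if].
\]
From $\cE(f)=\lambda$ I also get $\E[c_if^2/X_i^2]\le\lambda$ and $\E(\partial_if)^2\le\lambda$.

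To reach $\E[c_iW_if^2/X_i^3]$, I would test $Af=\lambda f$ against $\varphi=c_if/X_i$, which is even in every coordinate, and integrate by parts in $x_i$. This should express $\E[c_iW_if^2/X_i^3]$ in terms of:
\begin{itemize}
\item $\lambda\,\E[c_if^2/X_i^2]$;
\item gradient terms, controlled via Cauchy--Schwarz by $\lambda$ times Dunkl-weighted quantities;
\item error terms involving $c_i'$.
\end{itemize}
The error terms are bounded using $|c'(r)|\le 2r\,c(r)/S^2$, and the hypothesis $\lambda\ge3/S^2$ lets me absorb them into $\lambda$-multiples. The result should be a bound of the form $\E[c_iW_if^2/X_i^3]\lesssim\lambda^2\,\E[c_if^2/X_i^2]$.

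Finally I convert $\E[c_if^2/X_i^2]$ into the normalisation $2\lambda\,\E[X_if]^2/\E[c_i]$. For this I would use Cauchy--Schwarz on $\E[c_i]=\E[\sqrt{c_i}\cdot\sqrt{c_i}]$ together with the identity $\E[\partial_if+c_if/X_i]=\lambda\,\E[X_if]$, which comes from testing against $x_i$; the positivity $x_if\ge0$ is used to fix signs.

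\textbf{Main obstacle.} The hard step is the last one: arranging the integrations by parts and Hölder/Cauchy--Schwarz steps so that the powers of $\lambda$ combine to exactly $\lambda^3$, with the constant $2$, while the $c'$ error terms are genuinely absorbed under $\lambda\ge3/S^2$. I would first try to work out the one-dimensional slice computation in $x_i$ with the other coordinates frozen, to find the right test function, before attempting the full argument.
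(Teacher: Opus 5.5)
Your Part (1) is sound. It is even a little more economical than the paper's Steps 2--4: passing to $\sgn(x_i)|f|$ handles the multi-odd sectors and the sign condition in one move. For existence, the non-strict comparison $\lambda_{\mathrm{even}}\ge\lambda_{\mathrm{odd}}$, which needs only $-c_j'/x_j\ge 0$, suffices; the paper instead uses $-c_j'/x_j>0$ to exclude even eigenfunctions outright.

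Part (2) has a genuine gap. The test you propose is vacuous: $\varphi=c_if/X_i$ is even in $x_i$ while $Af=\lambda f$ is odd, so $\E_\eta[\varphi\,Af]=\lambda\E_\eta[\varphi f]=0$ identically. Perhaps you meant to divide the eigenvalue equation by $x_i$ and pair it with $c_ig$, where $g=f/x_i$. That gives exactly the Dunkl part $\E_\eta[c_iD_if\,D_i(Af)]$ of the paper's Bochner identity,
\[
\lambda\E_\eta[c_ig^2]=\E_\eta[c_i|\nabla g|^2]+\E_\eta\Bigl[\Bigl(c_i'-\frac{2c_i}{X_i}\Bigr)g\,\partial_ig\Bigr]+\E_\eta\Bigl[c_i\frac{W_i}{X_i}g^2\Bigr].
\]
But the middle term has no sign. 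Integrating it by parts in $x_i$ produces the divergent quantity $\E_\eta[c_i(1-c_i)g^2/X_i^2]$, since $g$ need not vanish on $\{x_i=0\}$. In the paper this term is cancelled against the commutator term $(2c_i/x_i-c_i')\partial_if\,\partial_ig$ coming from $\partial_i(Af)$, which leaves $(2c_i-x_ic_i')(\partial_ig)^2\ge0$. That requires the full identity $\E_\eta[(Af)^2]=\cE(f,Af)$ of Lemma~\ref{lem:identities}. It must be combined with the spectral gap applied to the derivatives: to $\sgn(x_i)\partial_jf$ for $j\ne i$, which gains $\E_\eta[c_i(\partial_j(f/x_i))^2]$, and to the unconditional $\partial_if$, which costs only $\lambda\E_\eta[\partial_if]^2$. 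This is how \eqref{eq:doubleenergy} arises.

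Your conversion step then runs in the wrong direction and misses two further ideas. Cauchy--Schwarz gives $\E_\eta[c_if/X_i]^2\le\E_\eta[c_i]\,\E_\eta[c_if^2/X_i^2]$, which is a \emph{lower} bound on $\E_\eta[c_if^2/X_i^2]$; you need to control that quantity from above by its mean part. Any bound of size $\lambda\E_\eta[c_if^2/X_i^2]\le\lambda^2$ is useless for Theorem~\ref{thm:bounded}, which needs exactly the normalisation by $\E_\eta[X_if]^2$. As written, your intermediate bound $\lambda^2\E_\eta[c_if^2/X_i^2]$ is not even scale-consistent.

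The first missing ingredient is the weighted spectral gap for unconditional functions, Lemma~\ref{lem:weighted}, applied to $f/x_i$. This is where $\lambda\ge 3/S^2$ enters, through $\lambda-\frac{3}{2S^2}\ge\frac{\lambda}{2}$. It trades the variance part of $\lambda\E_\eta[c_if^2/X_i^2]$ for $2\E_\eta[c_i|\nabla(f/x_i)|^2]$, leaving $\E_\eta[c_iW_if^2/X_i^3]\le\lambda\E_\eta[\partial_if]^2+\lambda\E_\eta[c_if/X_i]^2/\E_\eta[c_i]$.

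The second missing ingredient is a sign. Merging these terms into $\bigl(\E_\eta[\partial_if]+\E_\eta[c_if/X_i]\bigr)^2=\lambda^2\E_\eta[X_if]^2$ requires $\E_\eta[\partial_if]\ge0$. This does not follow from $x_if\ge0$, which only gives $\E_\eta[c_if/X_i]\ge0$. The paper proves it in Step 5. It first tests against $\phi_t=x_i/\sqrt{x_i^2+t^2}$ to get $\E_\eta[c_i|f|/X_i^2]\le\lambda\E_\eta|f|$. It then uses a Chebyshev-type rearrangement, based on $c(r)/r^2$ being decreasing, to deduce $\E_\eta[c_if/X_i]\le\lambda\E_\eta[X_if]$.
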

The first item of Proposition \ref{prop:keyestimate} should be compared to the guarantee in \cite[Corollary 13]{barthe2020spectral}. In that work, similar results were obtained for log-concave measures. Here, however, $\eta$ is not log-concave. Instead, we will obtain those symmetries through the choice of the functions $\we_j$. We use the fact that $\we_j>0$ to exclude eigenfunctions that are odd in two or more coordinates, and $\frac{\we_j'}{x_j}<0$ to exclude eigenfunctions that are even in every coordinate.
Using this extra information about symmetry then leads to the bound in the second item.

With Proposition~\ref{prop:keyestimate} at hand, we can now deduce
Theorem~\ref{thm:bounded}.

\begin{proof}[Proof of Theorem~\ref{thm:bounded}]
Set $c_*=\frac{1}{190}$ and assume that $C_{\mathrm P}(\eta)\leq \frac{S^2}{3}$,
otherwise there is nothing to prove.
By Lemma \ref{lem:diriforms}, $\lambda\geq \frac{1}{C_{\mathrm{P}}(\eta)} \geq \frac{3}{S^2}$, so both items of Proposition \ref{prop:keyestimate} apply, and we choose $f$ to be the eigenfunction guaranteed by the proposition.

A direct calculation shows that  $Ax_i=W_i$, and so since $A$ is self-adjoint
$$
\lambda\E_\eta[X_if]= \E_\eta\left[X_iAf\right] = \E_\eta[W_if].
$$
Apply the Cauchy-Schwarz inequality to the right hand side and use the second item of Proposition \ref{prop:keyestimate},
\begin{align*}
\lambda^2\E_\eta[X_if]^2
\le
\E_\eta\left[c_i\frac{W_if^2}{X_i^3}\right]
\E_\eta\left[\frac{W_iX_i^3}{c_i}\right]
\le
\frac{2\lambda^3\E_\eta[X_if]^2}{\E_\eta[c_i]}
\E_\eta\left[\frac{W_iX_i^3}{c_i}\right].
\end{align*}
Rearranging terms gives
\begin{equation} \label{eq:numdeno}
    \lambda\geq
\frac{\E_\eta[c_i]}
{2\E_\eta\left[\frac{W_iX_i^3}{c_i}\right]}.
\end{equation}
To bound the expectations involving $c_i$, we return to the expression  \eqref{eq:qchoices} from which we can see,
\[
 {c_i}=\left(2\left(1+\frac{X_i^2}{S^2}\right)\right)^{-1},
\]
and the right hand side is a convex function of $X^2_i$.
Hence, for the numerator of \eqref{eq:numdeno}, by Jensen's inequality,
$$
\E_\eta[c_i]
\geq\left(2\left(1+\frac{\E_\eta[X_i^2]}{S^2}\right)\right)^{-1}
\geq\frac38,
$$
since, by assumption, $\E_\eta[X_i^2]\leq C_{\mathrm{P}}(\eta) \leq \frac{S^2}{3}.$
As for the denominator of \eqref{eq:numdeno}, using the same identity, as above, for $c_i,$ gives
\begin{align*}
\E_\eta\left[\frac{W_iX_i^3}{c_i}\right] = 2\E_{\eta}\left[W_iX_i^3\right] + \frac{2}{S^2}\E_\eta[W_iX_i^5].
\end{align*}
We now recall that the definition of $c_i$ from \eqref{eq:qchoices} and use it to integrate by parts, so that
\begin{align*}
2\E_{\eta}\left[W_iX_i^3\right] + \frac{2}{S^2}\E_\eta[W_iX_i^5]=2\E_\eta[(3+c_i)X_i^2]
+\frac2{S^2}\E_\eta[(5+c_i)X_i^4]\leq 10\E_\eta[X_i^2]+\frac{14}{S^2}\E_\eta[X_i^4],
\end{align*}
where we used $c_i\leq 1$. To finish we apply the Poincar\'e inequality \eqref{eq:poincare} to the function $x^2_i$, so that because $\E_\eta[X_i^2]\leq C_{\mathrm{P}}(\eta)$, and because we assumed $C_{\mathrm{P}}(\eta) \leq \frac{S^2}{3}$,
$$
\E_\eta[X_i^4]
\le \E_\eta[X_i^2]^2
+4C_{\mathrm P}(\eta)\E_\eta[X_i^2]
\leq \frac{5}{3}S^2\E_\eta[X_i^2].
$$
Combining with the above now gives
\begin{align*}
\E_\eta\left[\frac{W_iX_i^3}{c_i}\right]
\leq10\E_\eta[X_i^2]+\frac{14}{S^2}\E_\eta[X_i^4] \leq 35\E_\eta[X_i^2].
\end{align*}
Plugging this back into \eqref{eq:numdeno} and using the lower bound for $\E_\eta[c_i]$ yields

$$
\lambda\geq\frac{1}{190\E_\eta[X_i^2]},
\implies
C_{\mathrm P}(\eta)\le\frac9\lambda
\leq\frac{9\|\mathrm{Cov}(\eta)\|_{\mathrm{op}}}{c_*},
$$
where the final inequality is Lemma \ref{lem:diriforms}.
\end{proof}

\subsection{Preliminaries for the proof of Proposition \ref{prop:keyestimate}}\label{sec:identities}
To finish the proof of Theorem \ref{thm:bounded}, we need to prove Proposition \ref{prop:keyestimate}. Here we collect some useful facts about the Dunkl Langevin operator. It is important to note that the results below are facilitated by our use of the weight $q$ in \eqref{eq:qchoices}. In particular, we shall heavily use the fact that the function $\we$ is \emph{strictly positive}, which demonstrates the need for the introduction of the auxiliary weight.

\subsubsection{An a priori spectral gap.} We first establish an a priori bound for unconditional test functions, which follows from simple integrations by parts. The role of the weight $q$ is to enable this initial bound. After establishing appropriate symmetries of the eigenfunction corresponding to $\lambda$ \eqref{eq:reflection-form}, we will be able to bootstrap this bound and obtain the final result.
\begin{lemma}\label{lem:weighted}
Let $W\in C^2(\R^n)$ be unconditional and satisfy
\eqref{eq:operator-assumptions} and let $\eta$ and $\lambda$ as in
\eqref{eq:auxiliary-density} and \eqref{eq:reflection-form}.
Let $g$ be an unconditional test function. Then, for any $i \in [n],$
$$\left(\lambda - \frac{3}{2S^2}\right)\left(\E_\eta[c_ig^2] - \frac{\E_\eta[c_ig]^2}{\E_\eta[c_i]}\right) \leq \E_{\eta}\left[c_i|\nabla g|^2\right].$$
\end{lemma}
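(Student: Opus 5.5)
The plan is to test the variational definition \eqref{eq:reflection-form} of $\lambda$ against a function that is \emph{even in every coordinate}. For such functions the Dunkl terms $D_jf$ vanish, so $\cE(f)=\E_\eta[|\nabla f|^2]$. The weight $c_i$ will be produced by multiplying by $\psi(x_i)$, where $\psi:=\sqrt{c}$, that is,
$$\psi(r)=\frac{S}{\sqrt{2(S^2+r^2)}}.$$
Concretely, for a constant $m$ to be chosen, I would set $f=\psi(x_i)\,(g-m)$. This $f$ is unconditional because $g$ is. The spectral gap then gives $\lambda\Var_\eta(f)\le \E_\eta[|\nabla f|^2]$.

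\textbf{Computing the energy.} Expanding the gradient,
$$|\nabla f|^2=c_i|\nabla g|^2+2\psi\psi'(g-m)\partial_i g+\psi'^2(g-m)^2.$$
The cross term equals $\psi\psi'\,\partial_i(g-m)^2$. I would integrate it by parts using $\partial_i\eta=\left(\frac{c_i}{x_i}-W_i\right)\eta$, as in Lemma \ref{lem:diriform}. This gives
$$\E_\eta[|\nabla f|^2]=\E_\eta[c_i|\nabla g|^2]+\E_\eta\left[(g-m)^2\Bigl(\psi'^2-(\psi\psi')'-\psi\psi'\tfrac{c_i}{x_i}+\psi\psi' W_i\Bigr)\right].$$
The sign structure is favourable. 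Since $\psi$ is decreasing in $|x_i|$, we have $x_i\psi\psi'\le 0$. Convexity and unconditionality of $W$ give $x_iW_i\ge 0$. Hence $\psi\psi'W_i=\frac{\psi\psi'}{x_i}\,x_iW_i\le 0$, and this term can be dropped. What remains is the bound
$$\psi'^2-(\psi\psi')'-\psi\psi'\frac{c_i}{x_i}\le \frac{3}{2S^2}\,c_i .$$
This is an explicit one-variable inequality in $t=x_i^2/S^2$, using $\psi\psi'=c'/2=-\frac{S^2x}{2(S^2+x^2)^2}$. I expect this to be the only genuinely computational step, and the one where the constant $\frac{3}{2}$ must come out correctly. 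First I would expand each term as a rational function of $S^2+x^2$ times $c$, and check that the maximum of their sum occurs at $x=0$.

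\textbf{Choosing $m$ and handling the variance.} I would choose $m$ so that $\E_\eta[\psi(g-m)]=0$, which is possible since $\E_\eta[\psi]>0$. Then
$$\Var_\eta(f)=\E_\eta[c_i(g-m)^2]\ge \E_\eta[c_ig^2]-\frac{\E_\eta[c_ig]^2}{\E_\eta[c_i]},$$
because $m'=\E[c_ig]/\E[c_i]$ minimizes $m'\mapsto\E[c_i(g-m')^2]$. Combining with the energy bound gives
$$\Bigl(\lambda-\frac{3}{2S^2}\Bigr)\E_\eta[c_i(g-m)^2]\le \E_\eta[c_i|\nabla g|^2].$$
If $\lambda\ge \frac{3}{2S^2}$, replacing $\E_\eta[c_i(g-m)^2]$ by the smaller weighted variance yields the claim. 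If $\lambda<\frac{3}{2S^2}$, the left-hand side of the lemma is nonpositive and there is nothing to prove.

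Finally, the boundary terms in the integration by parts at $x_i=0$ and at infinity need justification. Since $\psi$ is smooth and bounded and $g$ is a test function, this should follow from the domain results in Appendix \ref{sec:analyticdetails}.
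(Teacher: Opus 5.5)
Your proposal is correct and is essentially the paper's proof: the paper uses the same test function $\sqrt{c_i}\,(g-m)$ with $m=\E_\eta[\sqrt{c_i}\,g]/\E_\eta[\sqrt{c_i}]$ and the same integration by parts via $\partial_i\eta=(c_i/x_i-W_i)\eta$. It likewise drops the term involving $x_iW_i\ge 0$ and makes the same weighted-variance comparison; it merely organizes the expansion around $h=\sqrt{c_i}(g-m)$ rather than $g$. The one-variable computation you left pending does close: your bracket (without the $W_i$ term) equals $c_i\,\frac{3S^2-4x_i^2}{2(S^2+x_i^2)^2}\le\frac{3}{2S^2}\,c_i$, which is exactly the paper's expression.
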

Before proving the claim, we remark that one could define the measures $ d\nu_i=\frac{c_i}{\E_\eta[c_i]}d\eta$, in which case the conclusion of Lemma \ref{lem:weighted} is equivalent to 
$$\left(\lambda - \frac{3}{2S^2}\right)\mathrm{Var}_{\nu_i}(g) \leq \E_{\nu_i}\left[|\nabla g|^2\right],$$
for unconditional functions.
\begin{proof}
If $\lambda\leq \frac{3}{2S^2}$, then there is nothing to prove. 
Otherwise, fix $i$ and for an unconditional test function $g$, set
$$
h=\sqrt{c_i}\left(
g-\frac{\E_\eta[\sqrt{c_i}g]}
{\E_\eta[\sqrt{c_i}]}
\right).
$$
Then, since both $c_i$ and $g$ are unconditional, the same is true for $h$ and furthermore $\E_\eta[h]=0$,
so that for our claim it shall suffice to prove,
$$\left(\lambda-\frac{3}{2S^2}\right)\E_\eta[h^2] \leq \E_\eta[c_i|\nabla g|^2].$$
Indeed, this follows since, by opening up the definition of $h$, we can see,
$$\E_\eta[c_ig^2]-\frac{\E_\eta[c_ig]^2}{\E_{\eta}[c_i]}=\E_\eta[h^2] - \frac{\E_\eta[\sqrt{c_i}h]^2}{\E_{\eta}[c_i]}\leq \E_\eta[h^2].$$
As $h$ is unconditional, we have $D_jh = 0$ for every $j \in [n]$, and using  \eqref{eq:reflection-form} and the definition of $\lambda$, 
\begin{equation} \label{eq:dirichletbound}
\lambda\E_\eta[h^2]
\leq\cE(h)
=\E_\eta[|\nabla h|^2].
\end{equation}
Starting from the expressions in \eqref{eq:qchoices} we compute,
$$
\partial_i\sqrt{\we_i}
=-\frac{x_i}{S^2+x_i^2}\sqrt{\we_i},
$$
and furthermore, since $\we_i$ depends only on coordinate $i$,
$$
\partial_jh =\sqrt{\we_i}\partial_jg
\quad\text{ for } j\neq i, \quad \text{ and }\quad
\partial_ih=\sqrt{c_i}\partial_ig - \frac{x_i}{S^2+x_i^2}h.
$$
Consequently,
\begin{align*}
c_i|\nabla g|^2
=\sum_{j\neq i}(\partial_jh)^2
+\left(\partial_ih+\frac{x_i}{S^2+x_i^2}h\right)^2\
=|\nabla h|^2
+\frac{x_i}{S^2+x_i^2}\partial_i(h^2)
+\frac{x_i^2}{(S^2+x_i^2)^2}h^2.
\end{align*}
Note that
$$
\partial_i\left(\frac{x_i}{S^2+x_i^2}\right)
=\frac{S^2-x_i^2}{(S^2+x_i^2)^2}
\quad \text{ and }\quad
\partial_i\eta(x) = \left(\frac{c_i}{x_i}-W_i\right)\eta(x).
$$
So, integration by parts gives
\begin{align*}
\E_\eta\left[
\frac{X_i}{S^2+X_i^2}\partial_i(h^2)
\right]
&=-\E_\eta\left[
\left(
\frac{S^2-X_i^2}{(S^2+X_i^2)^2}
+\frac{c_i}{S^2+X_i^2}
-\frac{X_iW_i}{S^2+X_i^2}
\right)h^2
\right]\\
&=-\E_\eta\left[
\left(
\frac{S^2-X_i^2}{(S^2+X_i^2)^2}
+\frac{S^2}{2(S^2+X_i^2)^2}
-\frac{X_iW_i}{S^2+X_i^2}
\right)h^2
\right],
\end{align*}
where we also used $c_i=\frac{S^2}{2(S^2+x_i^2)}$ from \eqref{eq:qchoices}. 
Combining the above we now get,
\begin{align*}
\E_\eta[c_i|\nabla g|^2]
&=\E_\eta|\nabla h|^2
+\E_\eta\left[
\frac{X_i}{S^2+X_i^2}\partial_i(h^2)
+\frac{X_i^2}{(S^2+X_i^2)^2}h^2
\right]\\
&=\E_\eta|\nabla h|^2
+\E_\eta\left[
\left(
\frac{4X_i^2-3S^2}{2(S^2+X_i^2)^2}
+\frac{X_iW_i}{S^2+X_i^2}
\right)h^2
\right].
\end{align*}
Finally, since $W$ is convex unconditional, $W_iX_i \geq 0$, and therefore
$$\frac{4X_i^2-3S^2}{2(S^2+X_i^2)^2}
+\frac{X_iW_i}{S^2+X_i^2} \geq -\frac{3S^2}{2(S^2+X_i^2)^2} \geq -\frac{3}{2S^2}.$$
We now conclude by combining the above displays with \eqref{eq:dirichletbound},
\begin{align*}
\E_\eta[c_i|\nabla g|^2]
=\E_\eta|\nabla h|^2
+\E_\eta\left[
\left(
\frac{4X_i^2-3S^2}{2(S^2+X_i^2)^2}
+\frac{X_iW_i}{S^2+X_i^2}
\right)h^2
\right]\geq
\left(\lambda-\frac{3}{2S^2}\right)\E_\eta[h^2].
\end{align*}
\end{proof}

\subsubsection{Bochner-type identities}  To prove Proposition \ref{prop:keyestimate} we require some identities connecting potential symmetries of a function $f$ and the Dirichlet form of its derivatives $\cE(\partial_j f)$. These identities are similar to Bochner's formula for the Langevin operator, and we will apply them in a similar fashion to \cite{barthe2020spectral,klartag2009berry}. The computation is essentially a generalization of \cite{li2023dimension}, who considered the case when the functions $c_j$ are constants. 
\begin{lemma}\label{lem:identities}
If $f$ is even in every coordinate, then
\begin{equation}\label{eq:evenidentity}
 \E_\eta[(Af)^2]=\sum_j\cE(\partial_jf)
 +\E_\eta\left[\langle\nabla^2W\nabla f,\nabla f\rangle\right]
 +\sum_j\E_\eta\left[-\frac{c_j'}{x_j}(\partial_jf)^2\right].
\end{equation}
If $f$ is odd in $x_i$ and even in the other coordinates,
put $g=f/x_i$. %with its smooth extension at $x_i=0$. 
Then
\begin{align}\label{eq:oddidentity}
 \E_\eta[(Af)^2]={}&\sum_j\cE(\partial_jf)
 +\E_{\eta}\langle\nabla^2W\nabla f,\nabla f\rangle
 +\sum_{j\ne i}\E_\eta\left[-\frac{c_j'}{x_j}(\partial_jf)^2\right]\notag\\
 &+\E_\eta[c_i|\nabla g|^2]
 +\E_\eta[(2c_i-x_ic_i')(\partial_ig)^2]
 +\E_\eta\left[c_i\frac{W_i}{x_i}g^2\right].
\end{align}
%The identities also hold for $f\in\Dom(A)$ with the stated symmetries.
%Each $\partial_jf$ belongs to $\Dom(\cE)$. In the second identity, $v$
%is a limit of smooth even functions in the norm
%$(\E[c_iv^2]+\E[c_i|\nabla v|^2])^{1/2}$. Every displayed term is finite.
\end{lemma}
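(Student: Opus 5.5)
Both identities come from a Bochner computation: expand $\E_\eta[(Af)^2]=\cE(f,Af)$ via Lemma~\ref{lem:diriform} and commute $\partial_j$ past $A$. Split $A=L+\sum_j \frac{c_j}{x_j}D_j$, where $L=-\Delta+\langle\nabla W,\nabla\cdot\rangle-\sum_j\frac{c_j}{x_j}\partial_j$ is the Langevin operator of $\eta$. The first step is to compute the commutator $[\partial_k,A]$ on functions with the given parities. For the Langevin part,
$$\partial_k Lf=L\partial_kf+\sum_j W_{jk}\partial_jf-\partial_k\Big(\frac{c_k}{x_k}\Big)\partial_kf.$$
For the Dunkl part, use that $\partial_k$ commutes with $D_j$ for $j\neq k$ and that $c_j$ depends only on $x_j$.

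\textbf{Even case.} If $f$ is even in every coordinate, then $D_jf=0$, so $Af=Lf$. Hence $\E_\eta[(Af)^2]=\E_\eta[\langle\nabla f,\nabla Lf\rangle]=\sum_k\E_\eta[\partial_kf\,\partial_kLf]$. Substituting the commutator gives
$$\sum_k\E_\eta[\partial_kf\,L\partial_kf]+\E_\eta\langle\nabla^2W\nabla f,\nabla f\rangle-\sum_k\E_\eta\Big[\Big(\frac{c_k'}{x_k}-\frac{c_k}{x_k^2}\Big)(\partial_kf)^2\Big].$$
Now $\partial_kf$ is odd in $x_k$ and even in the other coordinates. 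Thus $D_k\partial_kf=\partial_kf/x_k$ and $D_j\partial_kf=0$ for $j\ne k$. Applying Lemma~\ref{lem:diriform} to $\partial_k f$ gives $\cE(\partial_kf)=\E_\eta[\partial_kf\,L\partial_kf]+\E_\eta[c_k(\partial_kf)^2/x_k^2]$. The term $c_k/x_k^2$ therefore reassembles exactly into $\cE(\partial_kf)$, and what remains is \eqref{eq:evenidentity}.

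\textbf{Odd case.} The same computation works for $k\ne i$ and produces the terms $\cE(\partial_jf)$, the Hessian term, and $-c_j'/x_j$ for $j\neq i$. The delicate coordinate is $k=i$, where the commutator of $\partial_i$ with $\frac{c_i}{x_i}D_i$ no longer vanishes. Here I will write $f=x_ig$ with $g$ unconditional. Then $D_if=g$ and $\frac{c_i}{x_i}D_if=\frac{c_i}{x_i}g$, so
$$Af=x_iAg-2\partial_ig+\Big(W_i-\frac{c_i}{x_i}\Big)g+\frac{c_i}{x_i}g=x_iLg-2\partial_ig+W_ig,$$
using $Lx_i=W_i-c_i/x_i$. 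Rather than tracking commutators in the $i$-th direction, I will expand $\E_\eta[(Af)^2]$ directly in terms of $g$. I will also expand the right-hand side of \eqref{eq:oddidentity} in terms of $g$, using $\partial_if=g+x_i\partial_ig$ and $\partial_jf=x_i\partial_jg$. Both sides can then be compared after integrating by parts in $x_i$ against the density, using $\partial_i\eta=(c_i/x_i-W_i)\eta$. The cross terms between $x_iLg$ and $W_ig$ should generate $\E_\eta[c_iW_ig^2/x_i]$. The term $-2\partial_ig$ together with the derivative of $c_i/x_i$ should produce the coefficient $2c_i-x_ic_i'$ on $(\partial_ig)^2$. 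The contribution $\E_\eta[c_i|\nabla g|^2]$ will arise from $\E_\eta[c_i(D_i\partial_jf)^2]=\E_\eta[c_i(\partial_jg)^2]$, together with the extra $(\partial_ig)^2$ piece hidden in $\cE(\partial_if)$.

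\textbf{Main obstacle.} I expect the main difficulty to be the bookkeeping in the $i$-th coordinate: keeping track of which terms from $\cE(\partial_if)$, given that $\partial_if$ is even while $D_j\partial_jf$ terms appear for $j\ne i$, cancel against the direct expansion, and checking that the boundary terms at $x_i=0$ vanish. The latter needs $g$ to be smooth and $c_i\eta/x_i\sim x_i^{-1/2}$ to be integrable, which holds since $q(r)\sim r^{1/2}$. Regularity and integrability of all terms I would take from Appendix~\ref{sec:analyticdetails}.
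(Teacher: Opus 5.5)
Your even case is correct and is essentially the paper's argument. You keep $L$ and absorb $\E_\eta[c_k(\partial_kf)^2/x_k^2]$ into $\cE(\partial_kf)$, whereas the paper folds it into the commutator $\partial_j(Af)=A\partial_jf+\langle\nabla W_j,\nabla f\rangle-\frac{c_j'}{x_j}\partial_jf$.

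The odd case, however, is only announced, and the bookkeeping you sketch is wrong. The quantity $\E_\eta[c_i(\partial_jg)^2]=\E_\eta[c_i(D_i\partial_jf)^2]$ is already a summand of $\cE(\partial_jf)$ on the right-hand side. It therefore cannot also supply the separate term $\E_\eta[c_i|\nabla g|^2]$; the right-hand side really contains $\E_\eta[c_i(\partial_jg)^2]$ twice for $j\neq i$, which is what Step~6 of the proof of Proposition~\ref{prop:keyestimate} exploits. Likewise, nothing extra is hidden in $\cE(\partial_if)=\E_\eta|\nabla\partial_if|^2$.

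The missing idea is that in $\E_\eta[(Af)^2]=\cE(f,Af)$ the Dunkl cross term $\E_\eta[c_iD_if\,D_i(Af)]$ survives. Since $R_iA=AR_i$, $Af$ is odd in $x_i$, so this term equals $\E_\eta[c_ig\,Af/x_i]$. Inserting $Af=x_iAg-2\partial_ig+W_ig$ together with $\E_\eta[c_igAg]=\cE(g,c_ig)=\E_\eta[c_i|\nabla g|^2]+\E_\eta[c_i'g\,\partial_ig]$ yields
\[
\E_\eta[c_i|\nabla g|^2]+\E_\eta\Big[\Big(c_i'-\frac{2c_i}{x_i}\Big)g\,\partial_ig\Big]+\E_\eta\Big[c_i\frac{W_i}{x_i}g^2\Big].
\]
The $i$-th commutator you wanted to avoid is short. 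Since $\partial_if$ is unconditional, $A\partial_if=L\partial_if$, and differentiating $Af=Lf+\frac{c_i}{x_i^2}f$ gives $\partial_i(Af)=A\partial_if+\langle\nabla W_i,\nabla f\rangle+(\frac{2c_i}{x_i}-c_i')\partial_ig$. The two leftover terms then combine pointwise, through $\partial_if-g=x_i\partial_ig$, into $(2c_i-x_ic_i')(\partial_ig)^2$.

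This pointwise cancellation is essential, because your plan of integrating by parts in $x_i$ and discarding boundary terms at $x_i=0$ breaks down on exactly such terms. Integrability of $c_i\eta/x_i\sim|x_i|^{-1/2}$ says nothing about boundary values. Suppose you write $\E_\eta[\frac{c_i}{x_i}g\,\partial_ig]=\frac12\E_\eta[\frac{c_i}{x_i}\partial_i(g^2)]$ and integrate by parts. The boundary term $\frac{c_i}{x_i}g^2\eta$ blows up like $|x_i|^{-1/2}$ as $x_i\to0$ wherever $g\neq0$. The new integrand $\partial_i(c_i\eta/x_i)g^2\sim|x_i|^{-3/2}g^2$ is not integrable either. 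Only integrations by parts whose boundary term is the vanishing factor $\eta\sim|x_i|^{1/2}$ times a bounded function are legitimate, as in Lemma~\ref{lem:diriform}. The $\frac{c_i}{x_i}$-weighted terms must therefore be cancelled algebraically, as above.

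Moreover, expanding $\E_\eta[(x_iLg-2\partial_ig+W_ig)^2]$ itself would require a weighted Bochner formula for $\E_\eta[x_i^2(Lg)^2]$, so the commutator tracking does not actually go away. As written, the odd identity \eqref{eq:oddidentity} is not established.
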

\begin{proof}
Suppose first that $f$ is even in the $j^{th}$ coordinate. Then, $R_jf = f,$ and furthermore $\partial_jf$ is odd in the $j^{th}$ coordinate, and so
$$
R_j(\partial_jf)=-\partial_jf
\quad \text{ and }\quad 
\partial_j(R_kf)=R_k(\partial_jf)\text{ for } k\neq j.
$$
Substituting $\partial_jf$ into the definition of $A$ in \eqref{eq:DunkelRegu} gives
\begin{align*}
A(\partial_jf)
={}&-\Delta(\partial_jf)
+\langle\nabla W,\nabla\partial_jf\rangle
-\sum_k\frac{c_k}{x_k}\partial_{kj}f\\
&+\frac{c_j}{x_j^2}\partial_jf
+\sum_{k\ne j}\frac{c_k}{2x_k^2}
\bigl(\partial_jf-R_k(\partial_jf)\bigr).
\end{align*}
On the other hand, differentiating $Af$ gives
\begin{align*}
\partial_j(Af)
={}&-\Delta(\partial_jf)
+\langle\nabla W,\nabla\partial_jf\rangle
+\langle\nabla W_j,\nabla f\rangle
-\sum_k\frac{c_k}{x_k}\partial_{kj}f\\
&+\left(\frac{c_j}{x_j^2}-\frac{c_j'}{x_j}\right)\partial_jf
+\sum_{k\ne j}\frac{c_k}{2x_k^2}
\bigl(\partial_jf-R_k(\partial_jf)\bigr).
\end{align*}
Subtracting these expressions, we obtain
\begin{equation} \label{eq:commrelation}
\partial_j(Af)
=A(\partial_jf)
+\langle\nabla W_j,\nabla f\rangle
-\frac{c_j'}{x_j}\partial_jf.
\end{equation}

Now suppose that $f$ is even in every coordinate.
Then $D_jf=0$ for every $j$, so Lemma \ref{lem:diriform} and the preceding
identity give
\begin{align*}
 \E_\eta[(Af)^2]
&=\cE(f,Af)
=\sum_j\E_\eta[\partial_jf\,\partial_j(Af)]\\
&=\sum_j\E_\eta[\partial_jf\,A(\partial_jf)]
+\E_\eta[\langle\nabla^2W\nabla f,\nabla f\rangle]
+\sum_j\E_\eta\left[-\frac{c_j'}{X_j}(\partial_jf)^2\right]\\
&=\sum_j\cE(\partial_jf)
+\E_\eta[\langle\nabla^2W\nabla f,\nabla f\rangle]
+\sum_j\E_\eta\left[-\frac{c_j'}{X_j}(\partial_jf)^2\right],
\end{align*}
where we used $\sum\limits_j \E_{\eta}\left[\partial_jf\langle \nabla W_j,\nabla f\rangle\right] = \sum\limits_{j,k}\E_{\eta}\left[\partial_jf\partial_{kj} W\partial_k f\right] =\E_\eta[\langle\nabla^2W\nabla f,\nabla f\rangle].$
This proves the first identity.

Suppose now that $f$ is odd in the $i^{th}$ coordinate
and even in every other coordinate. Put $g=f/x_i$.
Then both $g$ and $\partial_if$ are unconditional.
Substituting $\partial_if$ into the definition of $A$ gives
$$
A(\partial_if)
=-\Delta(\partial_if)
+\langle\nabla W,\nabla\partial_if\rangle
-\sum_k\frac{c_k}{x_k}\partial_{ki}f.
$$
Since $R_if=-f$ and $R_kf=f$ for $k\ne i$,
differentiating $Af$ gives
\begin{align*}
\partial_i(Af)
={}&-\Delta(\partial_if)
+\langle\nabla W,\nabla\partial_if\rangle
+\langle\nabla W_i,\nabla f\rangle
-\sum_k\frac{c_k}{x_k}\partial_{ki}f\\
&+\left(\frac{2c_i}{x_i^2}-\frac{c_i'}{x_i}\right)\partial_if
+\left(\frac{c_i'}{x_i^2}-\frac{2c_i}{x_i^3}\right)f.
\end{align*}
Subtracting these expressions and using
$f=x_ig$ and $\partial_if=g+x_i\partial_ig$, we obtain the analog of \eqref{eq:commrelation}
$$
\partial_i(Af)
=A(\partial_if)
+\langle\nabla W_i,\nabla f\rangle
+\left(\frac{2c_i}{x_i}-c_i'\right)\partial_ig.
$$
For $j\ne i$, \eqref{eq:commrelation} still applies because
$f$ is even in coordinate $j$.

Observe that since $W$ is unconditional and the functions $c_k$ are symmetric, we have the commutation relation $R_iA=AR_i$. Thus, $R_if=-f$ implies
$$
R_i(Af)=A(R_if)=-Af.
$$
By the definition of $D_i$,
$$
D_i(Af)=\frac{Af-R_i(Af)}{2x_i}=\frac{Af}{x_i}.
$$
Furthermore, $D_if= \frac{f-R_if}{2x_i} = \frac{f}{x_i} = g$ and $D_jf=0$ for $j\ne i$,
and Lemma \ref{lem:diriform} gives
\begin{align*}
 \E_\eta[(Af)^2]
&=\cE(f,Af)
=\sum_j\E_\eta[\partial_jf\,\partial_j(Af)]
+\E_\eta\left[c_ig\frac{Af}{X_i}\right]\\
&=\sum_j\cE(\partial_jf)
+\E_\eta[\langle\nabla^2W\nabla f,\nabla f\rangle]
+\sum_{j\ne i}\E_\eta\left[-\frac{c_j'}{X_j}(\partial_jf)^2\right]\\
&\qquad
+\E_\eta\left[
\left(\frac{2c_i}{X_i}-c_i'\right)\partial_if\,\partial_ig
\right]
+\E_\eta\left[c_ig\frac{Af}{X_i}\right].
\end{align*}
To compute the last term, we use $f=x_ig$ so that
\begin{align*}
Af
&=x_i\left(
-\Delta g+\langle\nabla W,\nabla g\rangle
-\sum_k\frac{c_k}{x_k}\partial_kg
\right)
-2\partial_ig+W_ig-\frac{c_i}{x_i}g+\frac{c_i}{x_i}g\\
&=x_iAg-2\partial_ig+W_ig,
\end{align*}
where the last equality uses the unconditionality of $g$.
Moreover, $c_ig$ is unconditional, so Lemma \ref{lem:diriform} gives
\begin{align*}
\E_\eta[c_igAg]
&=\cE(g,c_ig)
=\E_\eta[\langle\nabla g,\nabla(c_ig)\rangle]=\E_\eta[c_i|\nabla g|^2]
+\E_\eta[c_i'g\,\partial_ig].
\end{align*}
Consequently,
$$
\E_\eta\left[c_ig\frac{Af}{X_i}\right]
=\E_\eta[c_i|\nabla g|^2]
+\E_\eta\left[
\left(c_i'-\frac{2c_i}{X_i}\right)g\,\partial_ig
\right]
+\E_\eta\left[c_i\frac{W_i}{X_i}g^2\right].
$$
Finally, since $\partial_if-g=x_i\partial_ig$,
\begin{align*}
\left(\frac{2c_i}{x_i}-c_i'\right)\partial_if\,\partial_ig
+\left(c_i'-\frac{2c_i}{x_i}\right)g\,\partial_ig
=\left(\frac{2c_i}{x_i}-c_i'\right)
(\partial_if-g)\partial_ig
=(2c_i-x_ic_i')(\partial_ig)^2.
\end{align*}
Substituting into the expression for $ \E_\eta[(Af)^2]$ proves
\begin{align*}
 \E_\eta[(Af)^2]
={}&\sum_j\cE(\partial_jf)
+\E_\eta[\langle\nabla^2W\nabla f,\nabla f\rangle]
+\sum_{j\ne i}\E_\eta\left[-\frac{c_j'}{X_j}(\partial_jf)^2\right]\\
&+\E_\eta[c_i|\nabla g|^2]
+\E_\eta[(2c_i-X_ic_i')(\partial_ig)^2]
+\E_\eta\left[c_i\frac{W_i}{X_i}g^2\right].
\end{align*}

\end{proof}

\subsection{Proof of Proposition~\ref{prop:keyestimate}}\label{sec:parity}
\begin{proof}[Proof of Proposition~\ref{prop:keyestimate}]

We conduct the proof in several parts. We first identify the eigenfunction $f$ and show that we can choose it to have the declared symmetries and monotonicity properties. We will then use these properties to establish the lower bound on $\lambda.$

\textbf{Step 1: Choose an $\lambda$-eigenfunction of $A$ that in each coordinate is either odd or even.}
The fact that $\lambda > 0$ and that there is a corresponding eigenfunction $f$ is justified in the appendix in Lemma \ref{lem:closure}. To show that we can choose $f$ to have the appropriate symmetries we effectively use the arguments from the proofs of \cite[Corollary 2(i)] {klartag2009berry} and \cite[Corollary 13]{barthe2020spectral} and adapt them to our setting. The key observation is that for each $j \in[n]$, the reflection operator $R_j, j \in [n]$ commutes with $A$ and the same holds for the projections
\[
 P_j^+=\frac{I+R_j}{2},\qquad P_j^-=\frac{I-R_j}{2}.
\]
Note that $P_j^+f$ is even in the $j^{th}$ coordinate, while $P_j^-f$ is odd and that the family of operators $\{P_j^+,P^-_j\}_{j\in[n]}$ commute with each other.
Consider now the $2^n$ products of these projections, 
$$P_{\varepsilon}=\prod_{j=1}^n P^{\varepsilon_j}_j,\qquad \varepsilon \in \{-,+\}^n,$$
for which we have
$$f = \sum\limits_{\varepsilon}P_{\varepsilon}f.$$
%which decomposes any function $f$ into $2^n$ components $(P_{\varepsilon}(f))_{\varepsilon \in \{-,+\}^n}$ which are orthogonal under $\mathcal{L}^2(\mu)$. 
Since $f\neq 0$ is non-zero, there is at least one $\varepsilon_0$ such that $P_{\varepsilon_0}f \neq 0$. Since  $P_{\varepsilon_0}$ commutes with $A$, $P_{\varepsilon_0}f$ is also a $\lambda$-eigenfunction. 

We may therefore indeed choose a $\lambda$-eigenfunction $f$ of $A$ such that, for each $j \in [n]$, either $R_jf=f$, i.e., it is even with respect to the $j^{th}$ coordinate, or $R_jf=-f$ i.e., it is odd with respect to the $j^{th}$ coordinate. Normalizing the function we may also assume $\E_\eta[ f^2]=1$. Finally, since $\lambda>0$ and constants lie in the kernel of $A$, by self-adjointness of $A$ we have $\E_\eta[f]=0$.

\textbf{Step 2: Rule out two or more odd coordinates.}
Let $f$ be the eigenfunction from the previous step, we will now rule out the possibility that it is odd in more than one coordinate. Towards a contradiction, suppose that $f$ is odd in two distinct coordinates $i$ and $j$, and define the function,
\[
 g=\sgn(x_i)f.
\]
Then $g$ is even in coordinate $i$ and remains odd in coordinate
$j$. In particular, $\E_\eta[g]=0$, and since $g^2 = f^2$ we also have $\E_\eta[g^2] = \E_{\eta}[f^2] = 1.$
Thus, we can arrive at a contradiction by showing $\mathcal{E}(g) < \mathcal{E}(f) = \lambda.$ To estimate $\cE(g)$ we first compute,
$$
\nabla g=\sgn(x_i)\nabla f
\quad \text{ and } \quad
D_i g
=\sgn(x_i)\frac{f+R_i f}{2x_i}
=0,
$$
and, for $k\neq i$,
$$
D_k g
=\frac{\sgn(x_i)f-\sgn(x_i)R_kf}{2x_k}
=\sgn(x_i)D_kf.
$$
Since $D_if=\frac{f}{x_i}$, these identities give
\begin{align*}
\cE(g)
&=\E_\eta\left[|\nabla g|^2\right]
  +\sum_{k=1}^n\E_\eta\left[c_k(D_kg)^2\right] =\E_\eta\left[|\nabla f|^2\right]
  +\sum_{k\neq i}\E_\eta\left[c_k(D_kf)^2\right]\\
&=\cE(f)-\E_\eta\left[c_i\frac{f^2}{x_i^2}\right]
<\lambda.
\end{align*}
The inequality is strict because $c_i>0$ almost everywhere. Thus we have arrived at a contradiction, and can conclude that $f$ cannot be odd in more than a single coordinate.

\textbf{Step 3: Rule out all even coordinates.}
Suppose next that $f$ is even in every coordinate. In other words, $D_jf = 0$ for every $j \in [n]$, which, as in \eqref{eq:reflection-form} shows
$$\E_\eta\left[|\nabla f|^2\right] = \cE(f) = \lambda.$$
Furthermore, by definition of $\lambda$, we have $\cE(\partial_jf)\geq
\lambda\E_\eta[(\partial_jf)^2]$, which together with \eqref{eq:evenidentity} shows,
\begin{align*}
0=\norm{Af}_2^2-\lambda\E_\eta|\nabla f|^2\geq
\E_\eta\left[\ip{\nabla^2W\nabla f}{\nabla f}\right]
+\sum_{j=1}^n
\E_\eta\left[-\frac{c_j'}{x_j}(\partial_jf)^2\right].
\end{align*}
Recall that $q(r)=\left(\frac{r}{\sqrt{S^2+r^2}}\right)^{1/2}$, and as in \eqref{eq:qchoices}
$$
-\frac{c_j'}{x_j}
=\frac{S^2}{(S^2+x_j^2)^2}.
$$
Consequently, since
$\nabla^2W\succeq0$, and $f$ is not constant,
$$
0\geq
\sum_{j=1}^n
\E_\eta\left[
\frac{S^2}{(S^2+x_j^2)^2}(\partial_jf)^2
\right]>0,
$$
a contradiction. Thus $f$ is odd in exactly one coordinate, which we henceforth denote by $i$, and even in all the others. 

\textbf{Step 4: Choosing $f$ so that $f\cdot x_i\geq0$.}
For our eigenfunction $f$, define, in a similar fashion to before,
\[
 g=\sgn(x_i)|f|.
\]
Note that $g$ is odd only in coordinate $i$ and even in the other coordinates. So, the exact same arguments in Step $2$ show that
\[
 \E_\eta[g]=0,\qquad \E_\eta[g^2] = \E_\eta[f^2]=1,\qquad
 \lambda\le\cE(g)\leq\cE(f)=\lambda.
\]
Thus $g$ is also a $\lambda$-eigenfunction and $x_ig = |x_i|\cdot|f|\geq0,$ and we now can replace $f$ with $g$. This concludes the proof of the first item in Proposition \ref{prop:keyestimate}. The next two parts are concerned with the second item.

\textbf{Step 5: An integrated monotonicity property.}
We now prove that $\E_\eta[\partial_i f]\geq0$.
Since $Af=\lambda f$, we have,
\begin{equation} \label{eq:forfinal}
\lambda\E_\eta[X_i f] = \E_\eta[X_i Af]
=\cE(f,x_i)
=\E_\eta[\partial_i f]
+\E_\eta\left[c_i\frac f{X_i}\right],
\end{equation}
where we used $D_ix_i = 1$, and $D_if = \frac{f}{x_i}$, as well as $D_jx_i = D_jf = 0$ when $j \neq i$.
Since $\frac{f}{x_i}\geq0$, it remains to prove
\begin{equation} \label{eq:toshow2}
    \E_\eta\left[c_i\frac f{X_i}\right]
\leq\lambda\E_\eta[X_i f].
\end{equation}
For $t>0$, set
$$
\phi_t(x)=\frac{x_i}{\sqrt{x_i^2+t^2}}.
$$
Substituting in \eqref{eq:DunkelRegu},
we obtain
\begin{equation}\label{eq:smoothsign}
A\phi_t
=\frac{3t^2x_i}{(x_i^2+t^2)^{5/2}}
+\frac{t^2W_i}{(x_i^2+t^2)^{3/2}}
+\frac{c_ix_i}{(x_i^2+t^2)^{3/2}}.
\end{equation}
Since $W$ is convex and unconditional,
$x_iW_i(x) \geq 0$. Together with $x_if \geq 0$,
this implies $fW_i\geq0$. So for $x_i\neq0$,
$
\frac{W_i(x)}{x_i}
\geq0.
$
 Therefore, since $A$ is self-adjoint,
$$
\lambda\E_\eta[f\phi_t]
=\E_\eta[fA\phi_t]
\geq
\E_\eta\left[
\frac{c_i|X_i||f|}{(X_i^2+t^2)^{3/2}}
\right].
$$
Taking $t\to 0$, and applying the dominated convergence theorem on the left side, and the monotone convergence theorem on the right side yields
\begin{equation}\label{eq:signinverse}
\E_\eta\left[\frac{c_i|f|}{X_i^2}\right]
\leq\lambda\E_\eta\left[|f|\right].
\end{equation}
One can verify that for our function $\we$, the function $r \to \frac{\we(r)}{r^2}$
is decreasing. Hence
$$
(|x_i|-|y_i|)
\left(
\frac{c_i(x)}{x_i^2}-\frac{c_i(y)}{y_i^2}
\right)\leq0 \implies \frac{c_i(x)}{|x_i|} + \frac{c_i(y)}{|y_i|} \leq \frac{|y_i|c_i(x)}{x_i^2} + \frac{|x_i|c_i(y)}{y_i^2}.
$$
We now use this inequality to show \eqref{eq:toshow2}. Indeed,
\begin{align*}
&2\E_\eta\left[|f|\right]\,
  \E_\eta\left[c_i\frac f{X_i}\right] = 2\int |f(y)|d\eta(y)\int c_i(x)\frac{|f(x)|}{|x_i|}d\eta(x)\\
  &=2\iint |f(x)f(y)|\frac{c_i(x)}{|x_i|}d\eta(x)d\eta(y)\\
&=\iint |f(x)f(y)|
\left(\frac{c_i(x)}{|x_i|}
     +\frac{c_i(y)}{|y_i|}\right)
\,d\eta(x)\,d\eta(y)\\
&\leq\iint |f(x)f(y)|
\left(\frac{|y_i|c_i(x)}{x_i^2}
     +\frac{|x_i|c_i(y)}{y_i^2}\right)
\,d\eta(x)\,d\eta(y)\\
&=2
  \E_\eta\left[\frac{c_i|f|}{X_i^2}\right]\E_\eta[X_i f]\leq2\lambda\E_\eta\left[|f|\right]\E_\eta[X_i f],
\end{align*} 
where the last inequality is \eqref{eq:signinverse}.
Dividing by $2\E_\eta\left[|f|\right]$ shows \eqref{eq:toshow2} from which we deduce $\E_\eta[\partial_if]\geq 0$.

\textbf{Step 6: Apply the spectral gap to the derivatives.}
For $j\neq i$, the derivative $\partial_jf$ is odd in
coordinates $i$ and $j$. Consequently,
$\sgn(x_i)\partial_jf$ has mean zero.
The calculation from Step 2 gives
\begin{align*}
\lambda\E_\eta[(\partial_jf)^2]
\leq\cE\left(\sgn(x_i)\partial_jf\right)=\cE(\partial_jf)
-\E_\eta\left[
c_i\left(\frac{\partial_jf}{X_i}\right)^2
\right] =\cE(\partial_jf)
-\E_\eta\left[
c_i\left(\partial_j\left(\frac f{x_i}\right)\right)^2
\right].
\end{align*}
Additionally, for the derivative in the $i^{th}$ coordinate, by definition of $\lambda$, we have
$$
\cE(\partial_if)
\geq\lambda\left(\E_\eta[(\partial_if)^2]
-\E_\eta[\partial_if]^2\right).
$$
We would like to plug this inequality into \eqref{eq:oddidentity}. To set the stage for this, we first note that, for our choice of $c_i$, it is straightforward to verify that $2c_i - x_ic_i' \geq c_i$, and hence,
$$\E_\eta\left[
(2c_i-x_ic_i')\left(\partial_i\left(\frac f{x_i}\right)\right)^2
\right]+\sum\limits_{j\neq i}\E_\eta\left[
c_i\left(\partial_j\left(\frac f{x_i}\right)\right)^2
\right]\geq \E_\eta\left[
c_i\left|\nabla\frac f{x_i}\right|^2
\right].$$
Combining the above inequalities into \eqref{eq:oddidentity}, and using $\nabla^2W\succeq0$ as well as $-\frac{c_j'}{x_j}\geq0$,
we obtain
\begin{align*}
\lambda^2
\geq{}&
\lambda\E_\eta\left[|\nabla f|^2\right]
-\lambda\bigl(\E_\eta[\partial_if]\bigr)^2
+2\E_\eta\left[c_i \left|\nabla \left(\frac{f}{x_i}\right)\right|^2\right]+\E_\eta\left[c_i\frac{W_i\,f^2}{X_i^3}\right].
\end{align*}
Since we also have
$$
\lambda=\cE(f)
=\E_\eta\left[|\nabla f|^2\right]
+\E_\eta\left[c_i\frac{f^2}{X_i^2}\right],
$$
it follows that
\begin{equation}\label{eq:doubleenergy}
\lambda\E_\eta[\partial_if]^2
+\lambda\E_\eta\left[c_i\frac{f^2}{X_i^2}\right] \geq 2\E_\eta\left[
c_i\left|\nabla(\frac f{x_i})\right|^2
\right]
+\E_\eta\left[c_i\frac{W_if^2}{X_i^3}\right].
\end{equation}
Finally, we apply Lemma \ref{lem:weighted} to $\frac{f}{x_i}$, which is even in all coordinates. Since we aim to prove \eqref{eq:Jupper} we may assume that $\lambda \geq \frac{3}{S^2} \implies \lambda-\frac{3}{2S^2}\geq\frac{\lambda}{2}$, and so
$$
2\mathbb E_\eta\left[
c_i\left|\nabla\!\left(\frac f{x_i}\right)\right|^2
\right]
\geq
\lambda\mathbb E_\eta\left[c_i\frac{f^2}{X_i^2}\right]
-\frac{\lambda}{\mathbb E_\eta[c_i]}
\mathbb E_\eta\left[c_i\frac f{X_i}\right]^2. 
$$
Substituting into \eqref{eq:doubleenergy}, we obtain
$$
\lambda\mathbb E_\eta[\partial_if]^2
+\frac{\lambda}{\mathbb E_\eta[c_i]}
\mathbb E_\eta\left[c_i\frac f{X_i}\right]^2\geq \mathbb E_\eta\left[c_i\frac{W_if^2}{X_i^3}\right].
$$
Since \(0<\mathbb E_\eta[c_i]\leq1/2\), we also have
$$
\begin{aligned}
\mathbb E_\eta\left[c_i\frac{W_i\,f^2}{X_i^3}\right]
&\leq
\frac{\lambda}{\mathbb E_\eta[c_i]}
\left(
\mathbb E_\eta[\partial_if]^2
+\mathbb E_\eta\left[c_i\frac f{X_i}\right]^2
\right)\\
&\leq
\frac{\lambda}{\mathbb E_\eta[c_i]}
\left(
\mathbb E_\eta[\partial_if]
+\mathbb E_\eta\left[c_i\frac f{X_i}\right]
\right)^2=
\frac{\lambda^3\bigl(\mathbb E_\eta[X_if]\bigr)^2}
{\mathbb E_\eta[c_i]}.
\end{aligned}
$$
The second inequality follows from $\E_\eta[\partial_if],\E_\eta[c_i\frac{f}{X_i}]\geq 0$, established in the previous steps, and the final equality is \eqref{eq:forfinal}. Rearranging terms then leads to \eqref{eq:Jupper} which finishes the proof.
\end{proof}

\appendix
\section{Operator domains and approximation}\label{sec:analyticdetails}

For the auxiliary measure in Theorem~\ref{thm:bounded}, we assume that
$W\in C^2(\mathbb R^n)$ is unconditional, and that it satisfies \eqref{eq:operator-assumptions}. I.e., for some
$K,\kappa_0>0$ and $B_0\ge0$,
\begin{equation*}
 0\preceq\nabla^2W\preceq KI,
 \qquad x\cdot\nabla W(x)\ge\kappa_0|x|^2-B_0.
\end{equation*}
We immediately note the following consequences of \eqref{eq:operator-assumptions}:
First, if $\theta \in \R^n$ is a unit vector, we can integrate the second inequality along $t\to t\theta$, and obtain for $r>1$
\begin{equation} \label{eq:subgaus}
    W(r\theta)\ge W(\theta)+\frac{\kappa_0}{2}(r^2-1)-B_0\log r.
\end{equation}
Consequently $W(x)\ge\kappa_0|x|^2/4-b$ for some finite $b$,
so $\eta$ has Gaussian tails. Unconditionality also gives
\begin{equation}\label{eq:nablaWbound}
     \frac{W_j(x)}{x_j}
 =\int_0^1 W_{jj}(x_1,\ldots,tx_j,\ldots,x_n)\,dt\in[0,K],
 \qquad |\nabla W(x)|\le K|x|.
\end{equation}

We will use these properties to establish that $A$ is a self-adjoint operator on its domain. We use $q,\we$ for the functions defined in \eqref{eq:qchoices} and $A$ is the Dunkl Langevin operator from \eqref{eq:DunkelRegu}, with $\cE$ the Dirichlet form, as in Lemma \ref{lem:diriform}. 

\subsection{The self-adjoint operator}\label{sec:operator-domain}
Our first order of business is to show that $A$ is a self-adjoint operator on its domain and that it has a well-defined spectral gap. Such results are available in the literature, mostly applying to the Dunkl Laplacian. Since those results do not seem to apply directly to the specific operator that we've defined, the proof goes by reducing our operator $A$ to the Dunkl Laplacian.

\begin{lemma} \label{lem:closure}
    Let $A$ be as in \eqref{eq:DunkelRegu} and suppose that $W$ satisfies \eqref{eq:operator-assumptions}. Then, the closure of $A$ on $C_c^{\infty}(\R^n)$ in $L^2(\eta)$ is a non-negative self-adjoint operator, which we again denote by $A$. Moreover, $A$ has a discrete spectrum and 
    $$\mathrm{ker}(A) = \{\textnormal{constant functions}\}.$$
\end{lemma}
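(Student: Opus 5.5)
The plan is to reduce $A$ to a direct sum of ordinary Schrödinger-type operators, one for each parity sector, and to prove self-adjointness, discreteness and the kernel statement sector by sector.

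\textbf{Parity decomposition.} The commuting projections $P_\varepsilon$, $\varepsilon\in\{-,+\}^n$, split $L^2(\eta)$ orthogonally into $2^n$ subspaces $H_\varepsilon$. Since $R_jA=AR_j$ and $C_c^\infty$ is invariant under each $P_\varepsilon$, the closure of $A$ is the direct sum of the closures of $A_\varepsilon:=A|_{P_\varepsilon C_c^\infty}$. It therefore suffices to treat each sector separately. On $H_\varepsilon$, write $O=\{j:\varepsilon_j=-\}$. Then $A$ acts as the weighted Langevin operator of $\eta$ plus the potential $\sum_{j\in O} c_j/x_j^2$, because $(f-R_jf)/(2x_j^2)=f/x_j^2$ for odd $j$ and vanishes for even $j$.

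\textbf{Reduction to the positive orthant and essential self-adjointness.} Restrict to the positive orthant $\R_+^n$ with measure $e^{-W}\prod_j q(x_j)\,dx$. Apply the ground-state transform $f=\rho^{-1/2}u$, where $\rho=e^{-W}\prod_j q$. This turns $A_\varepsilon$ into a Schrödinger operator $-\Delta+U_\varepsilon$ on $L^2(\R_+^n,dx)$. Its potential is
\[
U_\varepsilon=\tfrac14|\nabla\log\rho|^2-\tfrac12\Delta\log\rho+\sum_{j\in O}c_j/x_j^2 .
\]
Near a face $x_j=0$, with $c_j\to 1/2$ and $\log q\approx\frac12\log x_j$, the singular part of $U_\varepsilon$ behaves like $\alpha_j/x_j^2$. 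Here $\alpha_j=\frac{1}{16}-\frac14=-\frac{3}{16}$ for even $j$, and $\alpha_j=-\frac{3}{16}+\frac12=\frac{5}{16}$ for odd $j$. These coefficients lie in the limit-circle range $\alpha<3/4$. So $C_c^\infty$ of the open orthant is not a core, and the boundary condition must come from the parity-extended test functions. This is the Dunkl Laplacian picture with multiplicity $k=1/4$ near the walls. I would invoke the known essential self-adjointness of the Dunkl harmonic oscillator / Dunkl Laplacian with multiplicity $k\ge 0$ on $C_c^\infty(\R^n)$ (see the references on Dunkl operators). Near the walls, $A$ coincides with that operator, modulo a bounded-below perturbation that is smooth up to the walls. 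The perturbation comes from $\nabla W$, which grows at most linearly by \eqref{eq:nablaWbound}, and from $c_j-\frac12=O(x_j^2/S^2)$. At infinity, $U_\varepsilon\gtrsim \frac14|\nabla W|^2-\frac{nK}{2}-C\ge \frac{\kappa_0^2}{8}|x|^2-C'$ by \eqref{eq:operator-assumptions}. The standard criterion for $-\Delta+U$ with $U$ bounded below at infinity then gives essential self-adjointness away from the walls. A partition of unity glues the two regimes. Alternatively, one can avoid citing Dunkl theory and argue directly: show that the Friedrichs extension of the closed form $\cE$ on the closure of $C_c^\infty$ agrees with the operator closure. This amounts to proving that $\mathrm{Ran}(A+1)$ is dense, using elliptic regularity for a weak solution of $(A+1)u=0$ on each sector together with a Hardy cutoff argument near the walls, based on the Hardy bound in Lemma \ref{lem:diriforms}.

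\textbf{Discreteness and kernel.} Non-negativity follows from Lemma \ref{lem:diriform}. For a discrete spectrum, I would show that the form domain embeds compactly into $L^2(\eta)$. By Lemma \ref{lem:diriforms}, $\cE(f)\le 9\E_\eta|\nabla f|^2$, and conversely $\cE(f)\ge\E_\eta|\nabla f|^2$. Hence it suffices that $\{f:\E_\eta[f^2]+\E_\eta|\nabla f|^2\le1\}$ is precompact. Tightness comes from the confining potential $U_\varepsilon\to\infty$ (equivalently a Lyapunov function $|x|^2$, using $x\cdot\nabla W\ge\kappa_0|x|^2-B_0$). Local compactness comes from Rellich on balls, since the weight $\prod q$ is locally integrable and bounded. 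Because $q$ degenerates on the walls, I would handle this near the walls by working in the variables $x\mapsto T_S(x)$, or by using the weighted Rellich theorem for $A_2$-type weights $|x_j|^{1/2}$. For the kernel: $\cE(f)=0$ forces $\nabla f=0$ a.e. Since $\eta$ has positive density on the connected space $\R^n$ minus the walls, and $f$ lies in the form domain (so it has no jumps across walls), $f$ is constant.

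\textbf{Main obstacle.} The hard step is the wall behaviour: showing that $C_c^\infty(\R^n)$, and not merely $C_c^\infty$ of the open orthants, is a core, with the correct boundary condition selected in the odd sectors, where the effective inverse-square coefficient sits in the limit-circle regime. I would first try to transfer a citable Dunkl essential self-adjointness result through the bounded perturbation above.
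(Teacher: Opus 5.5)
There is a genuine gap at the core of the lemma: you never establish essential self-adjointness on $C_c^\infty(\R^n)$, and the route you sketch for it fails as stated.

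After your full ground-state transform, the operator lives on the open orthant with limit-circle inverse-square terms. As you note, the core property is invisible there, so you propose to return to the Dunkl Laplacian and transfer its essential self-adjointness ``through the bounded perturbation''. But the perturbation is only bounded below, not bounded. It contains the drift $\langle\nabla W,\nabla f\rangle$ with a linearly growing coefficient, equivalently a potential of size $\frac14|\nabla W|^2\sim|x|^2$. Such a potential is not a Kato--Rellich-small perturbation of $-\Delta_D$.

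Moreover, the walls $\{x_j=0\}$ are unbounded. So this growth sits inside your ``near the walls'' region, and the two regimes you want to glue overlap at infinity. A partition-of-unity gluing of essential self-adjointness is itself a nontrivial local-to-global argument: it needs reflection-invariant cutoffs to keep commutators local, plus local domain regularity. You do not supply it, and the alternative via density of $\mathrm{Ran}(A+1)$ is only named.

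The missing idea is to conjugate only by the smooth part of the density and keep the singular factor as a Dunkl weight. Write $d\eta\propto e^{-B}w\,dx$ with $w=\prod_j|x_j|^{1/2}$ (multiplicity $1/4$) and $B=W+\frac14\sum_j\log(S^2+x_j^2)$. After normalization, $f\mapsto e^{-B/2}f$ is unitary from $L^2(\eta)$ to $L^2(w\,dx)$. It carries $A$ to $-\Delta_D+Q-\mathcal K$, where:
\begin{itemize}
\item $Q=\frac14|\nabla B|^2-\frac12\Delta_DB$ is continuous and satisfies $Q\ge a|x|^2-b$;
\item $\mathcal K=\sum_j\frac{I-R_j}{4(S^2+x_j^2)}$ is bounded.
\end{itemize}
A Kato-type theorem for Dunkl--Schr\"odinger operators with locally bounded, bounded-below potentials (the paper cites Amri) then gives essential self-adjointness on $C_c^\infty(\R^n)$ in one step. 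The correct wall behaviour in every parity sector is built in. So you need neither the sector decomposition nor any inverse-square or gluing analysis; one only checks by $C^2$ approximation that $e^{-B/2}C_c^\infty$ yields the same closure.

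Once that is in place, your compact-embedding route to discreteness is a legitimate alternative to the paper's comparison with the Dunkl harmonic oscillator. It uses Lyapunov tightness from $x\cdot\nabla W\ge\kappa_0|x|^2-B_0$ plus weighted Rellich for the $A_2$ weight $w$.

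Two fixes are needed in the kernel step. First, $\R^n$ minus the walls is not connected; it consists of $2^n$ orthants. You must either justify that form-domain functions lie in $W^{1,1}_{\mathrm{loc}}(\R^n)$ (for example since $w^{-1}\in L^1_{\mathrm{loc}}$), or use $\E_\eta[c_j(D_jf)^2]=0$ to match values across walls as the paper does. Second, you still need a cutoff argument showing $1\in D(A)$, so that the constants are actually contained in the kernel.
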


\begin{proof}
We first check that $A$ is well-defined for $f \in C_c^{\infty}(\R^n).$   
We first observe that Taylor's formula with an integral remainder gives for $x_j\ne0$,
\begin{align} \label{eq:taylor}
    \frac{\partial_jf(x)}{x_j}
-\frac{f(x)-f(R_jx)}{2x_j^2}
=
\frac12\int_{-1}^1(1+t)
\partial_{jj}f(x_1,\ldots,tx_j,\ldots,x_n)\,dt.
\end{align}
Since $0<c_j\le1/2$, it follows that
\begin{align*}
    \left|
-\frac{c_j(x)}{x_j}\partial_jf(x)
+\frac{c_j(x)}{2x_j^2}\bigl(f(x)-f(R_jx)\bigr)
\right|
\le\frac12\|\partial_{jj}f\|_\infty.
\end{align*}
Since $f$ is compactly supported and $W$ is locally-Lipschitz both terms $-\Delta f$ and $\langle \nabla W,\nabla f\rangle$ in $Af$, as in \eqref{eq:DunkelRegu}, are bounded, and therefore, $Af\in L^2(\eta)$.

Next, to understand the closure of $A$, we write 
$$
w(x)=\prod_j|x_j|^{1/2},
\qquad
B(x)=W(x)+\frac14\sum_j\log(S^2+x_j^2),
$$
so that $d\eta(x)=Z^{-1}e^{-B(x)}w(x)dx$, for a normalizing constant $Z>0$. Let
$$
\Delta_Dh=\Delta h+\frac12\sum_j
\left(\frac{\partial_jh}{x_j}-\frac{h-R_jh}{2x_j^2}\right)
$$
be the Dunkl Laplacian.
The map $Uf=Z^{-1/2}e^{-B/2}f$ is unitary from $L^2(\eta)$
to $L^2(w\,dx)$. Substituting
$\partial_jB=\partial_jW+\frac{x_j}{2(S^2+x_j^2)}$ gives
\begin{align*}
A=-\Delta_D+\langle\nabla B,\nabla\rangle-\mathcal K,
\quad \text{ where }\quad \mathcal K=\sum_j\frac{\mathrm{I}-R_j}{4(S^2+x_j^2)} = \sum\limits_j \frac{\frac{1}{2}-c_j(x)}{2x_j^2}\left(\mathrm{I}-R_j\right).
\end{align*}
Note that $\mathcal K$ is bounded and self-adjoint.
Now, since $B$ is unconditional and $R_jB = B$ for every $j\in [n]$, standard computations show for a test function $h$,
\begin{align*}
    e^{-B/2}\Delta_D(e^{B/2}h) = \Delta_Dh + \langle \nabla B,\nabla h\rangle + \left(\frac{1}{2}\Delta_DB + \frac{1}{4}|\nabla B|^2\right)h.
\end{align*}
Similarly,
\begin{align*}
    e^{-B/2}\langle\nabla B,\nabla(e^{B/2}h)\rangle &= \langle \nabla B, \nabla h\rangle + \frac{1}{2}|\nabla B|^2h\\
    \mathcal{K}(e^{B/2}h) &=   e^{B/2}\mathcal{K}(h).
\end{align*}
Taking $e^{B/2}h = f$, we see
\begin{align} \label{eq:UAcommute}
UAf=(-\Delta_D+Q-\mathcal K)Uf,
\quad \text{ where }\quad 
Q=\frac14|\nabla B|^2-\frac12\Delta_DB.
\end{align}
Since we assumed that $W$ is unconditional and has a bounded Hessian, the same is also true for $B$, and we have
$$
\frac{\partial_jB(x)}{x_j}
=\int_0^1\partial_{jj}B(x_1,\ldots,tx_j,\ldots,x_n)\,dt.
$$
In particular, $\Delta_DB = \Delta B + \frac{1}{2}\sum\limits_j \frac{\partial_j B}{x_j}$ is bounded and continuous.
Moreover, because of \eqref{eq:operator-assumptions},
$$
|x||\nabla B(x)|\geq \langle x,\nabla B(x)\rangle
=\langle x,\nabla W(x)\rangle
+\sum_j\frac{x_j^2}{2(S^2+x_j^2)}
\ge\kappa_0|x|^2-B_0.
$$
Thus $|\nabla B(x)|\ge\kappa_0|x|/2$ for sufficiently
large $|x|$. Recalling the definition of $Q$, we obtain
\begin{equation} \label{eq:lowerQ}
  Q(x)\ge a|x|^2-b  
\end{equation}
for some $a>0$ and $b\ge0$.
We can now see that $Q+b$ is non-negative and locally bounded, and we can apply \cite[Theorem~4.6]{amri2019dunkl}(see also \cite[Section 2.3]{hejna2021schrodinger}) to conclude that
$-\Delta_D+Q$ is essentially self-adjoint on $C_c^\infty(\R^n)$.
The same is true for $-\Delta_D+Q-\mathcal K$, since
$\mathcal K$ is bounded and self-adjoint.
We denote its closure by $H$ and the domain by $D(H) \subset L^2(wdx)$.

We shall use $H$ to identify the closure of $A$. Repeating the computation from \eqref{eq:taylor},
$$
\frac{\partial_jh(x)}{x_j}
-\frac{h(x)-h(R_jx)}{2x_j^2}
=\frac{1}{2}\int_{-1}^1(1+t)
\partial_{jj}h(x_1,\ldots,tx_j,\ldots,x_n)\,dt.
$$
Consequently, if $h_m\to h$ in $C^2$ and all these functions
are supported in a common compact set, then
$$
(-\Delta_D+Q-\mathcal K)h_m
\longrightarrow
(-\Delta_D+Q-\mathcal K)h
\quad\text{in }L^2(w\,dx).
$$
Approximating by smooth functions and using the closedness of $H$,
we see that $C_c^2(\R^n)\subset D(H)$.
Now, for $\psi\in C_c^\infty(\R^n)$, take $f_m\in C_c^\infty(\R^n)$
converging to $U^{-1}\psi$ in $C^2(\R^n)$, with supports in a common compact set. The above observation gives
$$
Uf_m\longrightarrow\psi,
\qquad HUf_m\longrightarrow H\psi
\quad\text{in }L^2(w\,dx).
$$
Since every element of $D(H)$ can be approximated, together
with its image under $H$, by functions in $C_c^\infty(\R^n)$,
the same is true using functions in $U\left(C_c^\infty(\R^n)\right)$.
The identity $UAf=HUf$ from \eqref{eq:UAcommute} therefore shows that the closure of
$A$ is $U^{-1}HU$. We again denote this closure by $A$ and the domain $D(A)\subset L^2(\eta)$.
Furthermore, Lemma~\ref{lem:diriform} gives
$$
\E_{wdx}\left[HUf\cdot Uf\right] 
=\E_\eta\left[ Af\cdot f\right]
=\cE(f)\ge0,
$$
for $f\in C_c^\infty(\R^n)$. Passing to the limit proves that
$H$ and $A$ are non-negative.

To show that the spectrum is discrete, let $(e_m)$ be an
orthonormal eigenbasis of $-\Delta_D+a|x|^2$, with eigenvalues
$\mu_m\to\infty$, arranged in increasing order.
Such a basis exists by \cite[Corollary~3.5(ii)]{rosler1998hermite}, with an appropriate change of variables. From here, we will slightly abuse notation and use $\langle\cdot,\cdot\rangle$ for the inner product in $L^2(wdx)$. For $u\in C_c^\infty(\R^n)$, the bound in \eqref{eq:lowerQ} gives,
$$
\sum_m\mu_m|\langle u,e_m\rangle|^2
=\langle(-\Delta_D+a|x|^2)u,u\rangle
\le\langle Hu,u\rangle+C\|u\|_2^2,
\qquad C=b+\|\mathcal K\|.
$$
An approximation argument, as above, gives the same result on $D(H)$. Take now $u=(H+\mathrm{I})^{-1}v$ for some function $v$ with
$\langle v,v\rangle\leq1$.
Since $H$ is non-negative,
$$
\langle u,u\rangle
\le \langle Hu,u\rangle+\langle u,u\rangle
=\langle v,u\rangle
\leq \sqrt{\langle u,u\rangle}.
$$
Thus $\langle u,u\rangle\leq1$ and $\langle Hu,u\rangle\leq1$.
As the eigenvalues $\mu_m$ are arranged in increasing order,
$$
\mu_{N+1}\sum_{m>N}|\langle u,e_m\rangle|^2
\le \sum_{m>N}\mu_m|\langle u,e_m\rangle|^2
\le 1+C.
$$
If $P_N$ is the orthogonal projection onto
$\operatorname{span}\{e_1,\ldots,e_N\}$, we can equivalently state
$$
\|(H+I)^{-1}-P_N(H+I)^{-1}\|_{\mathrm{op}}
\le \sqrt{\frac{1+C}{\mu_{N+1}}}
\longrightarrow0.
$$
Hence $(H+I)^{-1}$ is a uniform limit of finite-rank
operators and is therefore compact \cite[Theorem 4.18]{rudin1991functional}. The spectral theorem for compact operators now implies that $H$ has a discrete spectrum, and the same is also true for $A$ since it is unitarily equivalent to $H$.

Finally, to understand the kernel of $A$, take $B_1 \subseteq B_2$ two centered Euclidean balls and take $\chi$ to a smooth radially symmetric function which is $1$
on $B_1$ and $0$ outside $B_2$, and for $R>0$ set $\chi_R(x)=\chi(x/R)$.
Since $|\nabla W(x)|\le K|x|$, the definition of $A$ gives
\begin{equation} \label{eq:smoothcut}
    \|\chi_R-1\|_{L^2(\eta)}\longrightarrow0,
\qquad
\|A\chi_R\|_{L^2(\eta)}^2
\le C\cdot\eta(RB_{2}\setminus RB_1)\leq C\cdot\eta(\R^n\setminus RB_1) \longrightarrow0.
\end{equation}

Hence $1\in D(A)$ and it is immediate that $A1=0$.
Conversely, if $f\in\ker(A)$, take $f_m\in C_c^\infty(\R^n)$
such that $f_m\to f$ and $Af_m\to0$ in $L^2(\eta)$.
By Lemma \ref{lem:diriform},
$$
\E_\eta\left[|\nabla f_m|^2\right]
+\sum_j\E_\eta\big[c_j(D_jf_m)^2\big]
=\cE(f_m) = \E_\eta[f_m\cdot Af_m]\longrightarrow0.
$$
If $K$ is a compact subset of an orthant in $\R^n$, then the density of $\eta$ is bounded from below on $K$. Passing to the limit gives $\nabla f\equiv 0$ on $K$. Since we can take a compact exhaustion of each orthant, we also get that $f$ is constant on each orthant. 

It remains to show that $f$ is constant everywhere. For every coordinate $j$, and $K$ as above, we have
$$
\int_K|f_m-R_jf_m|^2d\eta
\le
\left(\sup_{x\in K}\frac{4x_j^2}{c_j(x)}\right)
\E_\eta[c_j(D_jf_m)^2]
\longrightarrow0.
$$
Since $\eta$ is unconditional, $R_jf_m\to R_jf$ in
$L^2(\eta)$. Passing to the limit gives
$$
\int_K|f-R_jf|^2\,d\eta=0.
$$
Hence $f\mid_{K} \equiv f\mid_{R_jK}$, which implies that $f$ is constant on the union of orthants containing $K \cup R_jK$. Since $j$ is arbitrary we conclude that $f$ is constant everywhere.
\end{proof}
\subsection{Functions in the domain}
Next we will establish that the functions we work with are indeed in the domain of $A$ and that our formulas extend to these functions.
\begin{lemma}\label{lem:domains}
Under the assumptions of Lemma~\ref{lem:closure}, the following hold:
\begin{enumerate}
\item Let $f$ be a locally-Lipschitz function satisfying
$$
\E_\eta[f^2+|\nabla f|^2]<\infty.
$$
Then $f\in D(A^{1/2})$, and
$$\cE(f)
\le 9\E_\eta[|\nabla f|^2].
$$
\item If in addition, $f\in C^2(\R^n)$ and
$f,\nabla f,\nabla^2f$ have polynomial growth, then
$f\in D(A)$, and $Af$ is given by \eqref{eq:DunkelRegu}.
\end{enumerate}
\end{lemma}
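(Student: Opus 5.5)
The plan is a cutoff-and-mollify approximation, passing to the limit in the closed form (item (1)) and in the closed operator (item (2)).

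\textbf{Item (1).} Let $f$ be locally Lipschitz with $\E_\eta[f^2+|\nabla f|^2]<\infty$. First reduce to compact support. Take the cutoffs $\chi_R$ from the proof of Lemma~\ref{lem:closure} and set $f_R=\chi_R f$. Then $\nabla f_R=\chi_R\nabla f+f\nabla\chi_R$ with $|\nabla\chi_R|\le C/R$. Dominated convergence gives $f_R\to f$ and $\nabla f_R\to\nabla f$ in $L^2(\eta)$.

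Next mollify each $f_R$ by convolution with a standard unconditional mollifier $\rho_\delta$, producing $f_{R,\delta}\in C_c^\infty(\R^n)$. The density of $\eta$ is bounded above on compact sets, since $q\le1$ and $W$ is continuous. Hence $f_{R,\delta}\to f_R$ and $\nabla f_{R,\delta}\to\nabla f_R$ in $L^2(\eta)$ as $\delta\to0$.

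For $C_c^\infty$ functions, Lemma~\ref{lem:diriforms} gives $\cE(u)\le 9\E_\eta[|\nabla u|^2]$. Applying this to differences $u-v$ shows that the approximating sequence is Cauchy in the form norm $(\E_\eta[u^2]+\cE(u))^{1/2}$. The closed form associated with $A$ is the closure of $\cE$ on $C_c^\infty$, with form domain $D(A^{1/2})$. Therefore $f\in D(A^{1/2})$.

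To identify the limiting form value, note that a subsequence converges $\eta$-a.e. Then $D_jf_m\to D_jf$ a.e. off $\{x_j=0\}$, and by Fatou (or by $L^2$ convergence of $\sqrt{c_j}D_jf_m$) the limit is $\E_\eta[|\nabla f|^2]+\sum_j\E_\eta[c_j(D_jf)^2]$. The inequality $\cE(f)\le9\E_\eta[|\nabla f|^2]$ then passes to the limit.

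\textbf{Item (2).} Now assume $f\in C^2$ with polynomial growth of $f,\nabla f,\nabla^2 f$. Using closedness of $A$ (Lemma~\ref{lem:closure}), it suffices to show $f_R=\chi_R f\to f$ and $Af_R\to F$ in $L^2(\eta)$, where $F$ denotes the right-hand side of \eqref{eq:DunkelRegu}.

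The first step is that $f_R\in D(A)$ with $Af_R$ given pointwise by \eqref{eq:DunkelRegu}. This holds because $f_R\in C_c^2\subset D(A)$: approximate $f_R$ in $C^2$ by $C_c^\infty$ functions with supports in a common compact set, and use the Taylor bound \eqref{eq:taylor}, exactly as in the proof of Lemma~\ref{lem:closure}.

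Then expand
\[Af_R=\chi_R Af+f A\chi_R-2\langle\nabla\chi_R,\nabla f\rangle+\text{(reflection cross terms)}.\]
Since $\chi_R$ is unconditional, the cross terms coming from the Dunkl part reduce to $\frac{c_j}{2x_j^2}(f-R_jf)\cdot(\chi_R-R_j\chi_R)=0$. The remaining extra terms are supported in the annulus $R(B_2\setminus B_1)$. There they are bounded by a polynomial in $|x|$ times $1/R$ or $\|A\chi_R\|_\infty$, which is finite because $|\nabla W|\le K|x|$ and $|\nabla\chi_R|\lesssim 1/R$.

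The main point is that $F\in L^2(\eta)$. The terms $\Delta f$ and $\langle\nabla W,\nabla f\rangle$ are polynomially bounded. For the Dunkl term, \eqref{eq:taylor} gives
\[\Bigl|\frac{c_j}{x_j}\partial_jf-\frac{c_j}{2x_j^2}(f-R_jf)\Bigr|\le\frac12\sup_{|t|\le1}|\partial_{jj}f(\dots,tx_j,\dots)|,\]
which is again polynomially bounded. Because $\eta$ has Gaussian tails by \eqref{eq:subgaus}, all polynomials lie in $L^2(\eta)$. Dominated convergence then yields $Af_R\to F$ in $L^2(\eta)$, and closedness gives $f\in D(A)$ with $Af=F$.

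The only delicate step is making sure the reflection terms in $A(\chi_Rf)$ produce no uncontrolled singular cross term near $\{x_j=0\}$. This is exactly why the cutoff must be unconditional (radial), and I would check that algebra first.
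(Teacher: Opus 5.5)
Your proposal is correct and follows essentially the paper's route. For item (1), both use cutoff plus mollification, Lemma~\ref{lem:diriforms} applied to differences to get a Cauchy sequence, and closedness of $A^{1/2}$ (equivalently, of the form, since $C_c^\infty$ is a core). For item (2), both use $C_c^2\subset D(A)$, the product formula $A(\chi_Rf)=\chi_RAf+fA\chi_R-2\langle\nabla\chi_R,\nabla f\rangle$ for the radial cutoff, the Taylor bound, Gaussian tails, dominated convergence and closedness of $A$; your cross term $(f-R_jf)(\chi_R-R_j\chi_R)$ is indeed what the Dunkl product rule produces, and it vanishes as you say.
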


\begin{proof}
For $R > 0$, let $\chi_R$ be a smooth cutoff, as in \eqref{eq:smoothcut}. If $\rho$ is some compactly supported smooth mollifier and $\rho_m(x) = m^n\rho(x\cdot m)$ we consider the mollified function $f_m = \chi_m f *\rho_m$. Then $f_m \in C_c^\infty(\R^n)$ and
$$
\E_\eta\left[|f_m-f|^2\right]+\E_\eta\left[|\nabla f_m-\nabla f|^2\right]
\longrightarrow0,
$$
By Lemmas~\ref{lem:diriform} and~\ref{lem:diriforms}
$$
\E_\eta\left[|A^{1/2}(f_m-f_\ell)|^2\right]
=\cE(f_m-f_\ell)
\leq9\E_\eta\left[|\nabla f_m-\nabla f_\ell|^2\right].
$$
Thus, $A^{1/2}f_m$ is a Cauchy sequence and $A^{1/2}f_m$ converges to some $g\in L^2(\eta)$.
Since $f_m\to f$ in $L^2(\eta)$ and $A^{1/2}$ is closed,
we obtain $f\in D(A^{1/2})$ and $A^{1/2}f=g$, i.e. $f\in D(A^{1/2})$ and $$\E_\eta[|A^{1/2}(f_m-f)|^2]\to 0.$$ 
To extend Lemma \ref{lem:diriforms} to $f$, we also observe that for every $j\in[n]$,
$$\E_\eta \left[c_j(D_jf_m-D_jf_\ell)^2\right] \leq \cE(f_m-f_\ell).$$
So, $\sqrt{c_j}D_jf_m$ is also Cauchy. Since $f_m\to f$ in $L^2(\eta)$ and $\eta$ is unconditional,
$R_jf_m\to R_jf$ in $L^2(\eta)$ as well.
Passing to a subsequence gives $D_jf_m\to D_jf$,
$\eta$-almost everywhere.
Thus the $L^2(\eta)$ limit of the Cauchy sequence
$\sqrt{c_j}D_jf_m$ is $\sqrt{c_j}D_jf$. Combining all of the above we get
$$\cE(f) = \E_\eta\left[|A^{1/2}f|^2\right] =  \lim\limits_{m\to \infty}\E_\eta\left[|A^{1/2}f_m|^2\right] \leq 9\lim\limits_{m\to \infty}\E_\eta[|\nabla f_m|^2] = 9\E_\eta[|\nabla f|^2].$$

For the second claim, we first note in passing that the arguments in the proof of Lemma \ref{lem:closure} show that $C_c^2(\R^n) \subset D(A)$. Now, suppose that $f\in C^2(\R^n)$
and that $f,\nabla f,\nabla^2f$ have polynomial growth.
Let us treat for now $Af$ according to the definition in \eqref{eq:DunkelRegu}, and show that $Af \in L^2(\eta)$. Then we will show that for an appropriate sequence in $f_m \in C_c^2(\R^n)$, $Af_m$ converges to $Af$ in $L^2(\eta)$, which will conclude the proof. 

Recalling first that $\eta$ has sub-Gaussian tails, as in \eqref{eq:subgaus}, it has moments of every order, and so $f,\nabla f,\nabla^2 f$ are square-integrable. The Taylor formula in \eqref{eq:taylor}
$$
\frac{\partial_jf(x)}{x_j}
-\frac{f(x)-f(R_jx)}{2x_j^2}
=
\frac12\int_{-1}^1(1+t)
\partial_{jj}f(x_1,\ldots,tx_j,\ldots,x_n)\,dt
$$
and the bound $|\nabla W(x)|\le K|x|$ from \eqref{eq:nablaWbound} show that $Af$
also has polynomial growth. Consequently,
$\ Af\in L^2(\eta)
$ as well.

To show the necessary approximation, let $\chi_R$ be as above. Since $\chi_R$ is radial, $R_j\chi_R=\chi_R$, hence
$$
\chi_Rf-R_j(\chi_Rf)=\chi_R(f-R_jf).
$$
Substituting into the definition of $A$ and expanding
the derivatives gives
\begin{align*}
A(\chi_Rf)
={}&\chi_RAf
-2\langle\nabla\chi_R,\nabla f\rangle +f\left(
-\Delta\chi_R+\langle\nabla W,\nabla\chi_R\rangle
-\sum_j\frac{c_j}{x_j}\partial_j\chi_R
\right).
\end{align*}
The last term is precisely $fA\chi_R$, since
$R_j\chi_R=\chi_R$. Therefore,
$$
A(\chi_Rf)
=\chi_RAf+fA\chi_R
-2\langle\nabla\chi_R,\nabla f\rangle.
$$
Moreover the radial symmetry also gives $\left|\frac{\partial_j\chi_R}{x_j}\right|\le \frac{C}{R^2}$, for some constant $C>0$.
Together with $|\nabla W(x)|\le K|x|$, this yields, for $R\ge1$,
$$
|A\chi_R|\le C\mathbf1_{RB_2\setminus RB_1},
\qquad
|\nabla\chi_R|\le\frac CR.
$$
Therefore,
\begin{align*}
\E_\eta\left[|A(\chi_Rf)-Af|^2\right]
&\leq 8\left(\E_\eta\left[|(\chi_R-1)Af|^2\right]
+C^2\E_\eta\left[\mathbf1_{\R^n\setminus RB_1}f^2\right]
+\frac {C^2}{R^2}\E_\eta\left[|\nabla f|^2\right]\right)
&\xrightarrow{R\to \infty}0.
\end{align*}
The first two terms vanish by dominated convergence.
Since $\chi_Rf\in C_c^2(\R^n)\subset D(A)$ and
$\chi_Rf\to f$ in $L^2(\eta)$, this proves that
$f\in D(A)$, with $Af$ given by \eqref{eq:DunkelRegu}.
\end{proof}
In light of Lemma \ref{lem:domains} and its proof, we emphasize the fact that the same approximation argument shows that if $f\in D(A^{1/2})$ then,
$$
\E_\eta[|A^{1/2}f|^2] = \cE(f)
=\E_\eta[|\nabla f|^2]
+\sum_j\E_\eta[c_j(D_jf)^2],
$$
and $f$ is weakly differentiable.
Thus the first identity of Lemma \ref{lem:diriform} holds for
$f\in D(A)$ and $g\in D(A^{1/2})$. %Lemma \ref{lem:closure} shows this is also true when $f\in D(A)$.

We next prove that certain operations preserve the domain.
\begin{lemma}\label{lem:form-signs}
Let $f \in D(A^{1/2})$. Then, if $R_if=-f$ for some $i\in[n]$, 
$$\operatorname{sgn}(x_i)f\in D(A^{1/2}) \quad \text{ and } \quad \operatorname{sgn}(x_i)|f| \in D(A^{1/2}).$$
Furthermore
\begin{align*}
\cE(\operatorname{sgn}(x_i)f)
&=\cE(f)-\E_\eta[c_i(D_if)^2],\\
\cE(\operatorname{sgn}(x_i)|f|)
&\le\cE(f).
\end{align*}
\end{lemma}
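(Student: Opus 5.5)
The plan is to approximate $f$ by nice functions, compute the form for the modified functions at the approximate level, and pass to the limit. This is the analytic counterpart of Steps 2 and 4 in the proof of Proposition~\ref{prop:keyestimate}.

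\textbf{Approximation.} By the remark after Lemma~\ref{lem:domains}, $f\in D(A^{1/2})$ is weakly differentiable, and
$\cE(f)=\E_\eta[|\nabla f|^2]+\sum_j\E_\eta[c_j(D_jf)^2]$.
Take $f_m\in C_c^\infty(\R^n)$ with $f_m\to f$ in the form norm $\big(\E_\eta[f^2]+\cE(f)\big)^{1/2}$. Replace $f_m$ by $P_i^-f_m=\frac{f_m-R_if_m}{2}$. This map is a contraction for $\cE$, since $R_i$ commutes with $\nabla$ up to sign, with each $D_j$, and preserves $\eta$. So we may assume $R_if_m=-f_m$. Then $f_m$ vanishes on $\{x_i=0\}$ and $f_m/x_i$ is smooth.

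\textbf{Gradient and difference terms.} The function $g_m=\sgn(x_i)f_m=|x_i|\cdot(f_m/x_i)$ is Lipschitz. Its weak gradient is $\sgn(x_i)\nabla f_m$, with no singular part on $\{x_i=0\}$ because $f_m$ vanishes there. As in Step 2:
\begin{itemize}
\item $D_ig_m=0$;
\item $D_kg_m=\sgn(x_i)D_kf_m$ for $k\ne i$.
\end{itemize}
Similarly $\tilde g_m=\sgn(x_i)|f_m|$ is locally Lipschitz and vanishes on $\{x_i=0\}$. Its weak gradient is $\sgn(x_i)\sgn(f_m)\nabla f_m$ a.e., so $|\nabla\tilde g_m|=|\nabla f_m|$. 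For the difference terms:
\begin{itemize}
\item $D_i\tilde g_m=0$;
\item for $k\neq i$, $|D_k\tilde g_m|=\frac{\big||f_m|-|R_kf_m|\big|}{2|x_k|}\le|D_kf_m|$ by the reverse triangle inequality.
\end{itemize}
Both functions satisfy $\E_\eta[g^2+|\nabla g|^2]<\infty$, so Lemma~\ref{lem:domains}(1) puts them in $D(A^{1/2})$ with $\cE$ given by the explicit formula. This yields, for each $m$,
$$\cE(\sgn(x_i)f_m)=\cE(f_m)-\E_\eta[c_i(D_if_m)^2],\qquad \cE(\sgn(x_i)|f_m|)\le\cE(f_m).$$

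\textbf{Passing to the limit (main obstacle).} The maps $f\mapsto\sgn(x_i)f$ and $f\mapsto\sgn(x_i)|f|$ are continuous in $L^2(\eta)$. The first is also linear, and the identity above applied to $f_m-f_\ell$ shows $\cE(\sgn(x_i)(f_m-f_\ell))\le\cE(f_m-f_\ell)$. Hence $\sgn(x_i)f_m$ is Cauchy in the form norm. Closedness of the form gives $\sgn(x_i)f\in D(A^{1/2})$, with the identity passing to the limit since $\sqrt{c_i}D_if_m\to\sqrt{c_i}D_if$ as in Lemma~\ref{lem:domains}.

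The second map is nonlinear, so I would instead use lower semicontinuity of the closed form. We have $\tilde g_m\to\sgn(x_i)|f|$ in $L^2(\eta)$ and $\sup_m\cE(\tilde g_m)\le\sup_m\cE(f_m)<\infty$. Weak compactness in the Hilbert space $D(A^{1/2})$ (Banach--Saks or weak limits) then gives $\sgn(x_i)|f|\in D(A^{1/2})$ and
$$\cE(\sgn(x_i)|f|)\le\liminf_m\cE(\tilde g_m)\le\lim_m\cE(f_m)=\cE(f).$$

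The delicate point is justifying that the weak gradients of $g_m$ and $\tilde g_m$ carry no jump on $\{x_i=0\}$. This is exactly where oddness in $x_i$ is used.
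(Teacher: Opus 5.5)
Your proposal is essentially the paper's proof. Both arguments use odd $C_c^\infty$ approximants obtained via $(I-R_i)/2$ and compute $\cE$ explicitly at the smooth level. For the linear map $f\mapsto\sgn(x_i)f$ both use the Cauchy-plus-closedness argument. For $\sgn(x_i)|f|$ both use lower semicontinuity; the paper phrases it as the duality bound $|\E_\eta[vA^{1/2}\psi]|\le\sqrt{\cE(f)}\,\|\psi\|_{L^2(\eta)}$, giving $v\in D((A^{1/2})^*)=D(A^{1/2})$, which is equivalent to your weak-compactness step.

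There is one slip to fix. $\tilde g_m=\sgn(x_i)|f_m|$ is still odd in $x_i$, because $|f_m|$ is even. Hence $D_i\tilde g_m=|f_m|/|x_i|$ and $|D_i\tilde g_m|=|D_if_m|$, not $0$. With this correction, $\cE(\tilde g_m)\le\cE(f_m)$ still follows.

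By contrast, $D_i\tilde g_m=0$ would give $\cE(\tilde g_m)\le\cE(f_m)-\E_\eta[c_i(D_if_m)^2]$, which is false in general. Applied to the eigenfunction of Proposition~\ref{prop:keyestimate}, it would produce a mean-zero, unit-norm function with energy strictly below $\lambda$.
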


\begin{proof}
Fix $i \in [n]$ and suppose that $R_if=-f$.
Since $AR_i=R_iA$ holds on $C_c^\infty$ it extends
to $D(A)$ by approximation.
Thus $R_i$ commutes
with $A^{1/2}$ as well. Choose $f_m\in C_c^\infty(\R^n)$ with
$$
f_m\longrightarrow f,\qquad
A^{1/2}f_m\longrightarrow A^{1/2}f
\quad\text{in }L^2(\eta).
$$
Replacing $f_m$ by $(f_m-R_if_m)/2$ preserves the limits and ensures that $R_if_m=-f_m$.  Write now
$$
h_m=\operatorname{sgn}(x_i)f_m,
\qquad
v_m=\operatorname{sgn}(x_i)|f_m|
$$
which are compactly supported and locally Lipschitz, so
Lemma \ref{lem:domains} applies. 

We first handle the sequence $h_m$, for which we have, almost everywhere,
$$
\nabla h_m=\operatorname{sgn}(x_i)\nabla f_m,\qquad
D_ih_m=0,\qquad
D_jh_m=\operatorname{sgn}(x_i)D_jf_m \text{ when }j\neq i.
$$
Consequently,
$$
\cE(h_m)=\cE(f_m)-\E_\eta[c_i(D_if_m)^2].
$$
Applying the same calculation to $f_m-f_\ell$ gives
$$
\|A^{1/2}(h_m-h_\ell)\|_{L^2(\eta)}^2
\le\cE(f_m-f_\ell)\longrightarrow0.
$$
Since $h_m\to h:=\operatorname{sgn}(x_i)f$ in $L^2(\eta)$,
we get $h\in D(A^{1/2})$ and
$A^{1/2}h_m\to A^{1/2}h$ in $L^2(\eta)$.
Similarly, $\sqrt{c_i}D_if_m\to\sqrt{c_i}D_if$ in $L^2(\eta)$.
Passing to the limit now gives
$$
\cE(h)=\cE(f)-\E_\eta[c_i(D_if)^2].
$$
Repeating the arguments for the sequence $v_m$, we have almost everywhere,
$$
|\nabla v_m|\le|\nabla f_m|,\qquad
|D_iv_m|=|D_if_m|,\qquad
|D_jv_m|\le|D_jf_m|\text{ when } j\ne i.
$$
Thus $\cE(v_m)\le\cE(f_m)$, and
$v_m\to v:=\operatorname{sgn}(x_i)|f|$ in $L^2(\eta)$. For every $\psi\in D(A^{1/2})$, the convergence
$v_m\to v$ in $L^2(\eta)$ gives
\begin{align*}
\left|\E_\eta[vA^{1/2}\psi]\right|
=\lim_{m\to\infty}
\left|\E_\eta[(A^{1/2}v_m)\psi]\right|\leq\lim_{m\to\infty}\sqrt{\cE(f_m)\E_\eta\left[\psi^2\right]}
=\sqrt{\cE(f)\E_\eta\left[\psi^2\right]}.
\end{align*}
This proves that
$v\in D((A^{1/2})^*)$,
and since $A^{1/2}$ is self-adjoint, we obtain
$v\in D(A^{1/2})$. The same bound also gives
$$
\cE(v)\leq\cE(f).
$$
\end{proof}
Finally, we deal with functions in $D(A)$ which have some symmetries.
\begin{lemma}\label{lem:derivative-domains}
Let $f\in D(A)$ be either even in every coordinate, or
odd in one coordinate $x_i$ and even in all the others.
Then:
\begin{enumerate}
    \item For every $j\in [n]$, $
\partial_jf\in D(A^{1/2})
$ and the corresponding identity in Lemma \ref{lem:identities} holds.
\item In the second case, $g=\frac{f}{x_i}$ is weakly differentiable in each open orthant, $
\E_\eta[c_i(g^2+|\nabla g|^2)]<\infty,$
and the inequality of Lemma \ref{lem:weighted} holds for $g$.
\end{enumerate}
\end{lemma}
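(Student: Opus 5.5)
Write $H$ for the self-adjoint operator $-\Delta_D+Q-\mathcal K$ on $L^2(w\,dx)$, $U$ for the unitary from the proof of Lemma~\ref{lem:closure}, and use the same letter $H$ for the domain $D(H)$ only through the notation $D(H)$.

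The plan is to prove both items by approximation from $C_c^\infty$. Since $f\in D(A)$ and $A$ is the closure on $C_c^\infty(\R^n)$, I pick $f_m\in C_c^\infty(\R^n)$ with $f_m\to f$ and $Af_m\to Af$ in $L^2(\eta)$. I then symmetrize $f_m$ with the commuting projections $P_\varepsilon$ from Step~1 of Proposition~\ref{prop:keyestimate}, choosing $\varepsilon$ to match the parity of $f$. These projections commute with $A$ and are $L^2(\eta)$-contractions, so the limits are preserved and every $f_m$ has the same parity pattern as $f$.

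For such smooth $f_m$, the identities of Lemma~\ref{lem:identities} are purely algebraic consequences of the commutation relations and Lemma~\ref{lem:diriform}. They therefore hold for $f_m$ (and, in the odd case, for $g_m=f_m/x_i$, which is smooth because $f_m$ is odd in $x_i$), provided each term is finite. The key observation is that every term on the right of \eqref{eq:evenidentity} and \eqref{eq:oddidentity} is non-negative. This uses $\nabla^2W\succeq0$, $-c_j'/x_j\ge0$, $2c_i-x_ic_i'\ge0$ and $W_i/x_i\ge0$ from \eqref{eq:nablaWbound}, together with $\cE(\cdot)\ge0$.

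Applying the identity to $f_m-f_\ell$ (which has the same parity) gives
$$\sum_j\cE(\partial_jf_m-\partial_jf_\ell)+\E_\eta[c_i|\nabla(g_m-g_\ell)|^2]+\cdots\le\|A(f_m-f_\ell)\|_{L^2(\eta)}^2\to0 .$$
This already needs $\E_\eta[(\partial_jf_m-\partial_jf_\ell)^2]\to 0$ to control the $L^2$ part, which I obtain from $\E_\eta[|\nabla h|^2]\le\cE(h)=\E_\eta[hAh]\le\|h\|\|Ah\|$. Hence $\partial_jf_m$ is Cauchy in the graph norm of $A^{1/2}$, i.e.\ in $\|\cdot\|_2^2+\cE(\cdot)$. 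It converges to $\partial_jf$ (the weak derivative, by uniqueness of distributional limits), so $\partial_jf\in D(A^{1/2})$ by closedness. Each term of the identity then converges: quadratic forms with non-negative weights are continuous along Cauchy sequences, and Fatou/lower semicontinuity handles the terms only known to be Cauchy. This proves item~(1).

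For item~(2), the same Cauchy estimate gives $\sqrt{c_i}\nabla g_m$ Cauchy in $L^2(\eta)$. The $L^2$ part $\E_\eta[c_ig_m^2]$ is handled by $\E_\eta[c_i(D_if_m)^2]\le\cE(f_m)$, since $D_if_m=g_m$. Thus $g_m\to g$ with $\E_\eta[c_i(g^2+|\nabla g|^2)]<\infty$, and $g$ is weakly differentiable in each open orthant (locally $g=f/x_i$ away from $\{x_i=0\}$). Lemma~\ref{lem:weighted} holds for each unconditional $g_m\in C_c^\infty$: it needs only $g_m$ a test function, and $h$ built from $g_m$ lies in $D(A^{1/2})$ by Lemma~\ref{lem:domains}. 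Both sides are continuous in the norm $(\E[c_iv^2]+\E[c_i|\nabla v|^2])^{1/2}$, and $\E_\eta[c_ig_m]\to\E_\eta[c_ig]$, so the inequality passes to the limit.

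The main obstacle is justifying the Bochner identities for the smooth approximants. Differentiating $Af_m$ requires $W\in C^3$, but we only have $W\in C^2$, and the term $\langle\nabla W_j,\nabla f\rangle$ must be read weakly. I would handle this by proving the identities in integrated form, pairing against $\partial_jf_m$ and integrating by parts once, so only $\nabla^2W$ (bounded) appears. Alternatively I would mollify $W$ unconditionally and pass to the limit using the uniform bounds $0\preceq\nabla^2W\preceq KI$.
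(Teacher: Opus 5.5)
Your proposal is correct and follows essentially the same route as the paper: a parity-matched graph-norm approximation from $C_c^\infty$, then the nonnegativity of every term of the Bochner identities applied to $f_m-f_\ell$ to make $\partial_jf_m$ Cauchy in the form norm and $\sqrt{c_i}\nabla g_m$ Cauchy in $L^2(\eta)$, then closedness of $A^{1/2}$ and a limit passage in the identities and in Lemma~\ref{lem:weighted}; where you use Fatou, the paper uses the explicit weight bounds $0\le -c_j'/x_j\le 1/S^2$, $0\le 2c_i-x_ic_i'\le 4c_i$, $0\le W_i/x_i\le K$, and either works. Your closing worry is unnecessary: $\partial_j(Af_m)$ involves only $\nabla W_j$, i.e.\ second derivatives of $W$, so $W\in C^2$ suffices, and since $Af_m\in C^1_c\subset D(A^{1/2})$ by Lemma~\ref{lem:domains}, Lemma~\ref{lem:diriform} applies directly to the pair $(f_m,Af_m)$ with no extra integration by parts and no mollification of $W$.
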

\begin{proof}
As before, we choose an approximating sequence $f_m
\in C_c^\infty(\R^n)$ such that
$$
\E_\eta\left[|f_m-f|^2\right]+\E_\eta\left[|Af_m-Af|^2\right]\longrightarrow0.
$$
As in Lemma \ref{lem:form-signs} since the reflections $R_j$ commute with $A$, we can assume $f_m$ has the same symmetries as $f$, and moreover,
$$
\cE(f_m-f) 
\leq\sqrt{\E_\eta\left[|f_m-f|^2\right]\E_\eta\left[|Af_m-Af|^2\right]}
\longrightarrow0.
$$
In particular, since $\E_\eta[(\partial_jf_m - \partial_jf)^2]\leq \cE(f_m-f)$, we get $\partial_jf_m\to\partial_jf$ in $L^2(\eta)$.

Every term on the right-hand side of either identity in
Lemma \ref{lem:identities} is non-negative. Applying Lemma \ref{lem:identities} to $f_m-f_\ell$ therefore gives
$$
\cE(\partial_jf_m - \partial_jf_\ell)=\E_\eta\left[|A^{1/2}(\partial_jf_m-\partial_jf_\ell)|^2\right]
\leq \E_\eta\left[|A(f_m-f_\ell)|^2\right]
\longrightarrow0.
$$
Closedness of $A^{1/2}$ now gives
$$
\partial_jf\in D(A^{1/2}) \quad \text{ and }\quad
A^{1/2}\partial_jf_m\longrightarrow A^{1/2}\partial_jf
\quad\text{in }L^2(\eta).
$$
This concludes the first part. For the second part, put $g_m=\frac{f_m}{x_i}$, with derivatives
taken on the open orthants. Since $g_m=D_if_m$ and $g=D_if$, we have
$$
\E_\eta[c_i(g_m-g)^2]
\leq\cE(f_m-f)\longrightarrow0.
$$
Similar to the argument above, applying the second identity of Lemma \ref{lem:identities} to
$f_m-f_\ell$ gives
$$
\E_\eta[c_i|\nabla g_m-\nabla g_\ell|^2]
\leq\E_\eta\left[|A(f_m-f_\ell)|^2\right]
\longrightarrow0.
$$
Thus $\sqrt{c_i}\nabla g_m$ has a limit in $L^2(\eta)$. In particular if $K$ is a compact set contained in the interior of an orthant, $c_i$ is bounded from below on $K$ and so $\nabla g_m\mid_K \longrightarrow \nabla g\mid_K$ in $L^2(\eta)$. Exhausting each open orthant, we identify the $L^2(\eta)$
limit of $\sqrt{c_i}\nabla g_m$ with $\sqrt{c_i}\nabla g$,
$\eta$-almost everywhere. Hence
$$
\E_\eta\left[c_i|\nabla g_m-\nabla g|^2\right]
\longrightarrow0.
$$
This establishes the claim about the integrability of $g$ and its
gradient.

To pass to the limit in Lemma \ref{lem:identities}, recall that
$\nabla^2W$ is bounded and
$$
0\le-\frac{c_j'}{x_j}\leq\frac1{S^2},
\qquad
0\leq 2c_i-x_ic_i'\leq4c_i,
\qquad
0\le\frac{\partial_iW}{x_i}\le K.
$$
Together with the convergences above, these bounds show
that every term in the corresponding identity converges.
This proves that identity for $f$. Finally, in the odd case each $g_m$ is smooth, compactly
supported and unconditional. Applying Lemma~\ref{lem:weighted} to $g_m$
and passing to the limit gives
$$
\left(\lambda-\frac{3}{2S^2}\right)
\left(
\E_\eta[c_i g^2]
-\frac{(\E_\eta[c_i g])^2}{\E_\eta[c_i]}
\right)
\le\E_\eta[c_i|\nabla g|^2].
$$
\end{proof}

\bibliographystyle{alpha}
\bibliography{bib}

\end{document}